\setlist[itemize]{noitemsep, topsep=1 pt}
\newcommand\bcdot{\ensuremath{
  \mathchoice
   {\mskip\thinmuskip\lower0.2ex\hbox{\scalebox{1.6}{$\cdot$}}\mskip\thinmuskip}}
   {\mskip\thinmuskip\lower0.2ex\hbox{\scalebox{1.6}{$\cdot$}}\mskip\thinmuskip}
   {\lower0.3ex\hbox{\scalebox{1.2}{$\cdot$}}}
   {\lower0.3ex\hbox{\scalebox{1.2}{$\cdot$}}}
}
\theoremstyle{plain}
\newtheorem{theo}{Theorem}[section]
\newtheorem{lem}[theo]{Lemma}
\newtheorem{prop}[theo]{Proposition}
\theoremstyle{definition}
\newtheorem{example}[theo]{Example}
\newtheorem{definition}[theo]{Definition}
\theoremstyle{plain}
\newtheorem{lemma}[theo]{Lemma}
\newtheorem{theorem}[theo]{Theorem}
\theoremstyle{definition}
\newtheorem{remark}[theo]{Remark}
\theoremstyle{plain}
\newtheorem{thmint}{Theorem}
\newtheorem{corint}[thmint]{Corollary}
\newtheorem{propint}[thmint]{Proposition}
\renewcommand{\=}{:=}
\renewcommand{\a}{\alpha}
\renewcommand{\b}{\beta}
\renewcommand{\d}{\delta}
\newcommand{\e}{\varepsilon}
\newcommand{\f}{\varphi}
\newcommand{\g}{\gamma}
\renewcommand{\k}{\kappa}
\renewcommand{\l}{\lambda}
\newcommand{\z}{\zeta}
\renewcommand{\S}{\Sigma}
\renewcommand{\L}{\Lambda}
\newcommand{\bR}{\mathbb{R}}
\newcommand{\bN}{\mathbb{N}}
\newcommand{\ga}{\mathfrak{a}}
\newcommand{\gc}{\mathfrak{c}}
\renewcommand{\gg}{\mathfrak{g}}
\newcommand{\gh}{\mathfrak{h}}
\newcommand{\gk}{\mathfrak{k}}
\newcommand{\gl}{\mathfrak{l}}
\newcommand{\gm}{\mathfrak{m}}
\newcommand{\gn}{\mathfrak{n}}
\newcommand{\gp}{\mathfrak{p}}
\newcommand{\gt}{\mathfrak{t}}
\newcommand{\gz}{\mathfrak{z}}
\newcommand{\so}{\mathfrak{so}}
\newcommand{\su}{\mathfrak{su}}
\newcommand{\fG}{\mathsf{G}}
\newcommand{\fH}{\mathsf{H}}
\newcommand{\fK}{\mathsf{K}}
\newcommand{\fL}{\mathsf{L}}
\newcommand{\fN}{\mathsf{N}}
\newcommand{\fGL}{\mathsf{GL}}
\newcommand{\fSL}{\mathsf{SL}}
\newcommand{\fO}{\mathsf{O}}
\newcommand{\fSO}{\mathsf{SO}}
\newcommand{\fU}{\mathsf{U}}
\newcommand{\fSU}{\mathsf{SU}}
\newcommand\SO{\mathrm{SO}}
\newcommand{\Sym}{\mathrm{Sym}}
\newcommand{\cC}{\mathcal{C}}
\newcommand{\eB}{\EuScript{B}}
\newcommand{\eF}{\EuScript{F}}
\newcommand{\eM}{\EuScript{M}}
\newcommand{\eS}{\EuScript{S}}
\newcommand{\eU}{\EuScript{U}}
\newcommand{\sF}{\mathscr{F}}
\newcommand{\sM}{\mathscr{M}}
\newcommand{\sU}{\mathscr{U}}
\newcommand{\sW}{\mathscr{W}}
\newcommand{\la}{\langle}
\newcommand{\ra}{\rangle}
\newcommand{\rar}{\rightarrow}
\renewcommand{\square}{\kern1pt\vbox
{\hrule height 0.6pt\hbox{\vrule width 0.6pt\hskip 3pt \vbox{\vskip
6pt}\hskip 3pt\vrule width 0.6pt}\hrule height0.6pt}\kern1pt}
\renewcommand{\=}{\  \raisebox{0.15mm}{:} {=} \ }
\newcommand{\rank}{\operatorname{rank}}
\DeclareMathOperator\Tr{Tr}
\DeclareMathOperator\Lie{Lie}
\DeclareMathOperator\ric{ric}
\DeclareMathOperator\scal{scal}
\DeclareMathOperator\Ad{Ad}
\DeclareMathOperator\ad{ad}
\DeclareMathOperator\vol{vol} 
\DeclareMathOperator\Id{Id}
\DeclareMathOperator{\vspan}{span}
\newcommand\Ric{\operatorname{Ric}}
\newcommand\Rm{\operatorname{Rm}}
\newcommand{\wt}{\widetilde}
\newcommand{\ol}{\overline}
\newcommand{\zero}{\operatorname{o}}
\def\<#1,#2>{\langle\,#1,\,#2\,\rangle}
\newcommand{\aac}{\`a}
\newcommand{\Aac}{\`A}
\newcommand{\Math}{{\it Mathematica\raise5 pt\hbox{$\scriptscriptstyle \circledR$}7}}
\newcommand{\n}{\nabla}
\newcommand{\beq}{\begin{equation}}
\newcommand{\eeq}{\end{equation}}
\def\<#1,#2>{\langle\,#1,\,#2\,\rangle}
\newcommand{\arr}{\begin{array}{rlll}}
\newcommand{\ea}{\end{array}}
\newcommand{\bea}{\begin{eqnarray}}
\newcommand{\eea}{\end{eqnarray}}
\newcommand{\bean}{\begin{eqnarray*}}
\newcommand{\eean}{\end{eqnarray*}}
\def\sideremark#1{\ifvmode\leavevmode\fi\vadjust{
\vbox to0pt{\hbox to 0pt{\hskip\hsize\hskip1em
\vbox{\hsize3cm\tiny\raggedright\pretolerance10000
\noindent #1\hfill}\hss}\vbox to8pt{\vfil}\vss}}}
\newcounter{ssig}
\newcounter{ttig}
\title[Diverging sequences of unit volume invariant metrics with bounded curvature]{Diverging sequences of unit volume invariant metrics \\ with bounded curvature}
\author{Francesco Pediconi}
\subjclass[2010]{53C30, 53C21, 57S15}
\keywords{Compact homogenous spaces, invariant Riemannian metrics, curvature bounds.}
\thanks{This work was supported by GNSAGA of INdAM} 
\begin{document}
\begin{abstract}
We study 1-parameter families in the space $\eM^{\fG}_1$ of $\fG$-invariant, unit volume metrics on a given compact, connected, almost-effective homogeneous space $M=\fG/\fH$. In particular, we focus on diverging sequences, i.e. that are not contained in any compact subset of $\eM^{\fG}_1$, and we prove some structure results for those which have bounded curvature. We also relate our results to an algebraic version of collapse. \end{abstract}

\maketitle


\section{Introduction} \setcounter{equation} 0

Given a compact, connected smooth manifold $M^m$ acted transitively and almost effectively by a compact Lie group $\fG$, the space $\eM^{\fG}$ of $\fG$-invariant Riemannian metrics on $M$ endowed with its standard $L^2$-metric $\la\,\cdot\,,\,\cdot\,\ra$ is a (finite dimensional) Riemannian symmetric space with non positive sectional curvature, and the subset $\eM^{\fG}_1 \subset \eM^{\fG}$ of unit volume $\fG$-invariant metrics is a totally geodesic submanifold (see e.g. \cite[Sec 4.1]{B\"o1}). We denote by $\fH$ the isotropy subgroup of $\fG$ at some distinguished point $x_{\zero} \in M$. \smallskip

It is well known that $\fG$-invariant unit volume Einstein metrics on $M$ can be characterized variationally as the critical points of the scalar curvature functional $\scal: \eM^{\fG}_1 \rar \bR$. In \cite{BWZ}, with the aim of searching for general saddle points, the authors proved that the functional $\scal$ satisfies the {\it Palais-Smale condition} on the subsets $(\eM^{\fG}_1)_{\e} \=\{g \in \eM^{\fG}_1 : \scal(g)\geq \e \}$, with $\e>0$. Namely, if $(g^{(n)}) \subset \eM^{\fG}_1$ is a sequence for which $\scal(g^{(n)}) \rar \e$ and $\big|\Ric^{\zero}(g^{(n)})\big|_{g^{(n)}} \rar 0$, then one can extract a subsequence which converges in the $\cC^{\infty}$-topology  to an Einstein metric $g^{(\infty)} \in \eM^{\fG}_1$ with $\scal(g^{(\infty)}) = \e>0$ \cite[Thm A]{BWZ}. Here, $\Ric^{\zero}(g^{(n)})$ is the traceless Ricci tensor of $g^{(n)}$ and $|\cdot|_{g^{(n)}}$ is the norm induced by $g^{(n)}$ on the tensor bundle over $M$. As is well known, the traceless Ricci tensor is precisely the negative gradient vector of the functional $\scal$ with respect to the standard $L^2$-metric $\la\,\cdot\,,\,\cdot\,\ra$.

On the other hand, again in \cite{BWZ}, the authors also studied the so called {\it $0$-Palais-Smale sequences}, i.e. $(g^{(n)}) \subset \eM^{\fG}_1$ such that $\scal(g^{(n)}) \rar 0$ and $\big|\Ric^{\zero}(g^{(n)})\big|_{g^{(n)}} \rar 0$. Notice that, unlike the previous case, a $0$-Palais-Smale sequence $(g^{(n)})$ cannot have convergent subsequences if $M$ is not a torus. This means that $(g^{(n)})$ goes off to infinity on the set $\eM^{\fG}_1$ and consequently we say that such sequences are {\it divergent}. Remarkably, there are topological obstructions on the existence of $0$-Palais-Smale sequences. In fact by \cite[Thm 2.1]{BWZ} if $M$ admits a $0$-Palais-Smale sequence, then there exists a closed, connected intermediate subgroup $\fH^{\zero} \subsetneq \fK^{\zero} \subset \fG^{\zero}$ such that the quotient $\fK^{\zero}/\fH^{\zero}$ is a torus. Here, $\fH^{\zero}$ and $\fG^{\zero}$ denote the identity components of $\fH$ and $\fG$, respectively.

This last theorem is optimal if the isotropy group $\fH$ is connected. In case $\fH$ is disconnected, the authors conjectured that $\fG/\fH$ is itself a homogeneous torus bundle \cite[p. 697]{BWZ}. \smallskip

The first main result proved in this paper for the purpose of generalizing \cite[Thm 2.1]{BWZ} is 

\begin{thmint} Let $M^m=\fG/\fH$ be a compact, connected homogenous space. If there exists a diverging sequence $(g^{(n)}) \subset \eM^{\fG}_1$ with bounded curvature, i.e. with $|\sec(g^{(n)})| \leq C$ for some constant $C>0$, then there exists an intermediate closed subgroup $\fH\subsetneq \fK \subset \fG$ such that the quotient $\fK/\fH$ is a torus. \label{MAIN} \end{thmint}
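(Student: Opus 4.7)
The plan is to exploit the bounded sectional curvature to isolate an $\Ad(\fH)$-invariant abelian subspace inside $\gm$ and then exponentiate it to obtain the required subgroup $\fK$. First I fix a bi-invariant background metric $Q$ on $\fG$, with reductive splitting $\gg=\gh\oplus\gm$, and identify every $g\in\eM^{\fG}$ with the $\Ad(\fH)$-equivariant positive self-adjoint operator $A:\gm\to\gm$ defined by $g=Q(A\,\cdot\,,\,\cdot\,)$, so that unit volume becomes $\det A=\mathrm{const}$. Decomposing $\gm$ into $\Ad(\fH)$-isotypic summands and simultaneously diagonalizing each $A_n$ inside them (using compactness of the centralizer of $\Ad(\fH)|_{\gm}$), a subsequence argument produces an $\Ad(\fH)$-invariant splitting $\gm=\gm_1\oplus\cdots\oplus\gm_p$ on which $A_n$ acts as a scalar $\mu_i^{(n)}\to\mu_i\in[0,\infty]$. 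The divergence of $(g^{(n)})$, combined with $\det A_n=\mathrm{const}$, forces $\mu_{\min}^{(n)}\to0$ and $\mu_{\max}^{(n)}\to\infty$.

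The central step is to turn the hypothesis $|\sec(g^{(n)})|\leq C$ into uniform algebraic bounds on the Lie bracket of $\gg$. Writing the curvature tensor in a $g^{(n)}$-orthonormal frame adapted to $\gm_1,\dots,\gm_p$ via the standard formulas for the curvature of a reductive homogeneous metric, and evaluating sectional curvatures of suitably chosen coordinate $2$-planes, I expect to extract two estimates, uniform in $n$: (a) $\mu_k^{(n)}\leq C_1\,\mu_i^{(n)}\mu_j^{(n)}$ whenever $[\gm_i,\gm_j]$ has a non-trivial component in $\gm_k$, and (b) $\mu_i^{(n)}\mu_j^{(n)}\geq C_2^{-1}$ whenever $[\gm_i,\gm_j]$ has a non-trivial component in $\gh$. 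Roughly (a) controls how \emph{small} a bracket summand can be compared to its factors, while (b) forbids $\gh$-valued brackets between shrinking summands.

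I then single out the slowest-shrinking subspace
\[
W_0 \ := \ \bigoplus\bigl\{\gm_i\,:\,\mu_i^{(n)}/\mu_{\min}^{(n)}\ \text{stays bounded}\bigr\},
\]
which is non-zero and $\Ad(\fH)$-invariant by construction. For $\gm_i,\gm_j\subset W_0$ one has $\mu_i^{(n)}\mu_j^{(n)}=O\bigl((\mu_{\min}^{(n)})^2\bigr)\to0$, so (b) forces $[\gm_i,\gm_j]_{\gh}=0$; and if $[\gm_i,\gm_j]_{\gm_k}\neq 0$ for some $k$, estimate (a) gives $\mu_{\min}^{(n)}\leq\mu_k^{(n)}\leq C_3(\mu_{\min}^{(n)})^2$, contradicting $\mu_{\min}^{(n)}\to0$. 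Hence $[W_0,W_0]=0$ and $W_0$ is an abelian Lie subalgebra of $\gg$. Letting $\fT$ be the closure of $\exp(W_0)$ in $\fG$, which is a torus normalized by $\fH$ because $W_0$ is $\Ad(\fH)$-invariant, the compact subgroup $\fK:=\fT\cdot\fH$ strictly contains $\fH$ (any small $\exp(X)$ with $0\neq X\in W_0$ lies outside $\fH$, since $W_0\cap\gh=0$), and $\fK/\fH=\fT/(\fT\cap\fH)$ is a torus.

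The main obstacle will be the curvature-to-bracket translation in the second step: the sectional curvature of a non-normal invariant metric carries both Lie-bracket contributions and the symmetric $U$-tensor term, so cleanly isolating the individual ratios $\mu_k^{(n)}/(\mu_i^{(n)}\mu_j^{(n)})$ and $(\mu_i^{(n)}\mu_j^{(n)})^{-1}$ from a pointwise bound on $|\sec(g^{(n)})|$ will require combining sectional curvatures of several coordinate $2$-planes with carefully chosen coefficients. A secondary technical issue is the simultaneous diagonalization of $(A_n)$ along a subsequence when $\Ad(\fH)$ acts with non-trivial multiplicities, which is handled by a further compactness argument inside the centralizer of $\Ad(\fH)|_{\gm}$.
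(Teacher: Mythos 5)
Your overall architecture --- isolate the fastest-shrinking $\Ad(\fH)$-invariant block $W_0$, show it brackets trivially with itself and with $\gh$, and exponentiate --- is the same as the paper's, and your endgame (closing up $\exp(W_0)\cdot\fH$ and quotienting by $\fH$) is essentially the paper's Proposition \ref{K}. The gap is in the central step, the ``curvature-to-bracket translation'', which you flag as an obstacle but do not carry out, and whose headline estimate is false as stated. Estimate (a), namely $\mu_k^{(n)}\leq C_1\,\mu_i^{(n)}\mu_j^{(n)}$ whenever $[\gm_i,\gm_j]_{\gm_k}\neq0$, fails already for the Berger spheres of Example \ref{Berger}: there $[\gm_1,\gm_2]_{\gm_3}\neq0$ while $\lambda_3(t)=e^{\sqrt{6}t/6}\to+\infty$ and $\lambda_1(t)\lambda_2(t)=e^{-\sqrt{6}t/6}\to0$, yet the curvature tends to zero. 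You only apply (a) when $\gm_i,\gm_j\subset W_0$, but even that restricted version cannot be read off individual $2$-planes: for $X\in\gm_i$, $Y\in\gm_j$ the sectional curvature is a \emph{sum} over $k$ of terms $\tfrac{\lambda_i^2+\lambda_j^2-3\lambda_k^2-2\lambda_i\lambda_j+2\lambda_i\lambda_k+2\lambda_j\lambda_k}{4\lambda_i\lambda_j\lambda_k}\big|[X,Y]_{\gm_k}\big|_Q^2$ whose signs depend on the relative sizes of the eigenvalues, so a negative summand of order $\lambda_k/(\lambda_i\lambda_j)$ can a priori be cancelled by positive summands of order $1/\lambda_i$ coming from other values of $k$ (and, when $i=j$, from the $\gh$-component). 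Overcoming this is precisely the content of the paper's Theorem \ref{preltheo}: one first uses the scalar curvature lower bound to place the limit direction among B\"ohm's submersion directions, which kills all brackets $[\gm_i,\gm_j]_{\gm_k}$ with $i$ fastest-shrinking and $j,k$ on distinct higher levels, and then runs a descending induction on the ordered eigenvalues showing that the potentially negative summands individually tend to zero before one may conclude that the remaining non-negative ones do as well. Without a substitute for this induction your estimates are unsupported; note also that (b) is vacuous for $i\neq j$, since $[\gm_i,\gm_j]_{\gh}=0$ automatically for distinct $Q$-orthogonal modules by $\Ad(\fG)$-invariance of $Q$, and its nontrivial content (the case $i=j$, i.e.\ vanishing of the Casimir constants $c_i$ on the shrinking blocks) is again obtained only at the end of the induction.

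A secondary but genuine issue is the claimed simultaneous diagonalization: when the isotropy representation has equivalent irreducible summands there is in general no \emph{fixed} splitting of $\gm$ on which every $A_n$ acts by scalars, even along a subsequence, because the eigenspaces rotate with $n$. One can only arrange $n$-dependent good decompositions converging to a limit decomposition, so the bracket coefficients $[ijk]^{(n)}$ themselves depend on $n$ and all conditions must be formulated asymptotically (as in conditions (A) and (B) of Theorem \ref{preltheo}, e.g.\ $\lim_{n}[ijk]^{(n)}p^{(n)}_{kj}=0$), which further complicates the extraction of your estimates.
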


We stress that the proof of Theorem \ref{MAIN} is purely algebraic and constructive. In fact, we show that the sum of the eigenspaces associated to all the {\it shrinking eigenvalues} of any diverging sequence $(g^{(n)}) \subset \eM^{\fG}_1$ with bounded curvature is a reductive complement of $\gh = \Lie(\fH)$ into an intermediate $\Ad(\fH)$-invariant Lie subalgebra $\gh \subsetneq \gl \subsetneq \gg =\Lie(\fG)$, which uniquely detects a strictly intermediate Lie subgroup $\fH \subsetneq \fL \subsetneq \fG$, possibly not closed, such that the quotient $\ol{\fL}/\fH$ is a torus. Clearly Theorem \ref{MAIN} follows by setting $\fK\=\ol{\fL}$. Actually, we know more about the structure of any such a sequence: $(g^{(n)})$ approaches asymptotically, in a precise sense, a submersion-type metric with respect to the (locally) homogeneous fibration $\fL/\fH \rar \fG/\fH \rar \fG/\fL$ whose fibers shrink as $n\to+\infty$. We refer to Theorem \ref{main2} for more details.

Let us also remark that in \cite{BLS} the following estimate was proved: there exists a uniform constant $C>0$ which depends only on the dimension $m \in \bN$ such that \beq |\Rm(g)|_g \leq C |\Ric(g)|_g \quad \text{ for any $g \in \eM^{\fG}$ } \, , \label{gapthm} \eeq where $\Rm(g)$ denotes the curvature operator of $g$ \cite[Thm 4]{BLS}. This implies in particular that any sequence $(g^{(n)}) \subset \eM^{\fG}_1$ with $\scal(g^{(n)}) \rar \d \geq 0$ and $\big|\Ric^{\zero}(g^{(n)})\big|_{g^{(n)}} \rar 0$ has bounded curvature and hence, assuming that $M$ is not a torus, $0$-Palais-Smale sequences are special examples of diverging sequences with bounded curvature. Consequently, since we require neither that the Lie groups $\fH, \fG$ are connected, nor that the traceless Ricci goes to zero, Theorem \ref{MAIN} generalizes \cite[Thm 2.1]{BWZ}. We stress that this proves the previously mentioned conjecture in \cite[p. 697]{BWZ}. On the other hand, we point out that \cite[Thm 2.1]{BWZ} allows for changing the transitive group actions, while our Theorem \ref{MAIN} does not. \smallskip

Letting $\fN_{\fG}(\fH^{\zero})$ be the normalizer of $\fH^{\zero}$ in $\fG$, from Theorem \ref{MAIN} we immediately obtain the following

\begin{corint} If there exists no intermediate closed subgroup $\fH\subsetneq \fK \subset \fG$ such that the quotient $\fK/\fH$ is a torus, e.g. when $\rank(\fH)=\rank(\fN_{\fG}(\fH^{\zero}))$, then any diverging $1$-parameter family in $\eM^{\fG}_1$ has unbounded curvature. In particular, in such a case the scalar curvature functional satisfies the Palais-Smale condition on all of the space $\eM^{\fG}_1$. \label{MAINcor1} \end{corint}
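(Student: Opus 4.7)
The first claim, that any diverging $1$-parameter family has unbounded sectional curvature, is essentially the contrapositive of Theorem~\ref{MAIN}. The plan is to reduce from a $1$-parameter family to a sequence: by the definition of divergence, any diverging $(g_t) \subset \eM^{\fG}_1$ admits a sequence of parameters along which the metrics leave every compact subset, yielding a diverging sequence. If the family had uniformly bounded sectional curvature, so would this sequence, and Theorem~\ref{MAIN} would supply an intermediate closed subgroup $\fH \subsetneq \fK \subset \fG$ with $\fK/\fH$ a torus, contradicting the hypothesis.

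The main point is thus the rank criterion, namely that $\rank(\fH) = \rank(\fN_{\fG}(\fH^{\zero}))$ forbids any closed intermediate subgroup $\fH \subsetneq \fK \subset \fG$ with $\fK/\fH$ a torus. I would argue by contradiction: suppose such $\fK$ exists. Since $\fK/\fH$ is connected, $\fK = \fK^{\zero}\fH$, and $\fK/\fH \cong \fK^{\zero}/(\fK^{\zero}\cap \fH)$. The identity component $(\fK^{\zero}\cap \fH)^{\zero}$ is a connected closed subgroup of $\fH$ containing $\fH^{\zero}$, hence equals $\fH^{\zero}$, while $\fK^{\zero}$ being connected normalizes $\fH^{\zero}$, so $\fK^{\zero} \subset \fN_{\fG}(\fH^{\zero})$. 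The quotient $\fK^{\zero}/\fH^{\zero}$ is then a compact connected Lie group that is a finite cover of the torus $\fK^{\zero}/(\fK^{\zero}\cap \fH)$, hence is itself a torus of positive dimension $d \geq 1$. From the exact sequence $1 \to \fH^{\zero} \to \fK^{\zero} \to T^d \to 1$ of compact connected Lie groups ranks add, giving
\[
\rank(\fN_{\fG}(\fH^{\zero})) \;\geq\; \rank(\fK^{\zero}) \;=\; \rank(\fH^{\zero}) + d \;>\; \rank(\fH),
\]
which contradicts the hypothesis. The main subtlety of the proof lies in this step, specifically in verifying the torus structure of $\fK^{\zero}/\fH^{\zero}$, which requires carefully handling the possibly disconnected intersection $\fK^{\zero}\cap \fH$.

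The final Palais-Smale assertion follows by combining the first part with the curvature gap estimate \eqref{gapthm}. Given a Palais-Smale sequence $(g^{(n)}) \subset \eM^{\fG}_1$, meaning that $|\scal(g^{(n)})|$ is uniformly bounded and $\big|\Ric^{\zero}(g^{(n)})\big|_{g^{(n)}} \to 0$, the decomposition
\[
\big|\Ric(g^{(n)})\big|_{g^{(n)}}^2 \;=\; \big|\Ric^{\zero}(g^{(n)})\big|_{g^{(n)}}^2 + \tfrac{1}{m}\,\scal(g^{(n)})^2
\]
shows that $\big|\Ric(g^{(n)})\big|_{g^{(n)}}$ is uniformly bounded, so \eqref{gapthm} yields a uniform bound on $\big|\Rm(g^{(n)})\big|_{g^{(n)}}$ and in particular on $|\sec(g^{(n)})|$. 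By the first part, $(g^{(n)})$ cannot be diverging, hence it lies in a compact subset of $\eM^{\fG}_1$ and admits a convergent subsequence, which is the Palais-Smale condition on all of $\eM^{\fG}_1$.
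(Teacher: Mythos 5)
Your proof is correct and follows the route the paper intends: the corollary is stated there as an immediate consequence of Theorem \ref{MAIN}, with no separate argument given, so your reduction of a diverging family to a diverging sequence, your verification of the rank criterion, and your use of the gap estimate \eqref{gapthm} to show that Palais--Smale sequences have bounded curvature (hence cannot diverge and therefore subconverge) are exactly the details being left to the reader.

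One justification is stated imprecisely. You write that ``$\fK^{\zero}$ being connected normalizes $\fH^{\zero}$'', but connectedness alone does not give this (consider $\fSO(2)\subset\fSO(3)$). What you actually need is that $\gk=\Lie(\fK)$ is toral, i.e. $[\gk,\gk]\subset\gh$; this is precisely the content of Proposition \ref{K} once $\fK$ is closed and $\fK/\fH$ is a torus. Then $[\gk,\gh]\subset\gh$, and the connectedness of $\fK^{\zero}$ upgrades this infinitesimal normalization to $\fK^{\zero}\subset\fN_{\fG}(\fH^{\zero})$. With that fixed, the remainder of your rank computation (the finite covering argument and additivity of ranks in $1\to\fH^{\zero}\to\fK^{\zero}\to T^{d}\to 1$) is sound; alternatively one can argue purely on the Lie algebra level, writing $\gk=\gh\oplus\ga$ with $\ga\neq\{0\}$ central in $\gk$, whence $\rank(\gn_{\gg}(\gh))\geq\rank(\gk)=\rank(\gh)+\dim\ga>\rank(\fH)$.
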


We remark that, again by means of \eqref{gapthm}, $0$-Palais-Smale sequences get flatter and flatter as they go off to infinity. This last observation, together with the aim of providing an algebraic proof of the Palais-Smale condition for the functional $\scal$ (e.g. see \cite[Sec 2]{B\"o3} for an algebraic proof of the Bochner Theorem), brought us to study diverging sequences inside the subsets $(\eM^{\fG}_1)_{\e}$, with $\e>0$. The second main result proved in this paper is 

\begin{thmint} Let $M^m=\fG/\fH$ be a compact, connected homogenous space and let $\e>0$. Assume that there exists a diverging sequence $(g^{(n)}) \subset (\eM^{\fG}_1)_{\e}$ with bounded curvature and let $\fK$ be the intermediate closed subgroup determined by $(g^{(n)})$ as in Theorem \ref{MAIN}. Then, there exists a second intermediate closed subgroup $\fK \subsetneq \fK' \subset \fG$ such that the quotient $\fK'/\fH$ is not a torus. \label{MAIN2} \end{thmint}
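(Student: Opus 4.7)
The plan is a direct argument by contradiction: I show that the scalar curvature lower bound $\scal(g^{(n)}) \geq \e > 0$ rules out $\fG/\fH$ being a torus, which then allows us to take $\fK' = \fG$.

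First I would argue that a compact homogeneous space $\fG/\fH$ diffeomorphic to a torus admits no $\fG$-invariant metric of positive scalar curvature. The cleanest justification is the Gromov--Lawson obstruction: a compact torus admits no Riemannian metric with $\scal > 0$. An alternative, purely algebraic route uses Borel's theorem that a compact connected Lie group acting almost effectively on $T^k$ is itself a torus, which forces $\fG^{\zero}$ to be abelian; the Wang--Ziller scalar curvature formula then immediately gives $\scal(g) = 0$ for every $\fG$-invariant metric $g$ on $\fG/\fH$, since all brackets $[\gm,\gm]_{\gm}$ and all Killing-form contributions vanish.

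Combining this with the hypothesis $\scal(g^{(n)}) \geq \e > 0$, we conclude that $\fG/\fH$ is not a torus. Since $\fK/\fH$ is a torus by Theorem \ref{MAIN}, in particular $\fK \subsetneq \fG$. Setting $\fK' \= \fG$ produces an intermediate closed subgroup satisfying $\fK \subsetneq \fK' \subseteq \fG$ and such that $\fK'/\fH = \fG/\fH$ is not a torus, which is exactly the required conclusion.

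The main obstacle is the structural implication ``$\fG/\fH$ diffeomorphic to a torus implies no $\fG$-invariant metric on $\fG/\fH$ has positive scalar curvature''. Invoking Gromov--Lawson yields an analytic shortcut at the cost of importing a deep theorem; staying within the paper's algebraic framework requires the Borel-type rigidity for compact transformation groups of tori together with the explicit Wang--Ziller formula, both of which are standard tools in the variational theory of invariant metrics on compact homogeneous spaces. One could also attempt to extract this from the refined asymptotic submersion structure behind Theorem \ref{MAIN} by arguing that if $\fG/\fH$ were toral, the shrinking fibers would exhaust $\gm$ and force the total volume of $g^{(n)}$ to collapse, contradicting the unit-volume normalization.
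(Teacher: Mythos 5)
Your argument is valid for the statement as literally written, but only because the second inclusion in ``$\fK \subsetneq \fK' \subset \fG$'' is not required to be strict: you take $\fK' = \fG$ and reduce the whole theorem to the fact that a torus carries no homogeneous metric of positive scalar curvature. That fact is correct, and both of your justifications work --- the Gromov--Lawson/Schoen--Yau obstruction, or, closer to the paper's framework, the classical result that a compact connected Lie group acting transitively and almost effectively on a torus is itself a torus, whence $\gg$ is abelian and \eqref{scal} gives $\scal \equiv 0$ for every invariant metric (one should first pass to $\fG^{\zero}$, which still acts transitively and almost effectively since $M$ is connected, as the paper allows $\fG$ to be disconnected). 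Combined with your correct observation that $\fK/\fH$ being a torus forces $\fK \subsetneq \fG$, this produces a closed $\fK'$ with the required properties.

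This is, however, a genuinely different and much weaker route than the paper's. The paper proves Theorem \ref{MAIN2} via Theorem \ref{main2}: it constructs $\fK'$ as the closure of the subgroup generated by $\fH$ and the eigenspaces of all \emph{generalized bounded} eigenvalues of $(g^{(n)})$, i.e. $\gl' = \gh + \sum_{i \in I^{\rm gb}} \gm_i^{(\infty)}$, and the decisive step is the final claim of Theorem \ref{main2}: if $\gl'$ were toral, B\"ohm's estimate \eqref{scaltorest} together with $\l_{r(p)+1}^{(n)} \to +\infty$ would force $\limsup_n \scal(g^{(n)}) \leq 0$, contradicting $\scal(g^{(n)}) \geq \e$. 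That argument yields a typically \emph{proper} subgroup $\fK' \subsetneq \fG$ (e.g. $\fSO(4)$ inside $\fSO(5)$ in the Stiefel example of Subsection \ref{V3}) encoding the asymptotic structure of the degenerating sequence, and this finer information is what the paper actually uses afterwards (for instance in the proof of Proposition \ref{MAINcor2}, which needs a coefficient $[i_{\zero}j_{\zero}s_{\zero}]^{(\infty)}>0$ with $i_{\zero} \in I^{\rm sh}$ and $j_{\zero}, s_{\zero} \notin I^{\rm sh}$). Your choice $\fK'=\fG$ gives nothing beyond Theorem \ref{MAIN} plus a classical fact about tori, and tellingly it never uses the bounded-curvature or divergence hypotheses beyond what Theorem \ref{MAIN} already provides. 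Also, your closing suggestion that a toral $\fG/\fH$ would force the shrinking directions to exhaust $\gm$ and collapse the volume is not correct as stated: on a flat torus one can shrink some directions and expand others at unit volume with zero curvature. So: the proposal meets the letter of the statement, but it sidesteps the theorem's actual content.
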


As above, the proof of Theorem \ref{MAIN2} is is purely algebraic and constructive. In fact, we show that the sum of the eigenspaces associated to all the {\it generalized bounded eigenvalues} of any diverging sequence $(g^{(n)}) \subset (\eM^{\fG}_1)_{\e}$ with bounded curvature is a reductive complement of $\gh$ into a second intermediate $\Ad(\fH)$-invariant Lie subalgebra $\gh \subsetneq \gl \subsetneq \gl' \subsetneq \gg$, which uniquely detects a strictly intermediate Lie subgroup $\fL \subsetneq \fL' \subsetneq \fG$, possibly not closed, such that the quotient $\ol{\fL'}/\fH$ is not a torus. Again, Theorem \ref{MAIN2} follows by setting $\fK'\=\ol{\fL'}$. 

We also exhibit an example of a sequence of unit volume invariant metrics on the Stiefel manifold $V_3(\bR^5)=\fSO(5)/\fSO(2)$ which diverges with bounded curvature and whose scalar curvature converges to a positive constant. In that case, referring to the notation above, the intermediate subgroups are $\fL=\fK=\fSO(2){\times}\fSO(2)$ and $\fL'=\fK'=\fSO(4)$. We highlight here that, unlike the previous case, this example shows that a sequence $(g^{(n)}) \subset (\eM^{\fG}_1)_{\e}$ which diverges with bounded curvature does not necessarily approach asymptotically a submersion-type metric with respect to the (locally) homogeneous fibration $\fL'/\fH \rar \fG/\fH \rar \fG/\fL'$ given by the bigger Lie subgroup $\fL'$ (see Subsection \ref{V3}). \smallskip

Up to now, we still do not have an algebraic proof of the Palais-Smale condition for the scalar curvature functional on the subsets $(\eM^{\fG}_1)_{\e}$. We hope to consider this in a future paper. \smallskip

Finally, we relate our results on diverging sequences with bounded curvature to an algebraic version of collapse, which naturally arises in the study of equivariant convergence of locally homogeneous Riemannian spaces. We recall that a sequence $(g^{(n)}) \subset \eM^{\fG}$ is said to be {\it algebraically collapsed} if the norm of the bracket of the Lie algebra $\gg$ blows up along $(g^{(n)})$, that is $|\mu|_{Q_{\gh}+g^{(n)}} \rar +\infty$, where $\mu \in \L^2\gg^* \otimes \gg$ is just $\mu(X,Y) \= [X,Y]$ and $Q_{\gh}$ is any $\Ad(\fH)$-invariant inner product on $\gh$, which is needed to extend $g^{(n)}$ to the whole $\gg$. Geometrically, this condition is equivalent (see \cite[Sec 9]{BL}) to the existence of a sequence of $g^{(n)}$-Killing vector fields $X^{(n)}$ induced by the action of $\fG$ on $M$ such that $$\big|X^{(n)}_{x_{\zero}}\big|_{g^{(n)}} =1 \,\, , \quad \big|(\n^{g^{(n)}}\!X^{(n)})_{x_{\zero}}\big|_{g^{(n)}} \rar +\infty \,\, .$$ Roughly speaking it means that, up to normalize with respect to the $1$-jet norm, the sequence $(X^{(n)})$ is {\it running into the isotropy at $x_{\zero}$} as $n \rar +\infty$.

Of course algebraically collapsed sequences are necessarily divergent. Remarkably, the following weaker converse assertion follows from Theorem \ref{main2}.

\begin{propint} Let $M^m=\fG/\fH$ be a compact, connected homogenous space and suppose that $\pi_1(M)$ is finite. If $(g^{(n)}) \subset \eM^{\fG}_1$ is a diverging sequence with bounded curvature, then it is algebraically collapsed. \label{MAINcor2} \end{propint}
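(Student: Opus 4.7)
My plan is to argue by contradiction: assume $(g^{(n)})$ is not algebraically collapsed, and derive a topological obstruction incompatible with $\pi_1(M)$ being finite. After passing to a subsequence we may assume $|\mu|_{Q_{\gh}+g^{(n)}} \leq C$ uniformly in $n$. I then invoke the structural content of Theorem \ref{main2}: there is an $\Ad(\fH)$-invariant reductive decomposition $\gg = \gh \oplus \gm_1 \oplus \gm_2$, where $\gm_1 = \gl \ominus \gh \neq 0$ is the span of the shrinking eigenspaces, $\gl = \gh \oplus \gm_1$ is a subalgebra with $[\gl,\gl] \subset \gh$ (since $\ol{\fL}/\fH$ is a torus), the $g^{(n)}$-eigenvalues on $\gm_1$ tend to zero while those on $\gm_2$ remain bounded away from zero, and I take $\gm_2$ to be the $(-B_{\gg})$-orthogonal complement of $\gm_1$ in $\gm$, where $B_{\gg}$ denotes the (negative definite) Killing form of the compact Lie algebra $\gg$.

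The algebraic heart of the argument is to show that non-collapse forces $\gm_1$ to be an abelian ideal of $\gg$. First, for $X, Y \in \gm_1$ one has $[X,Y] \in \gh$ and the $g^{(n)}$-unit rescalings contribute $|[X,Y]|^2_{Q_{\gh}}/(|X|^2_{g^{(n)}}|Y|^2_{g^{(n)}})$ to $|\mu|^2_{Q_{\gh}+g^{(n)}}$, which blows up unless $[X,Y] = 0$; hence $[\gm_1,\gm_1] = 0$. The $\Ad$-invariance of $B_{\gg}$ combined with this vanishing yields
\[
B_{\gg}([X,Y],Z) \; = \; -B_{\gg}(Y,[X,Z]) \; = \; 0 \qquad \text{for } X,Z \in \gm_1, \, Y \in \gm_2,
\]
so $[\gm_1,\gm_2]$ is $B_{\gg}$-orthogonal to $\gm_1$, that is, $[\gm_1,\gm_2] \subset \gh \oplus \gm_2$. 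A parallel eigenvalue estimate, invoking the submersion-type asymptotics of Theorem \ref{main2} (in particular that the $\gm_2$-eigenvalues scale uniformly under the unit-volume constraint), rules out both the residual $\gm_2$- and $\gh$-components of every bracket $[X,Y]$ with $X\in\gm_1, Y\in\gm_2$, yielding $[\gm_1,\gm_2] = 0$. Combined with the standard $[\gh,\gm_1] \subset \gm_1$ this gives $[\gg,\gm_1] \subset \gm_1$, so $\gm_1$ is an abelian ideal of $\gg$.

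Since $\gg$ is compact, every abelian ideal lies in the center $Z(\gg)$, hence $\gm_1 \subset Z(\gg) \cap \gm$ is nonzero and $Z(\gg) \not\subset \gh$. Writing $\ell = \dim Z(\gg)$ and $\ell' = \dim(\gh \cap Z(\gg)) < \ell$, the torus $Z(\fG^{\zero})^{\zero} = T^{\ell}$ contributes a $\bZ^{\ell}$ free part to $\pi_1(\fG^{\zero})$, and the long exact sequence for the fibration $\fH^{\zero} \to \fG^{\zero} \to \fG^{\zero}/\fH^{\zero}$ shows that the image of $\pi_1(\fH^{\zero})$ in $\pi_1(\fG^{\zero})$ lies in the sublattice of rank $\ell'$ coming from $\gh \cap Z(\gg)$. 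Hence $\pi_1(\fG^{\zero}/\fH^{\zero})$ contains a free abelian subgroup of rank $\ell - \ell' \geq \dim\gm_1 > 0$, and since $\fG^{\zero}/\fH^{\zero}$ is a finite cover of $M$, it follows that $\pi_1(M)$ is infinite, contradicting the hypothesis.

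The hardest step will be the eigenvalue analysis ruling out the residual $\gm_2$- and $\gh$-components of $[\gm_1,\gm_2]$: it must accommodate the possibly varied scaling of individual eigenvalues under the unit-volume and bounded-curvature constraints, and uses in an essential way the finer submersion-type information contained in Theorem \ref{main2}, beyond the bare conclusion of Theorem \ref{MAIN}.
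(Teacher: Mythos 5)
Your proposal is correct in outline, and it is essentially the contrapositive of the paper's argument with the topological input moved from the beginning to the end. The paper first uses finiteness of $\pi_1(M)$ to replace $\fG$ by a connected \emph{semisimple} transitive subgroup, then invokes Lemma \ref{lemmablue} to produce $i_{\zero}\in I^{\rm sh}$ and $j_{\zero},s_{\zero}\notin I^{\rm sh}$ with $[i_{\zero}j_{\zero}s_{\zero}]^{(\infty)}>0$, and finishes with exactly your ``hardest step'': by condition (B) of Theorem \ref{main2}, $\l_{s_{\zero}}^{(n)}/\l_{j_{\zero}}^{(n)}\rar 1$, so the non-negative summand $[i_{\zero}j_{\zero}s_{\zero}]^{(n)}\l_{s_{\zero}}^{(n)}/(\l_{i_{\zero}}^{(n)}\l_{j_{\zero}}^{(n)})\sim [i_{\zero}j_{\zero}s_{\zero}]^{(\infty)}/\l_{i_{\zero}}^{(n)}$ of $|\mu_{\gm}|^2_{g^{(n)}}$ in \eqref{normbracket2} blows up. You instead keep $\fG$ general, deduce from non-collapse that $\gm_1$ is a central ideal, and contradict finiteness of $\pi_1$ via the homotopy exact sequence; since Lemma \ref{lemmablue} is itself proved by showing that the ineffectivity kernel would be a central ideal, the two routes rest on the same algebra. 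Yours avoids the (standard but here unproved) reduction to a semisimple transitive group at the cost of extra $\pi_1$ bookkeeping.

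Two slips should be repaired. First, the Killing form of $\gg$ is only negative \emph{semi}definite when $\gz(\gg)\neq\{0\}$ -- which is precisely the situation you steer into -- so you can neither define $\gm_2$ as the $B_{\gg}$-orthocomplement of $\gm_1$ nor deduce $[\gm_1,\gm_2]\subset\gh\oplus\gm_2$ from $B_{\gg}$-orthogonality. Use the positive definite $\Ad(\fG)$-invariant product $Q$ throughout: then $[\gm_1,\gm_1]=0$, $[\gh,\gm_1]=0$ and $[\gm_1,\gm_2]\subset\gm_2$ are automatic from the fact that $\gl=\gh+\gm_1$ is a toral $\fH$-subalgebra (see \eqref{mk}), and the only thing non-collapse must kill is the $\gm_2$-component of $[\gm_1,\gm_2]$, via the estimate displayed above. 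Second, the image of $\pi_1(\fH^{\zero})\otimes\bQ$ in $\pi_1(\fG^{\zero})\otimes\bQ\cong\gz(\gg)$ is the projection of $\gz(\gh)$ onto $\gz(\gg)$ along $[\gg,\gg]$, not the sublattice generated by $\gh\cap\gz(\gg)$; these differ in general, so your rank count as stated is not justified. It can be salvaged: $\gz(\gg)=[\gg,\gg]^{\perp_Q}$, so this projection is the self-adjoint $Q$-orthogonal one, and $\gm_1\perp_Q\gh$ together with $\gm_1\subset\gz(\gg)$ forces the projection of $\gh$ to be $Q$-orthogonal to $\gm_1$ inside $\gz(\gg)$; hence the cokernel has free rank at least $\dim\gm_1>0$ and $\pi_1(M)$ is infinite.
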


Notice that Proposition \ref{MAINcor2} is optimal. In fact, we provide an easy example of a sequence of unit volume invariant metrics on the product $S^1{\times}S^2$ which diverges with bounded curvature and is not algebraically collapsed. \smallskip

The paper is structured as follows. In Section \ref{section2}, we recall some basic properties of the space $\eM^{\fG}$ of $\fG$-invariant metrics and some well known formulas for the curvature of compact homogeneous Riemannian spaces, which will be needed afterwards. Section \ref{section3} is devoted to the study of $\fH$-subalgebras and submersion directions, which are of crucial importance in our interests. In Section \ref{section4}, we prove Theorem \ref{MAIN}, Theorem \ref{MAIN2} and we discuss an explicit example. In Section \ref{section5}, we briefly introduce the algebraic collapse and we prove Proposition \ref{MAINcor2}. Finally, in Appendix \ref{appendixA}, we provide a proof of a fundamental estimate, due to B\"ohm, which is needed in the proof of our main theorems, and we write down some computations related to the example that we saw in Section \ref{section4}. \medskip

\noindent{\it Acknowledgement.} This work has been set up during the visit of the author at WWU M\"unster. We warmly thank Christoph B\"ohm for his kind hospitality and for many fundamental discussions about several aspects of this paper. We are also grateful to Luigi Verdiani for his important suggestions. We thank Simon Lohove and Andrea Spiro for numerous pleasant conversations. Finally, we would like to thank the anonymous referee for his/her careful reading of the manuscript and useful comments. \smallskip

\section{Preliminaries and notation} \label{section2} \setcounter{equation} 0

\subsection{The space of $\fG$-invariant metrics} \hfill \par

Let $M=\fG/\fH$ be a compact, connected and almost effective $m$-dimensional homogeneous space, with $\fG$ and $\fH$ compact Lie groups. We fix once and for all an $\Ad(\fG)$-invariant Euclidean inner product $Q$ on the Lie algebra $\gg\=\Lie(\fG)$ and we indicate with $\gm$ the $Q$-orthogonal complement of $\gh\=\Lie(\fH)$ in $\gg$. From now on, we will always identify any $\fG$-invariant tensor field on $M$ with the corresponding $\Ad(\fH)$-invariant tensor on $\gm$ by the natural evaluation map at the point $e\fH \in M$. The restriction $Q_{\gm} \= Q|_{\gm\otimes\gm}$ of $Q$ on the complement $\gm$ defines a normal $\fG$-invariant metric on $M$. Up to a normalization we can assume that $\vol(Q_{\gm})=1$. We denote by $\eM^{\fG}$ the set of $\fG$-invariant metrics on $M$ and by $\eM^{\fG}_1$ the subset of unit volume ones. \smallskip

The set of inner products on $\gm$, which we indicate with $P(\gm)$, is an open cone in the space $\Sym(\gm,Q_{\gm})$ of symmetric endomorphism of $(\gm,Q_{\gm})$ by the embedding \beq g \,\longmapsto\, A_g \,\, , \quad g=Q_{\gm}(A_g \,\cdot\,,\,\cdot\,) \label{embPSym} \eeq and it is acted transitively by $\fGL(\gm)$ with isotropy in $Q_{\gm}$ isomorphic to $\fO(\gm,Q_{\gm})$. So it admits the coset space presentation $P(\gm)=\fGL(\gm)\big/\fO(\gm,Q_{\gm})$. It can also be endowed with the standard $\fGL(\gm)$-invariant Riemannian metric defined by \beq \la A_1,A_2\ra_g \= \Tr(A_g^{-1}A_1A_g^{-1}A_2) \quad \text{ for any $A_1,A_2 \in T_gP(\gm) \simeq \Sym(\gm,Q_{\gm})$ }\,\, . \label{stmet} \eeq Since the map $a \mapsto (a^T)^{-1}$ is an involutive automorphism of $\fGL(\gm)$ with fixed point set $\fO(\gm,Q_{\gm})$, $P(\gm)$ is a Riemannian symmetric space. The space $\eM^{\fG}$ is nothing but the fixed point set of the isometric action of $\fH$ on $P(\gm)$ given by \beq A_g \longmapsto (\Ad(h)|_{\gm})A_g(\Ad(h)|_{\gm})^T \,\, , \quad h \in \fH \, , \,\, g \in P(\gm) \label{Ad(H)-action} \eeq and so $\eM^{\fG}$ is a totally geodesic submanifold of $P(\gm)$. Since $P(\gm)$ splits isometrically as $\bR \times \fSL(\gm)/\fSO(\gm,Q_{\gm})$ and $\fSL(\gm)/\fSO(\gm,Q_{\gm})$ is a symmetric space of non-compact type, we conclude that $\eM^{\fG}$ endowed with the restriction of \eqref{stmet} is a Riemannian symmetric space with non-positive sectional curvature. 

We consider now a $Q_{\gm}$-orthogonal, $\Ad(\fH)$-invariant irreducible decomposition \beq \gm=\gm_1+{\dots}+\gm_{\ell} \label{invdec} \,\, .\eeq If the adjoint representation of $\fH$ is {\it monotypic}, i.e. $\gm_i \not\simeq\gm_j$ for any $1\leq i < j \leq \ell$, the decomposition \eqref{invdec} is unique up to ordering and by the Schur Lemma any invariant metric $g \in \eM^{\fG}$ can be uniquely written as \beq g= \l_1Q_{\gm_1} + \dots + \l_{\ell}Q_{\gm_{\ell}} \,\, , \label{diag} \eeq where $Q_{\gm_i} \= Q|_{\gm_i \otimes \gm_i}$ and $\l_1,\dots,\l_{\ell} \in \bR$ are positive coefficients. In general, the decomposition \eqref{invdec} is not unique if some modules $\gm_i$ are equivalent to each other and the invariant metrics need not to be diagonal anymore. We denote by $\sF^{\fG}$ the space of ordered, $Q_{\gm}$-orthogonal, $\Ad(\fH)$-invariant, irreducible decompositions of $\gm$, which is itself a compact homogeneous space (see \cite[Lemma 4.19]{B\"o1}). \smallskip

The space $\eM^{\fG}$ can be described in terms of any fixed decomposition $\f \in \sF^{\fG}$. Instead of using such approach, we will allow the decomposition of $\gm$ to vary in the space $\sF^{\fG}$. In fact, it is known that for any $g \in \eM^{\fG}$, there exists $\f=(\gm_1,{\dots},\gm_{\ell}) \in \eF^{\fG}$ with respect to which $g$ is diagonal, i.e. takes the form \eqref{diag} (see see e.g. \cite[Sec 1]{WZ}). Any such $\f$ will be called a {\it good decomposition for $g$}. Notice that an invariant metric $g$ may admit more good decompositions. \smallskip

Since $\eM^{\fG}$ is a symmetric space with non-positive sectional curvature, by the Cartan-Hadamard Theorem its Riemannian exponential map is surjective. Moreover, by \eqref{embPSym} and \eqref{Ad(H)-action} $$T_{Q_{\gm}}\eM^{\fG} = \Sym(\gm,Q_{\gm})^{\Ad(\fH)} = \Big\{v \in \Sym(\gm,Q_{\gm}): (\Ad(h)|_{\gm}){\cdot}v{\cdot}(\Ad(h)|_{\gm})^T=v \text{ for any } h \in \fH \Big\} \,\, .$$ For any fixed $v \in T_{Q_{\gm}}\eM^{\fG}$ there exists a decomposition $\f=(\gm_1,{\dots},\gm_{\ell}) \in \sF^{\fG}$ such that $$v=v_1Q_{\gm_1}+{\dots}+v_{\ell}Q_{\gm_{\ell}} \quad \text{ for some } \,\, v_1,{\dots},v_{\ell} \in \bR \,\, .$$ By \cite[p. 226]{Hel} the geodesic $\g_v(t)$ in $\eM^{\fG}$ starting from $Q_{\gm}$ and tangent to $v \in T_{Q_{\gm}}\eM^{\fG}$, with respect to the same decomposition $\f$, takes the form \beq \g_v(t)=e^{tv_1}Q_{\gm_1}+{\dots}+e^{tv_{\ell}}Q_{\gm_{\ell}} \,\, . \eeq Any such decomposition will be called {\it good decomposition for $v$}. Notice that the eigenvalues $v_i$ do not depend on the choice of the good decomposition. Since $\vol(\g_v(t))=\exp(t\Tr(v))$, it follows that $\eM^{\fG}_1$ is a totally geodesic submanifold of $\eM^{\fG}$. In particular, we consider the unit tangent sphere \beq \S \= \Big\{v \in \Sym(\gm,Q_{\gm})^{\Ad(\fH)} : \, \Tr(v^2)=1 \, , \,\, \Tr(v)=0 \Big\} \label{Sigma} \eeq so that $$\eM^{\fG}_1=\{Q_{\gm}\} \cup \{\g_v(t): v \in \S \, ,\,\, t>0\} \,\, .$$ Notice that the space $\eM^{\fG}_1$ is a singleton if and only if $\fG/\fH$ is isotropy irreducible. In that case $\S=\emptyset$.

\subsection{Curvature of compact homogeneous Riemannian spaces} \hfill \par

Let us fix a decomposition $\f=(\gm_1,{\dots},\gm_{\ell}) \in \eF^{\fG}$ for the reductive complement $\gm$ and set $I \= \{1,{\dots},\ell\}$. Notice that the number $\ell$ of irreducible invariant submodules does not depend on the choice of the decomposition $\f$. We set $d_i \= \dim(\gm_i)$ which are again, up to ordering, independent of $\f$. A basis $(e_{\a})$ for $\gm$ is said to be {\it $\f$-adapted} if $$e_{1},{\dots},e_{d_1} \in \gm_1 \,\, , \quad e_{d_1+1},{\dots},e_{d_1+d_2} \in \gm_2 \,\, , \quad {\dots} \quad , \quad e_{d_1+{\dots}+d_{\ell-1}+1},{\dots},e_n \in \gm_{\ell} \,\, .$$ For any subset $I' \subset I$, we set \beq \gm_{I'} \= \sum_{i\in I'}\gm_i \,\, , \quad d_{I'} \= \sum_{i\in I'}d_i \,\, .\eeq Moreover, for any $I_1,I_2,I_3 \subset I$ we define \beq [I_1I_2I_3]_{\f} \= \sum_{\substack{e_{\a}\in \gm_{I_1} \\ e_{\b}\in \gm_{I_2} \\ e_{\g}\in \gm_{I_3}}} Q([e_{\a},e_{\b}],e_{\g})^2 \,\, , \label{coef1} \eeq where $(e_{\a})$ is a $Q_{\gm}$-orthonormal $\f$-adapted basis for $\gm$. If  at least one of the three index sets is a singleton, say e.g. $I_1=\{i\}$, we will shortly write $[iI_2I_3]_{\f}$ instead of $[\{i\}I_2I_3]_{\f}$. Notice that $[I_1I_2I_3]_{\f}$ is symmetric in all three entries and does not depend on the choice of the $Q_{\gm}$-orthonormal basis $(e_{\a})$. Furthermore, $[I_1I_2I_3]_{\f} \geq 0$ with $[I_1I_2I_3]_{\f} = 0$ if and only if $[\gm_{I_1},\gm_{I_2}] \cap \gm_{I_3}=\{0\}$. Finally, though the coefficients $[I_1I_2I_3]_{\f}$ do depend on the choice of $\f$, the correspondence $\f \rar [I_1I_2I_3]_{\f}$ is a continuous function on $\eF^{\fG}$ (see \cite[Sec 4.3]{B\"o1}). \smallskip

We introduce now the Casimir operator $$C_{Q_{\gh}}: \gm \rar \gm \,\, , \quad C_{Q_{\gh}} \= -\sum_i\ad(z_i) \circ \ad(z_i) \,\, ,$$ where $Q_{\gh}\=Q|_{\gh \otimes \gh}$ and $(z_i)$ is any $Q_{\gh}$-orthonormal basis for $\gh$. Then, the following conditions hold: \beq C_{Q_{\gh}}|_{\gm_i}=c_i\Id_{\gm_i} \,\, , \label{c_i} \eeq with $c_i\geq0$ and $c_i=0$ if and only if $[\gh,\gm_i]=\{0\}$ (see \cite[Sec 1]{WZ}). We also define the coefficients $b_1,{\dots},b_{\ell} \in \bR$ by setting \beq (-B)|_{\gm_i\otimes\gm_i} = b_iQ_{\gm_i} \,\, , \label{b_i} \eeq where $B$ is the Cartan-Killing form of $\gg$. Since $\gg$ is compact, it follows that $b_i \geq 0$ and $b_i=0$ if and only if $\gm_i \subset \gz(\gg)$. If $\fG$ is semisimple, then one can choose $Q=-B$, so that $b_i=1$ for any $i$. \smallskip

Notice that both the coefficients $c_i$ and $b_i$ do depend on the choice of $\f$, while \beq b_{\fG/\fH}\=\Tr_{Q_{\gm}}(-B)=\sum_{i \in I}d_ib_i \label{bG/H} \eeq does not. Moreover, they are related by the following useful relation (see \cite[Lemma 1.5]{WZ}):
\beq d_ib_i=2d_ic_i+\sum_{j,k \in I}[ijk]_{\f} \quad \text{ for any $i \in I$ } \,\, . \label{dbc} \eeq

Let now $g \in \eM^{\fG}$ be a diagonal metric as in \eqref{diag} with respect to $\f$. The next proposition gives explicit formulas for the sectional curvature $\sec(g)$ of $g$ along $\f$-adapted $2$-planes in $\gm$. Notice that one could obtain \eqref{sec1} and \eqref{sec2} from \cite[Cor 1.13]{GZ} where the authors proved a more general formula for the sectional curvature of diagonal cohomogeneity one metrics.

\begin{prop} Let $X,Y \in \gm$ be $Q_{\gm}$-orthonormal vectors. If $X \in \gm_i$ and $Y \in \gm_j$ for some $i,j \in I$, then the sectional curvature of $g$ along $X\wedge Y$ is given by \begin{align}
&\sec(g)(X{\wedge}Y) = \frac1{\l_i}\big|[X,Y]_{\gh}\big|_Q^2+\sum_{k\in I}\frac{4\l_i-3\l_k}{4\l_i^2}\big|[X,Y]_{\gm_k}\big|_Q^2 \,\, ,& \quad \text{ if } i=j \,\, , \label{sec1} \\
&\sec(g)(X{\wedge}Y) = \sum_{k\in I}\frac{\l_i^2+\l_j^2-3\l_k^2-2\l_i\l_j+2\l_i\l_k+2\l_j\l_k}{4\l_i\l_j\l_k}\big|[X,Y]_{\gm_k}\big|_Q^2 \,\, ,& \quad \text{ if } i\neq j \,\, . \label{sec2}
\end{align}
\end{prop}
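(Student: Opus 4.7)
The plan is to apply the classical curvature formula for a reductive homogeneous space, exploiting the diagonal form of $g$ and the $\Ad(\fH)$-invariance of the decomposition $\f$.

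First I would compute the symmetric tensor $U:\gm{\times}\gm\to\gm$ defined by $2g(U(X,Y),Z)=g([Z,X]_\gm,Y)+g([Z,Y]_\gm,X)$, which encodes the non-naturally-reductive part of the Levi-Civita connection $(\n_X Y)_{x_{\zero}}=\tfrac{1}{2}[X,Y]_\gm+U(X,Y)$. For $X\in\gm_i$, $Y\in\gm_j$, $Z\in\gm_k$, the $\Ad(\fG)$-invariance of $Q$ in the form $Q([Z,X],Y)=Q(Z,[X,Y])$, combined with the diagonal relation $g|_{\gm_r\otimes\gm_r}=\l_rQ_{\gm_r}$, gives at once
\[
U(X,Y)_{\gm_k}=\frac{\l_j-\l_i}{2\l_k}\,[X,Y]_{\gm_k},
\]
so that $U(X,Y)=0$ when $i=j$, and $U(X,X)=U(Y,Y)=0$ always. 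I would also extract the crucial algebraic consequence of the $\Ad(\fH)$-invariance of each $\gm_r$: when $i\neq j$ one has $[X,Y]_\gh=0$, since for any $H\in\gh$ the element $[Y,H]$ lies in $\gm_j$, hence $Q([X,Y]_\gh,H)=Q(X,[Y,H])=0$ by $Q$-orthogonality of distinct modules.

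Next I would invoke the standard curvature identity
\[
\begin{aligned}
g(R(X,Y)Y,X)={}&-\tfrac{3}{4}|[X,Y]_\gm|_g^2-\tfrac{1}{2}g([X,[X,Y]_\gm]_\gm,Y)-\tfrac{1}{2}g([Y,[Y,X]_\gm]_\gm,X)\\
&+|U(X,Y)|_g^2-g(U(X,X),U(Y,Y))+g([[X,Y]_\gh,X],Y)
\end{aligned}
\]
and simplify each piece using $\Ad(\fG)$-invariance of $Q$. The key identity $Q([X,[X,Y]_{\gm_k}],Y)=-|[X,Y]_{\gm_k}|_Q^2$ yields
\[
g([X,[X,Y]_\gm]_\gm,Y)=-\l_j|[X,Y]_\gm|_Q^2,\qquad g([Y,[Y,X]_\gm]_\gm,X)=-\l_i|[X,Y]_\gm|_Q^2,
\]
while the formula for $U$ gives $|U(X,Y)|_g^2=\sum_k\tfrac{(\l_i-\l_j)^2}{4\l_k}|[X,Y]_{\gm_k}|_Q^2$, and a further application of $\Ad$-invariance gives $g([[X,Y]_\gh,X],Y)=\l_i|[X,Y]_\gh|_Q^2$ in the case $i=j$.

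Finally, since $X,Y$ are $Q_\gm$-orthonormal one has $|X{\wedge}Y|_g^2=\l_i\l_j$, so $\sec(g)(X{\wedge}Y)=g(R(X,Y)Y,X)/(\l_i\l_j)$. Collecting the contributions with common factor $|[X,Y]_{\gm_k}|_Q^2$ reproduces \eqref{sec1} in the case $i=j$ (where the $|[X,Y]_\gh|_Q^2/\l_i$ term cleanly separates out and $\l_i\l_j=\l_i^2$), and \eqref{sec2} in the case $i\neq j$ after the elementary rearrangement $-3\l_k^2+2\l_k(\l_i+\l_j)+(\l_i-\l_j)^2=\l_i^2+\l_j^2-3\l_k^2-2\l_i\l_j+2\l_i\l_k+2\l_j\l_k$. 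The main obstacle is the sign and normalization bookkeeping: two $Q$-vs-$g$ conversions per term, two applications of $\Ad$-invariance to move brackets across, and a squared $U$-term with $k$-dependent weights must all conspire into the very symmetric expression in \eqref{sec2}, where a single sign error would be fatal.
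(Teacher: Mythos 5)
Your proposal is correct and follows essentially the same route as the paper: both apply the Besse curvature formula for reductive homogeneous spaces, compute the tensor $U$ componentwise on the $\gm_k$ (your closed form $U(X,Y)_{\gm_k}=\frac{\l_j-\l_i}{2\l_k}[X,Y]_{\gm_k}$ is exactly what the paper obtains via an adapted orthonormal basis), and convert between $Q$ and $g$ before dividing by $|X\wedge Y|_g^2=\l_i\l_j$. The only cosmetic difference is that you isolate the isotropy contribution as an explicit term $g([[X,Y]_\gh,X],Y)$, whereas the paper keeps the full bracket $[X,Y]$ in the second-order terms and extracts $|[X,Y]_\gh|_Q^2$ from $|[X,Y]|_Q^2$ at the end; the two forms are equivalent by the $g$-skew-symmetry of $\ad(\gh)$.
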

\begin{proof}We put $\tilde{X} \=\frac1{\sqrt{\l_i}}X$, $\tilde{Y} \=\frac1{\sqrt{\l_j}}Y$. By \cite[Thm 7.30]{Bes} it holds that \beq \begin{aligned} \sec(X{\wedge}Y)= -\frac34\big|[\tilde{X},\tilde{Y}]_{\gm}\big|_g^2-\frac12g\big([\tilde{X},[\tilde{X},\tilde{Y}]]_{\gm},\tilde{Y}]\big)&-\frac12g\big([\tilde{Y},[\tilde{Y},\tilde{X}]]_{\gm},\tilde{X}]\big)+\\
&+\big|U^g(\tilde{X},\tilde{Y})\big|_g^2-g\big(U^g(\tilde{X},\tilde{X}),U^g(\tilde{Y},\tilde{Y})\big) \,\, ,\label{formulaBesse} \end{aligned} \eeq where $U^g:\gm\otimes\gm \rar \gm$ is the symmetric tensor uniquely defined by \beq 2g(U^g(X,Y),Z)\=g([Z,X]_\gm,Y)+g([Z,Y]_\gm,X) \,\, . \label{U}\eeq We observe that \begin{align}
&\big|[\tilde{X},\tilde{Y}]_{\gm}\big|_g^2= \sum_{k \in I}\frac{\l_k}{\l_i\l_j}\big|[X,Y]_{\gm_k}\big|_Q^2\,\, , \nonumber \\
&g([\tilde{X},[\tilde{X},\tilde{Y}]]_{\gm},\tilde{Y})=\frac1{\l_i}Q([X,[X,Y]],Y)=-\frac1{\l_i}\big|[X,Y]\big|_Q^2 \,\, , \label{1pezzo} \\
&g([\tilde{Y},[\tilde{Y},\tilde{X}]]_{\gm},\tilde{X})=\frac1{\l_j}Q([Y,[Y,X]],X)=-\frac1{\l_j}\big|[X,Y]\big|_Q^2 \,\, . \nonumber
\end{align} Let now $(e_{\a})$ be a $\f$-adapted $Q_{\gm}$-orthonormal basis for $\gm$. Then $$g(U^g(\tilde{X},\tilde{X}),e_{\a})=g([e_{\a},\tilde{X}],\tilde{X})=\frac1{\l_i}Q([X,X],e_{\a})=0$$ and so \beq U^g(\tilde{X},\tilde{X})=U^g(\tilde{Y},\tilde{Y})=0 \,\, . \label{2pezzo} \eeq Finally \begin{align}
|U^g(\tilde{X},\tilde{Y})|_g^2&=\sum_{k \in I}\sum_{e_{\a} \in \gm_k}g(U^g(\tilde{X},\tilde{Y}),\textstyle\frac1{\sqrt{\l_k}}e_{\a})^2 \nonumber \\
&=\sum_{k \in I}\sum_{e_{\a} \in \gm_k}\frac1{4\l_i\l_j\l_k}\Big(g([e_{\a},X],Y)+g([e_{\a},Y],X)\Big)^2 \label{3pezzo} \\
&=\sum_{k \in I}\frac{|\l_i-\l_j|^2}{4\l_i\l_j\l_k}\big|[X,Y]_{\gm_k}\big|_Q^2 \,\, . \nonumber
\end{align} By \eqref{1pezzo}, \eqref{2pezzo} and \eqref{3pezzo}, formula \eqref{formulaBesse} becomes \begin{align*}
\sec(g)(X{\wedge}Y)&= -\sum_{k \in I}\frac{3\l_k}{4\l_i\l_j}\big|[X,Y]_{\gm_k}\big|_Q^2+\frac12\Big(\frac1{\l_i}+\frac1{\l_j}\Big)\big|[X,Y]\big|_Q^2+\sum_{k \in I}\frac{|\l_i-\l_j|^2}{4\l_i\l_j\l_k}\big|[X,Y]_{\gm_k}\big|_Q^2 \\
&=\frac{\d_{ij}}{\l_i}\big|[X,Y]_{\gh}\big|_Q^2+\sum_{k 	\in I}\frac{2\l_i+2\l_j-3\l_k}{4\l_i\l_j}\big|[X,Y]_{\gm_k}\big|_Q^2+\sum_{k \in I}\frac{|\l_i-\l_j|^2}{4\l_i\l_j\l_k}\big|[X,Y]_{\gm_k}\big|_Q^2 \\
&=\frac{\d_{ij}}{\l_i}\big|[X,Y]_{\gh}\big|_Q^2+\sum_{k\in I}\frac{\l_i^2+\l_j^2-3\l_k^2-2\l_i\l_j+2\l_i\l_k+2\l_j\l_k}{4\l_i\l_j\l_k}\big|[X,Y]_{\gm_k}\big|_Q^2
\end{align*} and so both \eqref{sec1} and \eqref{sec2} follow. \end{proof}

As far as it concerns the Ricci tensor $\Ric(g): \gm \otimes \gm \rar \bR$, the following lemma holds true (see also \cite[Lemma 1.1]{PaSa}).

\begin{lemma} For any $1 \leq i \leq \ell$ it holds that \beq \Ric(g)|_{\gm_i \otimes \gm_i} = \l_i\ric_i(g)\,Q_{\gm_i} \,\, , \quad \ric_i(g) \= \frac{b_i}{2\l_i}-\frac1{2d_i}\sum_{j,k \in I}[ijk]_{\f}\frac{\l_k}{\l_i\l_j}+\frac1{4d_i}\sum_{j,k \in I}[ijk]_{\f}\frac{\l_i}{\l_j\l_k} \,\, . \label{ric} \eeq If the adjoint representation of $\fH$ on $\gm$ is monotypic, then the Ricci tensor decomposes as $$\Ric(g)=\l_1\ric_1(g)Q_{\gm_1}+{\dots}+\l_{\ell}\ric_{\ell}(g)Q_{\gm_{\ell}} \,\, .$$ \end{lemma}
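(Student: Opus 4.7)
The plan is to derive the Ricci formula by tracing the sectional curvature formulas \eqref{sec1} and \eqref{sec2}, using the $\Ad(\fH)$-invariant structure to reduce to a scalar on each irreducible block. First, I observe that $\Ric(g)|_{\gm_i\otimes\gm_i}$ is an $\Ad(\fH)$-invariant symmetric bilinear form on the $\Ad(\fH)$-irreducible module $\gm_i$. By Schur's lemma it must be proportional to $Q_{\gm_i}$, so it suffices to show that the trace equals $d_i\lambda_i\ric_i(g)$ with $\ric_i(g)$ as in \eqref{ric}. Equivalently, picking a $\f$-adapted $Q_{\gm}$-orthonormal basis $(e_\a)$ and writing $e_\b\in\gm_i$, I need to establish
\[
\sum_{\b:e_\b\in\gm_i}\Ric(g)(e_\b,e_\b)\,=\,d_i\lambda_i\ric_i(g).
\]

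For each such $e_\b$, I pass to the $g$-orthonormal basis $E_\a\=e_\a/\sqrt{\lambda_{k(\a)}}$, where $k(\a)$ denotes the block index. A short check shows that, since $e_\b$ and $e_\a$ are $Q_{\gm}$-orthogonal and lie in (possibly different) blocks, they are also $g$-orthogonal, and $|e_\b|_g^2|e_\a|_g^2-g(e_\b,e_\a)^2=\lambda_i\lambda_{k(\a)}$. Hence $g(R(e_\b,e_\a)e_\a,e_\b)=\lambda_i\lambda_{k(\a)}\sec(g)(e_\b\wedge e_\a)$, and summing against the normalized basis yields
\[
\Ric(g)(e_\b,e_\b)\,=\,\lambda_i\sum_{\a\neq\b}\sec(g)(e_\b\wedge e_\a).
\]
I then split the sum into the $j=i$ block (applying \eqref{sec1}) and the cross-block contributions $j\neq i$ (applying \eqref{sec2}), and sum over $\b$ with $e_\b\in\gm_i$. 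By definition \eqref{coef1}, the sums $\sum_{e_\b\in\gm_i,e_\a\in\gm_j}|[e_\b,e_\a]_{\gm_k}|_Q^2$ collapse to $[ijk]_\f$. By definition \eqref{c_i} of the Casimir together with the $\Ad$-invariance of $Q$, the sum $\sum_{e_\a,e_\b\in\gm_i}|[e_\b,e_\a]_{\gh}|_Q^2$ collapses to $\Tr(C_{Q_{\gh}}|_{\gm_i})=d_ic_i$.

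Putting everything together produces an expression for $d_i\lambda_i\ric_i(g)$ involving $c_i/\lambda_i$ together with two collections of terms of the form $[ijk]_\f\lambda_k/(\lambda_i\lambda_j)$ and $[ijk]_\f\lambda_i/(\lambda_j\lambda_k)$, plus boundary contributions from the $j=i$ case which are already of this form once one uses the symmetry $[iik]_\f=[iki]_\f=[kii]_\f$. The final step is to eliminate $c_i$ in favor of $b_i$ via the Wang–Ziller identity \eqref{dbc}, namely $c_i/\lambda_i=b_i/(2\lambda_i)-\frac{1}{2d_i\lambda_i}\sum_{j,k\in I}[ijk]_\f$; the extra $\sum_{j,k}[ijk]_\f/(2d_i\lambda_i)$ so produced combines with the term $-\frac{1}{2d_i}\sum_{j,k}[ijk]_\f\lambda_k/(\lambda_i\lambda_j)$ modulo the symmetry of $[ijk]_\f$, and after grouping one obtains formula \eqref{ric}. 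I expect the main obstacle to be precisely this last bookkeeping step: the quadratic polynomial in $\lambda_i,\lambda_j,\lambda_k$ appearing in \eqref{sec2} contains six terms, and after division by $4\lambda_i\lambda_j\lambda_k$ they must be grouped — using the full $\gS_3$-symmetry of $[ijk]_\f$ — so that all terms proportional to $\lambda_k/(\lambda_i\lambda_j)$ collect into one sum and all terms proportional to $\lambda_i/(\lambda_j\lambda_k)$ into another.

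Finally, the monotypic statement is immediate: if the decomposition \eqref{invdec} contains no equivalent summands, then any $g\in\eM^{\fG}$ and its Ricci tensor are simultaneously diagonal in any $\f\in\sF^{\fG}$ by \eqref{diag}, so the block formula implies $\Ric(g)=\sum_{i}\lambda_i\ric_i(g)Q_{\gm_i}$ globally.
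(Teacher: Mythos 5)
Your proposal is correct and follows essentially the same route as the paper's own proof: reduce to the trace on each irreducible block via Schur's lemma, express $\Ric(g)(e_\b,e_\b)$ as $\l_i$ times the sum of sectional curvatures over a $\f$-adapted basis, collapse the bracket sums to $d_ic_i$ and $[ijk]_{\f}$ via \eqref{dcbra}, and eliminate $c_i$ through \eqref{dbc} using the symmetry of $[ijk]_{\f}$ in $j,k$. The bookkeeping step you flag is exactly the one the paper carries out in \eqref{sumsecRic}, and your treatment of the monotypic case matches the paper's.
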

\begin{proof} By the Schur Lemma, for any $1 \leq i \leq \ell$ there exist $x_i \in \bR$ such that $\Ric(g)|_{\gm_i \otimes \gm_i} =x_iQ_{\gm_i}$. Then, letting $(e_{\a})$ be a $\f$-adapted $Q_{\gm}$-orthonormal basis for $\gm$, it necessarily holds that \beq \ric_i(g)=\frac{x_i}{\l_i} = \frac1{d_i\l_i}\sum_{e_{\a} \in \gm_i}\Ric(g)(e_{\a},e_{\a}) = \frac1{d_i}\sum_{e_{\a} \in \gm_i}\Ric(g)\big({\textstyle\frac{e_{\a}}{\sqrt{\l_i}}},{\textstyle\frac{e_{\a}}{\sqrt{\l_i}}}\big) \,\, . \label{xili} \eeq Notice that, from \eqref{coef1}, \eqref{c_i} and the $\Ad(\fG)$-invariance of $Q$, we directly obtain that \beq \sum_{\substack{e_{\a} \in \gm_i \\ e_{\b} \in \gm_j}}\big|[e_{\a},e_{\b}]_{\gh}\big|_Q^2=\d_{ij}d_ic_i \,\, , \quad \sum_{\substack{e_{\a} \in \gm_i \\ e_{\b} \in \gm_j}}\big|[e_{\a},e_{\b}]_{\gm_k}\big|_Q^2=[ijk]_{\f} \,\, . \label{dcbra} \eeq Therefore for any fixed $i \in I$ we get \begin{align}
\sum_{j\in I}\sum_{\substack{e_{\a} \in \gm_i \\ e_{\b} \in \gm_j}}\sec(g)(e_{\a}\wedge e_{\b}) &\overset{{\color{white} (2.13)}}{=} \sum_{j \in I}\sum_{\substack{e_{\a} \in \gm_i \\ e_{\b} \in \gm_j}}\frac{\d_{ij}}{\l_i}\big|[e_{\a},e_{\b}]_{\gh}\big|_Q^2+ \nonumber \\
&\phantom{aaaaaaaa}+\sum_{j,k \in I}\sum_{\substack{e_{\a} \in \gm_i \\ e_{\b} \in \gm_j}}\frac{\l_i^2+\l_j^2-3\l_k^2-2\l_i\l_j+2\l_i\l_k+2\l_j\l_k}{4\l_i\l_j\l_k}\big|[e_{\a},e_{\b}]_{\gm_k}\big|_Q^2 \nonumber \\
&\overset{\eqref{dcbra}}{=}\frac{d_ic_i}{\l_i}+\frac14\sum_{j,k \in I}[ijk]_{\f}\frac{\l_i^2-(\l_j-\l_k)^2}{\l_i\l_j\l_k} \label{sumsecRic} \\
&\overset{\eqref{dbc}}{=}\frac{d_ib_i}{2\l_i}+\frac14\sum_{j,k \in I}[ijk]_{\f}\Big(-\frac{\l_j^2+\l_k^2}{\l_i\l_j\l_k}+\frac{\l_i}{\l_j\l_k}\Big) \nonumber\\
&\overset{{\color{white} (2.13)}}{=} \frac{d_ib_i}{2\l_i}-\frac12\sum_{j,k \in I}[ijk]_{\f}\frac{\l_k}{\l_i\l_j}+\frac14\sum_{j,k \in I}[ijk]_{\f}\frac{\l_i}{\l_j\l_k} \,\, . \nonumber
\end{align} Finally, from \eqref{xili} and \eqref{sumsecRic} we conclude that \begin{align*}
\ric_i(g) &= \frac1{d_i}\sum_{e_{\a} \in \gm_i}\Ric(g)\big({\textstyle\frac{e_{\a}}{\sqrt{\l_i}}},{\textstyle\frac{e_{\a}}{\sqrt{\l_i}}}\big) \\
&= \frac1{d_i}\sum_{j\in I}\sum_{\substack{e_{\a} \in \gm_i \\ e_{\b} \in \gm_j}}\sec(g)(e_{\a}\wedge e_{\b}) \\
&= \frac{b_i}{2\l_i}-\frac1{2d_i}\sum_{j,k \in I}[ijk]_{\f}\frac{\l_k}{\l_i\l_j}+\frac1{4d_i}\sum_{j,k \in I}[ijk]_{\f}\frac{\l_i}{\l_j\l_k} \,\, .
\end{align*} The last claim follows directly by applying the Schur Lemma. \end{proof}

Notice that the coefficients $\ric_i$ defined in \eqref{ric} are precisely the diagonal terms of the Ricci operator $\wt{\Ric}(g):\gm \rar \gm$ given by the relation $\Ric(g)(X,Y)=g\big(\wt{\Ric}(g)(X),Y\big)$.

Finally, by \eqref{ric} it comes that the scalar curvature of $g$ is given by (see also \cite[Sec 1]{WZ}) \beq \scal(g) = \sum_{i\in I}d_i\ric_i(g)= \frac12\sum_{i\in I}\frac{d_ib_i}{\l_i}-\frac14\sum_{i,j,k \in I}[ijk]_{\f}\frac{\l_i}{\l_j\l_k} \,\, .\label{scal} \eeq

\smallskip

\section{$\fH$-subalgebras, submersion metrics and submersion directions} \label{section3} \setcounter{equation} 0

\subsection{$\fH$-subalgebras} \hfill \par

We consider again a compact, connected and almost effective $m$-dimensional homogeneous space $M=\fG/\fH$, with $\fG$ and $\fH$ compact Lie groups, and a fixed $\Ad(\fG)$-invariant Euclidean inner product $Q$ on the Lie algebra $\gg\=\Lie(\fG)$. We highlight that we call {\it Lie subgroup of $\fG$} any immersed submanifold of $\fG$ which is also a subgroup. We refer to \cite{B\"o1,B\"o2} for what concerns $\fH$-subalgebras and submersion directions. \smallskip

Since $\fG$ is compact it is well known that $\gg$ is reductive, i.e. its radical coincides with its center $\gz(\gg)$. We observe also that every Lie subalgebra $\gk \subset \gg$ is reductive itself. This last claim can be easily proved by noticing that restriction of $Q$ to $\gk$ is an $\Ad(\fK^{\zero})$-invariant Euclidean inner product on $\gk$, where we indicated with $\fK^{\zero}$ the connected Lie subgroup of $\fG$ with Lie algebra $\gk$. Hence, any Lie subalgebra $\gk \subset \gg$ splits as $\gk=[\gk,\gk] \oplus \gz(\gk)$. We denote also by $\ol{\fK^{\zero}}$ the closure of $\fK^{\zero}$ in $\fG$, which is itself a Lie group, and by $\ol{\gk}$ its Lie algebra, which is called {\it Malcev-closure of $\gk$ in $\gg$} \cite[p. 51]{OV}. Then, $\ol{\gk}$ is a compact subalgebra of $\gg$, possibly $\ol{\gk}=\gg$, and moreover $[\ol{\gk},\ol{\gk}]=[\gk,\gk]$ by \cite[Thm 3, p. 52]{OV}. 

\begin{definition} A {\it $\fH$-subalgebra of $\gg$} is an $\Ad(\fH)$-invariant intermediate Lie subalgebra $\gk$ which lies properly between $\gh=\Lie(\fH)$ and $\gg$. An $\fH$-subalgebra $\gk$ is called {\it toral} if $[\gk,\gk] \subset \gh$, {\it non-toral} if $[\gk,\gk] \not\subset \gh$. \label{Hsub}\end{definition}

Notice that if $\fH$ is connected, then the condition of $\Ad(\fH)$-invariance in the definition above is redundant. However, in the general case proper intermediate subalgebras which are not $\Ad(\fH)$-invariant can occur. \smallskip

Let us consider now an $\fH$-subalgebra $\gk\subset \gg$ and let $\fK^{\zero}$ be the only connected Lie subgroup of $\fG$ with Lie algebra $\Lie(\fK^{\zero})=\gk$. Of course, if $\fH$ is connected then $\fH \subset \fK^{\zero}$. However, in general it only holds that the identity component of $\fH$ stays in $\fH\cap \fK^{\zero}$ and there is no need for the whole subgroup $\fH$ to be contained in $\fK^{\zero}$. Anyway, we stress the following important fact.

\begin{prop} Let $\gk$ be an $\fH$-subalgebra of $\gg$ and $\fK^{\zero}$ be the only connected Lie subgroup of $\fG$ such that $\Lie(\fK^{\zero})=\gk$. Then, the subgroup $\fK$ generated by $\fH$ and $\fK^{\zero}$ is a Lie subgroup of $\fG$, not necessarily closed, with $\Lie(\fK)=\gk$. Moreover, $\fH$ is closed in $\fK$ and the quotient $\fK/\fH$ is connected. Finally, $\gk$ is toral if and only if $\ol{\fK}/\fH$ is a torus. \label{K} \end{prop}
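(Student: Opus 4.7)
The plan is to proceed in four stages, one per assertion in the statement. First I would use the $\Ad(\fH)$-invariance of $\gk$ to show that $\fH$ normalises $\fK^{\zero}$: for every $h\in\fH$ and $X\in\gk$, the identity $h\exp(X)h^{-1}=\exp(\Ad(h)X)$ puts $h\fK^{\zero}h^{-1}$ back inside $\fK^{\zero}$, so the subgroup $\fK\=\fK^{\zero}\fH=\fH\fK^{\zero}$ of $\fG$ is well defined. Since $\fH^{\zero}$ is generated by $\exp(\gh)\subset \exp(\gk)$, it lies in $\fK^{\zero}$, and in particular $\fH\cap\fK^{\zero}$ has finite index in $\fH$.

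Next I would put a Lie group structure on $\fK$. Fixing a finite set $F\subset\fH$ of representatives of $\fH/(\fH\cap\fK^{\zero})$, one gets the disjoint-union decomposition $\fK=\bigsqcup_{f\in F}f\fK^{\zero}$; transporting the immersed smooth structure of $\fK^{\zero}$ to each translate by left multiplication turns $\fK$ into a Lie subgroup of $\fG$ with identity component $\fK^{\zero}$ and Lie algebra $\gk$. Neither $\fK^{\zero}$ nor $\fK$ need be closed in $\fG$, which accounts for the caveat in the statement.

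Then I would address the closedness and connectedness assertions. The intersection $\fH\cap\fK^{\zero}$ contains $\fH^{\zero}$ with finite index and is therefore compact, hence closed in the Hausdorff Lie group $\fK^{\zero}$; passing to the decomposition $\fH=\bigsqcup_{f\in F}f(\fH\cap\fK^{\zero})$ shows that $\fH$ is closed in $\fK$. For the quotient, the identity $\fK=\fK^{\zero}\fH$ makes the composition $\fK^{\zero}\hookrightarrow\fK\to\fK/\fH$ surjective, so $\fK/\fH$ is the continuous image of a connected space and is therefore connected.

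For the toral characterisation I would pass to the closure. Since $\fH$ is compact, $\ol{\fK}=\ol{\fK^{\zero}\fH}=\ol{\fK^{\zero}}\fH$ is closed in the compact group $\fG$, hence compact; its identity component is $\ol{\fK^{\zero}}$ with Lie algebra $\ol{\gk}$, so $\ol{\fK}/\fH$ is a compact connected Lie group with Lie algebra $\ol{\gk}/\gh$. This quotient group is a torus precisely when $\ol{\gk}/\gh$ is abelian, i.e.\ when $[\ol{\gk},\ol{\gk}]\subset\gh$; by the identity $[\ol{\gk},\ol{\gk}]=[\gk,\gk]$ recalled from \cite[Thm 3, p. 52]{OV}, this is equivalent to $[\gk,\gk]\subset\gh$, which is the definition of $\gk$ being toral. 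The main obstacle is really a matter of careful bookkeeping rather than any deep step: one must consistently work with the immersed topology on $\fK$ (not the subspace topology it inherits from $\fG$) when phrasing "$\fH$ is closed in $\fK$", and one must verify that $\ol{\fK^{\zero}}$ is genuinely the identity component of $\ol{\fK}$, which is where the Malcev-closure identity does the essential work.
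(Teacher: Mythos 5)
Your treatment of the first three assertions is correct and essentially the paper's: the normalization of $\fK^{\zero}$ by $\fH$ via the $\Ad(\fH)$-invariance of $\gk$, the Lie group structure on $\fK=\fH\fK^{\zero}$ (the paper realizes it as $(\fH{\times}\fK^{\zero})/(\fH\cap\fK^{\zero})$ rather than as a disjoint union of translates of $\fK^{\zero}$, but these give the same structure), the closedness of $\fH$ in the manifold topology of $\fK$ via compactness, and the connectedness of $\fK/\fH$ as the continuous image of $\fK^{\zero}$.

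The toral characterization, however, has a genuine gap. You assert that $\ol{\fK}/\fH$ is \emph{a compact connected Lie group with Lie algebra $\ol{\gk}/\gh$} and then decide whether it is a torus by deciding whether $\ol{\gk}/\gh$ is abelian. But $\ol{\fK}/\fH$ is a priori only a homogeneous space: for it to be a quotient group one needs $\fH$ to be normal in $\ol{\fK}$, and for $\ol{\gk}/\gh$ to be a Lie algebra one needs $\gh$ to be an ideal of $\ol{\gk}$. Neither holds for a general $\fH$-subalgebra (take $\gh=\so(2)\subset\gk=\so(3)$, where $\fK/\fH=S^2$ is not a group), and this normality is precisely the nontrivial content of the forward implication. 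The paper supplies it: assuming $[\gk,\gk]\subset\gh$, the commutator subgroup $[\fK^{\zero},\fK^{\zero}]$ is connected with Lie algebra $[\gk,\gk]$, hence lies in $\fH\cap\fK^{\zero}$; therefore $C(k)(h)=[k,h]h\in\fH\cap\fK^{\zero}$ for all $k\in\fK^{\zero}$ and $h\in\fH\cap\fK^{\zero}$, so $\fH\cap\fK^{\zero}$ is normal in $\fK^{\zero}$ and $\fK/\fH\simeq\fK^{\zero}/(\fH\cap\fK^{\zero})$ is a compact connected Lie group, which is abelian because $\gk=\gh\oplus\ga$ with $[\gh,\ga]=[\ga,\ga]=\{0\}$. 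Your reduction of the converse direction to the Malcev-closure identity $[\ol{\gk},\ol{\gk}]=[\gk,\gk]$ is the right idea (and is all the paper says about that direction), but the forward direction needs the normality argument above rather than a Lie-algebra computation on a quotient that has not yet been shown to exist.
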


\begin{proof} Since $\gk$ is $\Ad(\fH)$-invariant, it follows that $\fH$ normalizes $\fK^{\zero}$, i.e. $C(h)(\fK^{\zero})\subset \fK^{\zero}$ for any $h \in \fH$, where $C(\cdot)$ indicates the conjugation inside $\fG$. Therefore the subgroup $\fK \subset \fG$ generated by $\fH$ and $\fK^{\zero}$ coincides with the set $\fH\fK^{\zero}=\{hk : h\in \fH \, , \,\, k \in \fK^{\zero}\}$. Since $$\fH\fK^{\zero} \simeq (\fH{\times}\fK^{\zero}) /\, \fH\cap \fK^{\zero} \,\, ,$$ where $\fH\cap \fK^{\zero}$ acts freely on $\fH{\times}\fK^{\zero}$ on the right by $(h,k)\cdot h' \= (hh',(h')^{-1}k)$, it comes that $\fK$ is a Lie subgroup of $\fG$ which is closed if and only if $\fK^{\zero}$ is closed in $\fG$. Since the identity component of $\fH$ is contained in $\fK^{\zero}$, it follows that the identity component of $\fK$ coincides with $\fK^{\zero}$ and hence $\Lie(\fK)=\gk$.

We notice now that $\fK$ is Hausdorff and $\fH$ is compact, hence $\fH$ is necessarily closed in $\fK$. Moreover, by the Second Isomorphism Theorem we get $\fK/\fK^{\zero} \simeq \fH/(\fH\cap \fK^{\zero})$ and hence $\fK/\fH$ is connected.

Let us suppose now that $\gk$ is toral. We can also assume that $\fK^{\zero}$ is closed in $\fG$. Otherwise, one can just reply the same argument as below by replacing $\gk$ with its Malcev-closure $\bar{\gk}$ inside $\gg$. We notice that by the Second Isomorphism Theorem $\fK/\fH \simeq \fK^{\zero}/(\fH\cap \fK^{\zero})$ and that the subgroup $\fH\cap \fK^{\zero}$ is normal in $\fK^{\zero}$. To prove this last claim, firstly we observe that it is straightforward to show that the commutator $[\fK^{\zero},\fK^{\zero}]$ is connected. Therefore, since $[\gk,\gk] \subset \gh$ it holds that $[\fK^{\zero},\fK^{\zero}] \subset \fH\cap \fK^{\zero}$ and hence $C(k)(h)=[k,h]h \in \fH \cap \fK^{\zero}$ for any $k \in \fK^{\zero}$, $h \in \fH \cap \fK^{\zero}$. This actually proves that $\fK/\fH$ is a compact, connected Lie group. Finally, by using the fact that $[\gk,\gk] \subset \gh$, the Lie algebra $\gk$ splits as $$\gk=\gh\oplus\ga \,\, , \quad \text{ with $[\gh,\ga]=[\ga,\ga]=\{0\}$}$$ and therefore $\fK/\fH$ is a torus. On the other hand, it is easy to check that if $\ol{\fK}/\fH$ is a torus, then $[\gk,\gk]=[\bar{\gk},\bar{\gk}] \subset \gh$ and this completes the proof. \end{proof}

From now on, we will always associate to any $\fH$-subalgebra $\gk \subset \gg$ the Lie subgroup $\fK \subset \fG$ defined as in Proposition \ref{K}.  If $\fK$ is closed in $\fG$, then it gives rise to the homogeneous fibration $\fK/\fH \rar \fG/\fH \rar \fG/\fK$ whose standard fiber $\fK/\fH$, which is not almost-effective in general, is a torus if and only if $\gk$ is toral.

If $\fK$ is not closed in $\fG$, then there always exist a neighborhood $\eU_{\fK} \subset \fK$ of the unit in the manifold topology of $\fK$ and two neighborhoods $\eU_{\fH} \subset \fH$, $\eU_{\fG} \subset \fG$ of the unit such that $\eU_{\fH} \subset \eU_{\fK} \subset \eU_{\fG}$, the canonical immersions $\eU_{\fH} \hookrightarrow \eU_{\fK} \hookrightarrow \eU_{\fG}$ are embeddings and the {\it local factor spaces} $\eU_{\fK}/\eU_{\fH}$, $\eU_{\fG}/\eU_{\fH}$, $\eU_{\fG}/\eU_{\fK}$ are well defined. We refer to \cite{Go} for a self-contained treatment of the theory of local (Lie) groups and to \cite{Mos,Sp} for what concerns local factor spaces and locally homogeneous manifolds (see also \cite[Sec 6]{Ped}). Again, we get a fibration $\sU_{\fK}/\sU_{\fH} \rar \sU_{\fG}/\sU_{\fH} \rar \sU_{\fG}/\sU_{\fK}$ and the local factor spaces $\sU_{\fG}/\sU_{\fH}$ and $\sU_{\fK}/\sU_{\fH}$ are locally equivariantly diffeomorphic to the global homogeneous spaces $\fG/\fH$ and $\fK/\fH$, respectively (see also \cite[Note 1.2]{Sp}). Moreover, $\fK/\fH$ is a dense submanifold of $\ol{\fK}/\fH$, which is a torus if and only if $\gk$ is toral.

For the sake of simplicity, since we do not need an exact notation for local factor spaces, from now on we will always write $\fG/\fK$, either when $\fK$ is closed in $\fG$ or not. \smallskip

Any $\fH$-subalgebra $\gk$ determines an $\Ad(\fH)$-invariant $Q$-orthogonal decomposition \beq \gg=\underbrace{\gh + \gm_{\gk}}_{\gk} + \!\!\!\!\!\!\!\!\!\!\!\!\overbrace{\phantom{\gm_{\gk} +}\gm_{\gk}^{\perp}}^{\gm} \,\, , \quad \text{ with }\, [\gk,\gm_{\gk}^{\perp}] \subset \gm_{\gk}^{\perp} \,\, .\label{mk} \eeq Since $\gk$ is reductive, $\gk$ is toral if and only if $\gm_{\gk}$ lies in the center of $\gk$, i.e. $[\gh,\gm_{\gk}]=[\gm_{\gk},\gm_{\gk}]=\{0\}.$ If $\gk$ is not compact, i.e. if the subgroup $\fK$ is not closed in $\fG$, from the equality $[\ol{\gk},\ol{\gk}]=[\gk,\gk]$ we get a finer $\Ad(\fH)$-invariant $Q$-orthogonal decomposition \beq \gg=\underbrace{\gk + \ga_{\gk}}_{\ol{\gk}} + \!\!\!\!\!\!\!\!\!\!\!\!\overbrace{\phantom{\gm_{\gk} +}\gm_{\ol{\gk}}^{\perp}}^{\gm_{\gk}^{\perp}}=\gh+\gm_{\bar{\gk}}+\gm_{\bar{\gk}}^{\perp} \,\, , \quad \text{ with }\, [\gk,\ga_{\gk}]=[\ga_{\gk},\ga_{\gk}]=\{0\} \,\, .\label{mbark} \eeq We remark also that any submodule of $\gm$ is $\Ad(\ol{\fK})$-invariant if and only if is $\Ad(\fK)$-invariant.

Finally, if we suppose that the group $\fG$ is semisimple, given any not necessarily compact toral $\fH$-subalgebra $\gk$, the following result holds.

\begin{lemma} Let $\gk$ be an $\fH$-subalgebra of $\gg$. If $\fG$ is semisimple and $\gk$ is toral, then $\gk$ is faithfully represented by its adjoint action on $\gm_{\gk}^{\perp}$.  \label{lemmablue} \end{lemma}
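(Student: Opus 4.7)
The plan is to identify the kernel
$$\gc \= \bigl\{X \in \gk : [X,Z]=0 \text{ for every } Z \in \gm_{\gk}^{\perp}\bigr\}$$
of the adjoint representation of $\gk$ on $\gm_{\gk}^{\perp}$, and to show $\gc=\{0\}$. The argument will combine the $\ad(\gk)$-invariance of $\gm_{\gk}^{\perp}$ already recorded in \eqref{mk}, semisimplicity of $\gg$, the torality hypothesis, and the standing assumption that $\fG/\fH$ is almost effective.

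First, I would show the key structural fact that $\gc$ is not merely a subalgebra of $\gk$ but an ideal of the whole $\gg$. For $Y \in \gk$ and $X \in \gc$, the Jacobi identity gives $[[Y,X],Z] = [Y,[X,Z]]-[X,[Y,Z]]$ for every $Z \in \gm_{\gk}^{\perp}$; the first summand vanishes since $X \in \gc$, and the second vanishes because $[Y,Z] \in \gm_{\gk}^{\perp}$ by \eqref{mk} and again $X \in \gc$. Hence $[Y,X] \in \gc$. For $Y \in \gm_{\gk}^{\perp}$ and $X \in \gc$ one has $[Y,X]=0 \in \gc$ by definition. Since $\gg = \gk + \gm_{\gk}^{\perp}$, this proves $[\gg,\gc] \subset \gc$.

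Second, since $\gg$ is semisimple, $\gc$ is itself semisimple, being a sum of the simple ideals of $\gg$ contained in it. In particular $[\gc,\gc]=\gc$. The torality hypothesis $[\gk,\gk] \subset \gh$ then forces
$$\gc \,=\, [\gc,\gc] \,\subset\, [\gk,\gk] \,\subset\, \gh \,\, .$$
Thus $\gc$ is an ideal of $\gg$ contained in $\gh$. Since $\fG/\fH$ is almost-effective, $\gh$ contains no nontrivial ideal of $\gg$, and therefore $\gc=\{0\}$, which is precisely the faithfulness claim.

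The only delicate step is the first one: verifying that $\gc$ is an ideal of $\gg$ rather than of $\gk$ alone. This is where the reductive decomposition \eqref{mk} with $[\gk,\gm_{\gk}^{\perp}]\subset\gm_{\gk}^{\perp}$ enters crucially; once this is granted, everything else reduces to the classical facts that ideals of semisimple Lie algebras are semisimple and that almost-effectiveness forbids nonzero ideals of $\gg$ inside $\gh$.
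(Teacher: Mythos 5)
Your proof is correct, and it takes a genuinely different and more elementary route than the paper's. The paper passes to the Malcev closure $\ol{\gk}$, considers the maximal normal subgroup $\fN$ of $\fG$ contained in $\ol{\fK}$, uses torality and $\gz(\gg)=\{0\}$ to push $\gn=\Lie(\fN)$ into $\gh$, concludes that $\fG/\ol{\fK}$ is almost-effective, and then invokes the faithfulness of the isotropy representation of an almost-effective homogeneous space (\cite[Cor 6.15]{PoSp}) before descending from $\ol{\gk}$ back to $\gk$. You instead work entirely at the Lie-algebra level: your Jacobi-identity computation shows that the kernel $\gc$ of $\ad(\gk)|_{\gm_{\gk}^{\perp}}$ is an ideal of all of $\gg$ (this is exactly where $[\gk,\gm_{\gk}^{\perp}]\subset\gm_{\gk}^{\perp}$ from \eqref{mk} is used), semisimplicity gives $\gc=[\gc,\gc]$, torality then places $\gc$ inside $\gh$, and almost-effectiveness kills it. Both arguments use semisimplicity and torality in comparable ways (the paper to force $\gn_2\subset\gz(\gg)=\{0\}$, you to force $\gc=[\gc,\gc]\subset[\gk,\gk]\subset\gh$), but your version is self-contained, avoids the closure $\ol{\gk}$ and the external citation, and isolates the reusable fact that the kernel of the isotropy representation attached to any reductive decomposition is automatically an ideal of the full Lie algebra. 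The paper's version buys a more geometric reading --- the lemma is really the statement that $\fG/\ol{\fK}$ is almost-effective --- at the cost of more machinery.
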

\begin{proof} Since $\fG$ is compact and $\ol{\fK}$ is closed in $\fG$, the quotient $\fG/\ol{\fK}$ is a reductive homogeneous space. Let now $\fN$ be the maximal normal subgroup of $\fG$ contained in $\ol{\fK}$ and $\gn\=\Lie(\fN)$. We consider also the $Q$-orthogonal decomposition $\gn=\gn_1+\gn_2$, with $\gn_1\= \gh \cap \gn$. Since $\gn$ is an ideal of $\gg$ and $\gn \subset \bar{\gk}$, it follows that $[\gn,\gm_{\bar{\gk}}^{\perp}]=\{0\}$. Moreover, since $\gn_2 \subset \gm_{\bar{\gk}}$ and $\gk$ is toral, it holds that $[\gn_2,\gh]=[\gn_2,\gm_{\bar{\gk}}]=\{0\}$. But then $\gn_2 \subset \gz(\gg)=\{0\}$ and so $\gn=\gn_1 \subset \gh$. Being $\fG/\fH$ almost-effective by assumption, it follows that $\gn=\{0\}$ and so $\fG/\ol{\fK}$ is almost-effective. Hence, its isotropy representation is faithful (see e.g. \cite[Cor 6.15]{PoSp}). But then $$\{X \in \gk: [X,\gm_{\gk}^{\perp}]=\{0\}\} \subset \{X \in \bar{\gk}: [X,\gm_{\bar{\gk}}^{\perp}]=\{0\}\}=\{0\}$$ and so the claim follows. \end{proof}

\subsection{Submersion metrics and submersion directions} \hfill \par

As a standard reference for what concerns Riemannian submersion, we refer to \cite[Ch 9]{Bes}. We recall here the following

\begin{definition} Let $\gk \subset \gg$ be an $\fH$-subalgebra. An invariant metric $g \in \eM^{\fG}$ is called {\it $\gk$-submersion metric} if $g(\gm_{\gk},\gm_{\gk}^{\perp})=\{0\}$ and its restriction on $\gm_{\gk}^{\perp} \otimes \gm_{\gk}^{\perp}$ is $\Ad(\fK)$-invariant. The set of all $\gk$-submersion metrics is denoted by $\eM^{\fG}(\gk)$ and the set of unit volume $\gk$-submersion metrics is denoted by $\eM^{\fG}_1(\gk) \= \eM^{\fG}_1 \cap \eM^{\fG}(\gk)$. \end{definition}

This definition is due to the fact that, given an $\fH$-subalgebra $\gk$, any metric $g \in \eM^{\fG}(\gk)$ gives rise to a (locally) homogeneous Riemannian submersion \beq \fK/\fH \rar (\fG/\fH,g) \rar (\fG/\fK,g|_{\gm_{\gk}^{\perp} \otimes \gm_{\gk}^{\perp}}) \label{Riemsub} \,\, .\eeq Moreover, by means of the following lemma, the submersion \eqref{Riemsub} has totally geodesic fibers.

\begin{lemma} Let $\gk \subset \gg$ be an $\fH$-subalgebra, $\fK$ the corresponding Lie subgroup and $g \in \eM^{\fG}$. If $g(\gm_{\gk},\gm_{\gk}^{\perp}){=}\{0\}$ with respect to the decomposition \eqref{mk}, then $\fK/\fH$ is totally geodesic in $(\fG/\fH,g)$. \label{totg}\end{lemma}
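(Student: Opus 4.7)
The plan is to show that the second fundamental form of the immersed submanifold $\fK/\fH$ in $(\fG/\fH,g)$ vanishes at the base point $x_{\zero}=e\fH$; the vanishing at every other point of $\fK/\fH$ will then follow from the $\fK$-equivariance of the inclusion together with the $\fK$-invariance of $g$. Under the standard identification $T_{x_{\zero}}(\fG/\fH)\simeq\gm=\gm_{\gk}\oplus\gm_{\gk}^{\perp}$, the tangent space to the fiber corresponds to $\gm_{\gk}$ and, by the hypothesis $g(\gm_{\gk},\gm_{\gk}^{\perp})=\{0\}$, the $g$-orthogonal normal space at $x_{\zero}$ is precisely $\gm_{\gk}^{\perp}$. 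Thus it suffices to verify that
$$g\bigl((\n^g_{X^*}Y^*)_{x_{\zero}},Z\bigr)=0\qquad\text{for every }X,Y\in\gm_{\gk},\ Z\in\gm_{\gk}^{\perp},$$
where $X^*,Y^*$ denote the fundamental vector fields generated by $X,Y$.

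For this I would invoke the standard Nomizu formula for the Levi-Civita connection of a reductive invariant metric (see \cite[Ch.~7]{Bes}),
$$(\n^g_{X^*}Y^*)_{x_{\zero}}\,=\,\tfrac12[X,Y]_{\gm}+U^g(X,Y),$$
with $U^g$ the symmetric tensor from \eqref{U}, and treat the two summands separately. For the bracket piece: since $X,Y\in\gm_{\gk}\subset\gk$ and $\gk$ is a Lie subalgebra, $[X,Y]\in\gk=\gh\oplus\gm_{\gk}$, hence $[X,Y]_{\gm}\in\gm_{\gk}$, and the hypothesis $g(\gm_{\gk},\gm_{\gk}^{\perp})=\{0\}$ immediately gives $g([X,Y]_{\gm},Z)=0$. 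For the $U^g$ piece I apply \eqref{U},
$$2g(U^g(X,Y),Z)\,=\,g([Z,X]_{\gm},Y)+g([Z,Y]_{\gm},X),$$
and combine it with the inclusion $[\gk,\gm_{\gk}^{\perp}]\subset\gm_{\gk}^{\perp}$ already recorded in \eqref{mk}: for $X,Y\in\gk$ and $Z\in\gm_{\gk}^{\perp}$ one gets $[Z,X],[Z,Y]\in\gm_{\gk}^{\perp}$, so both inner products on the right-hand side vanish by the $g$-orthogonality of $\gm_{\gk}$ and $\gm_{\gk}^{\perp}$. Putting the two computations together, the normal component of $(\n^g_{X^*}Y^*)_{x_{\zero}}$ vanishes.

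The only point I expect to require some care is the case in which $\fK$ is not closed in $\fG$: there $\fK/\fH$ is only an immersed submanifold of $\fG/\fH$, and total geodesy has to be read through the local factor space picture recalled immediately after Proposition \ref{K}. Since the computation above is purely algebraic in the decomposition $\gg=\gk\oplus\gm_{\gk}^{\perp}$, it transfers to the local setting without modification, so this is not a genuine obstacle but merely a matter of phrasing the conclusion in the appropriate local sense; in particular there is no need to pass to the Malcev closure $\ol{\gk}$, as the subalgebra property of $\gk$ alone already provides the two inclusions $[\gm_{\gk},\gm_{\gk}]\subset\gk$ and $[\gk,\gm_{\gk}^{\perp}]\subset\gm_{\gk}^{\perp}$ that drive the argument.
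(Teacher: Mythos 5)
Your argument is correct and is essentially the paper's own proof: the paper evaluates the single Koszul-type identity $2g(\n^g_{X_1^*}X_2^*,X_3^*)=-g([X_1,X_2]_{\gm},X_3)+g([X_3,X_1]_{\gm},X_2)+g([X_3,X_2]_{\gm},X_1)$ and kills the first term via $[\gm_{\gk},\gm_{\gk}]\subset\gk$ and the last two via $[\gk,\gm_{\gk}^{\perp}]\subset\gm_{\gk}^{\perp}$, which is exactly your split into the bracket piece and the $U^g$ piece (the sign in front of $\tfrac12[X,Y]_{\gm}$ in your Nomizu formula should be negative with the action-field convention $[X^*,Y^*]=-[X,Y]^*$, but this is immaterial to the vanishing). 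Your added remarks on equivariance and on the non-closed case are fine and consistent with the paper's setup.
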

\begin{proof} Let $X_1,X_2 \in \gm_{\gk}$ and $X_3 \in \gm_{\gk}^{\perp}$. Since by hypothesis $g(\gm_{\gk},\gm_{\gk}^{\perp})=\{0\}$, from \cite[Lemma 7.27]{Bes} we directly get that \begin{align*}
2g\big(\n^g_{X_1^*}X_2^*,X_3^*\big) &= g([X_1^*,X_2^*],X_3^*) +g([X_1^*,X_3^*],X_2^*) +g([X_2^*,X_3^*],X_1^*) \\
&= -g([X_1,X_2]_{\gm},X_3) +g([X_3,X_1]_{\gm},X_2) +g([X_3,X_2]_{\gm},X_1) \\
&= 0 \,\, , 
\end{align*} where we indicated with $X^*_x\=\frac{d}{dt}\exp(tX)\cdot x \big|_{t=0}$ the action vector field associated to $X \in \gg$, with $\n^g$ the Levi-Civita connection of $g$ and we used the fact that $[X,Y]^*=-[X^*,Y^*]$ for any $X,Y \in \gg$. This is equivalent of saying that the second fundamental form of $\fK/\fH$ in $(\fG/\fH,g)$ is identically zero, and so $\fK/\fH$ is totally geodesic. \end{proof}

Let now $\eM^{\fG}_1$ be the space of unit volume $\fG$-invariant metrics on $M=\fG/\fH$ and $\S \subset T_{Q_{\gm}}\eM^{\fG}_1$ the unit tangent sphere defined in \eqref{Sigma}. Fix $v \in \S$ and a good decomposition $\f$ for $v$. Let also $$\hat{v}_1<{\dots} <\hat{v}_{\ell_{v}}$$ be the distinct eigenvalues of $v$ ordered by size, and let $I_1^v(\f),{\dots},I_{\ell_v}^v(\f) \subset I=\{1,{\dots},\ell\}$ be the index sets defined by the condition \beq v_i =\hat{v}_s \iff i \in I_s^v(\f) \,\quad \text{ for every $s \in \{1,{\dots},\ell_v\}$, $i \in I$ } \,\, . \label{Iv} \eeq 

\begin{lem}[\cite{B\"o1}, Lemma 4.12 and Lemma 4.13] Let $v \in \S$ and let $\f$ be a good decomposition for $v$. Then $\ell_v >1$ and there exists a constant $c=c(\fG/\fH)>0$, which does not depend neither on $v$ nor $\f$, such that $\hat{v}_1<-c$ and $\hat{v}_{\ell_v}>c$. Furthermore, for any $1 \leq i,j,k \leq \ell_v$, the real number $[I^v_i(\f)I^v_j(\f)I^v_k(\f)]_{\f}$ does not depend on the choice of the good decomposition $\f$. \label{lemBo1} \end{lem}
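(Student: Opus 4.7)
My plan is to treat the three assertions separately, exploiting the fact that $v \in \S$ is a traceless $Q_{\gm}$-symmetric, $\Ad(\fH)$-invariant endomorphism of $\gm$ satisfying $\Tr(v^2) = 1$. The first assertion $\ell_v > 1$ is a direct contradiction argument: if $\ell_v = 1$, then $v$ has a single eigenvalue $\hat{v}_1$ of full multiplicity, hence $v = \hat{v}_1 \Id_{\gm}$. But then $\Tr(v) = m\,\hat{v}_1 = 0$ forces $\hat{v}_1 = 0$, contradicting $\Tr(v^2) = 1$. Combined with $\Tr(v) = 0$, this also yields $\hat{v}_1 < 0 < \hat{v}_{\ell_v}$ at every point of $\S$.

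For the uniform constant $c = c(\fG/\fH)$, I would use a compactness argument. By its definition in \eqref{Sigma}, $\S$ is a compact subset of $\Sym(\gm, Q_{\gm})^{\Ad(\fH)}$, and the maps
\[
v \longmapsto \hat{v}_1(v) = \min_{|x|_Q = 1} Q(vx, x), \qquad v \longmapsto \hat{v}_{\ell_v}(v) = \max_{|x|_Q = 1} Q(vx, x)
\]
are continuous on $\S$. By the previous step they are pointwise strictly negative, respectively strictly positive, and so they attain strictly signed extrema. Any $c$ with
\[
0 < c < \min\bigl(-\sup\nolimits_{\S} \hat{v}_1,\ \inf\nolimits_{\S} \hat{v}_{\ell_v}\bigr)
\]
then satisfies the claim and depends only on $\fG/\fH$.

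For the third assertion, the crucial observation is that for any good decomposition $\f = (\gm_1, \dots, \gm_{\ell})$ of $v$ the subspace $\gm_{I^v_s(\f)} = \sum_{i \in I^v_s(\f)} \gm_i$ coincides with the $\hat{v}_s$-eigenspace $\eE_s \subset \gm$ of $v$, and so depends only on $v$. Moreover any $\f$-adapted $Q_{\gm}$-orthonormal basis of $\gm$ is in particular adapted to the $Q$-orthogonal eigenspace decomposition $\gm = \eE_1 \oplus \cdots \oplus \eE_{\ell_v}$, and its restriction to each $\eE_s$ is a $Q$-orthonormal basis of $\eE_s$. Substituting into the definition of the bracket coefficient,
\[
[I^v_i(\f) I^v_j(\f) I^v_k(\f)]_{\f} = \sum_{\substack{e_\alpha \in \eE_i,\ e_\beta \in \eE_j \\ e_\gamma \in \eE_k}} Q([e_\alpha, e_\beta], e_\gamma)^2
\]
is then recognized as the squared Hilbert--Schmidt norm of the trilinear form $(X, Y, Z) \mapsto Q([X, Y], Z)$ on $\eE_i \times \eE_j \times \eE_k$, which is intrinsic to the triple of eigenspaces and hence independent of $\f$.

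I do not anticipate any serious obstacle: the argument is essentially linear algebra organized around the intrinsic eigenstructure of $v$. The only mildly delicate point is the continuity of $\hat{v}_1$ and $\hat{v}_{\ell_v}$ required in the compactness step, which is immediate from the displayed variational characterizations of the extremal eigenvalues.
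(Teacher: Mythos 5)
Your proof is correct. Note that the paper itself offers no argument for this lemma --- it is quoted directly from B\"ohm's Lemma 4.12 and Lemma 4.13 in \cite{B\"o1} --- and your reasoning (trace conditions plus $\Tr(v^2)=1$ for $\ell_v>1$ and the sign of the extremal eigenvalues, compactness of $\S$ with the Rayleigh-quotient characterization for the uniform constant $c$, and the identification of $\gm_{I^v_s(\f)}$ with the $\hat{v}_s$-eigenspace of $v$ together with the basis-independence of $[I_1I_2I_3]_{\f}$ for the last claim) is essentially the standard argument used there, so there is nothing to flag.
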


From \eqref{scal} it follows that the scalar curvature along the geodesic $\g_v(t)$ is \beq \scal(\g_v(t)) = \frac12\sum_{i\in I}d_ib_ie^{-tv_i}-\frac14\sum_{i,j,k \in I}[ijk]_{\f}e^{t(v_i-v_j-v_k)} \,\, . \label{scalg}\eeq We recall now the following definition, firstly introduced by B{\"o}hm.

\begin{definition}[\cite{B\"o1}, Def 5.11] Let $\eS^{\S}$ denote the set of all $v \in \S$ with the following property: if $\f$ is any good decomposition for $v$, then for all $(i,j,k) \in I^3$ it holds that \beq [ijk]_{\f}>0 \,\,\Longrightarrow\,\, v_i-v_j-v_k +\hat{v}_1\leq 0 \,\, . \label{subdir} \eeq Any element $v \in \eS^{\S}$ is called {\it submersion direction}. \end{definition}

Notice that \eqref{subdir} does not depend on the choice of the good decomposition $\f$ for $v$. Moreover, submersion directions (or {\it non-negative directions}, as originally named by B{\"o}hm) have the following remarkable property, which comes directly from \eqref{subdir}.

\begin{prop}[\cite{B\"o1}, Lemma 5.16] Let $v \in \eS^{\S}$ and let $\f$ be a good decomposition for $v$. Then \beq [I^v_1(\f)I^v_{j_1}(\f)I^v_{j_2}(\f)]_{\f}=0 \quad \text{ for any }\, 1 \leq j_1 < j_2 \leq \ell_v \,\, . \label{crucial}\eeq In particular, $\gk_1\=\gh + \gm_{I^v_1(\f)}$ is an $\fH$-subalgebra. \label{propBo1} \end{prop}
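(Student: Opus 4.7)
The plan is to reduce both assertions to the defining inequality of $\eS^{\S}$ combined with the full symmetry of $[ijk]_{\f}$ under permutations of its three entries.

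For the vanishing $[I_1^v(\f) I_{j_1}^v(\f) I_{j_2}^v(\f)]_{\f}=0$, I fix indices $i \in I_1^v(\f)$, $j \in I_{j_1}^v(\f)$, $k \in I_{j_2}^v(\f)$ with $1\leq j_1 < j_2\leq \ell_v$ and argue by contradiction assuming $[ijk]_{\f} > 0$. The key observation is that applying \eqref{subdir} with the natural labeling yields only $2\hat v_1 \leq \hat v_{j_1} + \hat v_{j_2}$, which is automatically true and hence useless. The nontrivial content is extracted using the symmetry of $[\cdot\,\cdot\,\cdot]_{\f}$: since $[kji]_{\f} = [ijk]_{\f} > 0$, the condition \eqref{subdir} applied to the permuted triple $(k,j,i)$ gives $v_k - v_j - v_i + \hat v_1 \leq 0$, which simplifies to $\hat v_{j_2}\leq \hat v_{j_1}$, contradicting $j_1 < j_2$. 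Summing the squared coefficients $Q([e_{\a},e_{\b}],e_{\g})^2$ over all such triples in $\gm_{I_1^v(\f)}{\times}\gm_{I_{j_1}^v(\f)}{\times}\gm_{I_{j_2}^v(\f)}$ then gives the asserted vanishing.

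For the second assertion, I would verify that $\gk_1 \= \gh + \gm_{I_1^v(\f)}$ satisfies all the requirements of Definition \ref{Hsub}. The $\Ad(\fH)$-invariance is automatic since each $\gm_i$ is $\Ad(\fH)$-invariant. The strict inclusion $\gh \subsetneq \gk_1$ follows from $I_1^v(\f) \neq \emptyset$, while $\gk_1 \subsetneq \gg$ follows from $\ell_v > 1$ as guaranteed by Lemma \ref{lemBo1}. Closure under the bracket then reduces, using the $\Ad(\fH)$-invariance of the decomposition $\gm = \sum_s \gm_{I_s^v(\f)}$, to showing that the $\gm_{I_s^v(\f)}$-component of $[\gm_{I_1^v(\f)},\gm_{I_1^v(\f)}]$ vanishes for every $s\geq 2$. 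By the characterization of $[\cdot\,\cdot\,\cdot]_{\f}$ as a sum of squares, this is equivalent to $[I_1^v(\f)I_1^v(\f)I_s^v(\f)]_{\f}=0$, which is precisely the first assertion specialized to $j_1=1$, $j_2=s$.

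The only subtle point — essentially the entire substance of the argument — is recognizing that the submersion-direction inequality is vacuous when the distinguished minimal eigenvalue $\hat v_1$ sits in the positive slot of \eqref{subdir}; the useful information is accessed only by invoking the full symmetry of $[\cdot\,\cdot\,\cdot]_{\f}$ to move the index with largest eigenvalue $\hat v_{j_2}$ into that slot. Once this relabeling is performed, the strict ordering of the distinct eigenvalues produces the contradiction immediately, and everything else is bookkeeping.
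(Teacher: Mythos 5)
Your proof is correct and is precisely the argument the paper leaves implicit (it cites B\"ohm and remarks that the property ``comes directly from \eqref{subdir}''): the only real content is the observation that one must exploit the full symmetry of $[\cdot\,\cdot\,\cdot]_{\f}$ to place the index with eigenvalue $\hat v_{j_2}$ in the positive slot of \eqref{subdir}, which you identify and execute correctly, and the verification that $\gk_1$ is an $\fH$-subalgebra then follows exactly as you describe.
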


This last proposition gives rise to a partition of the set $\eS^{\S}$ into the sets of {\it $\gk_1$-submersion directions}, which are defined by \beq \eS^{\S}(\gk_1) \= \{v \in \eS^{\S} : \gm_{I^v_1(\f)}=\gm_{\gk_1} \,\text{ for any good decomposition } \f \text{ for } v \} \,\, , \label{k-sub} \eeq for any $\fH$-subalgebra $\gk_1 \subset \gg$. As a direct generalization of \eqref{k-sub}, we are going to introduce a descending chains of subsets of $\eS^{\S}$, which will play a role in the next section. First, we define {\it flag of $\fH$-subalgebras} any ordered set $\z \=(\gk_1,{\dots},\gk_p)$ of $\fH$-subalgebras of $\gg$ such that $\gk_1 \subsetneq {\dots} \subsetneq \gk_p$. The {\it lenght of $\z$} is the cardinality $|\z|=p$. Notice that, by Proposition \ref{K}, any flag of $\fH$-subalgebras determines univocally a finite sequence of intermediate Lie subgroups $\fH \subsetneq \fK_1\subsetneq {\dots} \subsetneq \fK_p \subsetneq \fG$.

\begin{definition} Let $\z\= (\gk_1,{\dots},\gk_p)$ be a flag of $\fH$-subalgebras. A unit tangent vector $v \in \S$ is called {\it $\z$-submersion direction} if it satisfies the following conditions for any good decomposition $\f$ of $v$: \begin{itemize}
\item[i)] $\gk_1=\gh+\gm_{I^v_1(\f)} \, , \,\, \gk_2=\gk_1+\gm_{I^v_2(\f)} \, , {\dots}, \,\, \gk_p=\gk_{p-1}+\gm_{I^v_p(\f)}$ ;
\item[ii)] for any $1 \leq q \leq p$, for any $(i,j,k) \in \{q,{\dots},\ell_v\}^3$ it holds $$[I_i^v(\f)I_j^v(\f)I_k^v(\f)]_{\f}>0 \,\,\,\Longrightarrow\,\,\, \hat{v}_i-\hat{v}_j-\hat{v}_k+\hat{v}_q\leq 0 \,\, .$$
\end{itemize} The set of all $\z$-submersion directions is denoted by $\eS^{\S}(\z)$ or $\eS^{\S}(\gk_1,{\dots},\gk_p)$, equivalently. \label{zsub}\end{definition}

Given a flag of $\fH$-subalgebras $\z\= (\gk_1,{\dots},\gk_p)$, it follows from the very definition that $$\eS^{\S}(\z)=\eS^{\S}(\gk_1,{\dots},\gk_p) \subset \eS^{\S}(\gk_1,{\dots},\gk_{p-1}) \subset {\dots} \subset \eS^{\S}(\gk_1,\gk_2) \subset \eS^{\S}(\gk_1) \,\, .$$ Furthermore, the set $\eS^{\S}(\z)$ of $\z$-submersion directions is related with the notion of submersion type metrics by the following

\begin{prop} Let $\z=(\gk_1,{\dots},\gk_p)$ be a flag of $\fH$-subalgebras. Then, it holds that \beq \eS^{\S}(\z) \subset \eS^{\S} \cap T_{Q_{\gm}}\eM^{\fG}_1(\gk_q) \quad \text{ for any }\, 1\leq q \leq p \,\, , \label{expW} \eeq i.e. $\g_v(t) \in \eM^{\fG}_1(\gk_q)$ for any $v \in \eS^{\S}(\z)$, for any $t >0$, for any $1\leq q \leq p$. \label{propflag} \end{prop}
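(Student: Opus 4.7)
The proof decomposes into two tasks: $(\alpha)$ showing $v \in \eS^{\S}$, and $(\beta)$ showing $\g_v(t) \in \eM^{\fG}_1(\gk_q)$ for each $1 \leq q \leq p$ and each $t > 0$. Task $(\alpha)$ is immediate from Definition \ref{zsub}: specializing condition ii) to $q = 1$ and noting that each basis index $\a$ sits in a unique $\gm_{I_s^v(\f)}$ so that $v_{\a} = \hat{v}_s$, one recovers the defining inequality \eqref{subdir} of $\eS^{\S}$.

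For task $(\beta)$, I would fix $q$ and a good decomposition $\f$ for $v$. Since $\g_v(t) = \sum_{s=1}^{\ell_v} e^{t\hat{v}_s} Q_{\gm_{I_s^v(\f)}}$ is diagonal with respect to the $Q$-orthogonal decomposition $\gm = \gm_{I_1^v(\f)} + {\dots} + \gm_{I_{\ell_v}^v(\f)}$, and since condition i) identifies $\gm_{\gk_q} = \sum_{r \leq q} \gm_{I_r^v(\f)}$, the orthogonality $\g_v(t)(\gm_{\gk_q}, \gm_{\gk_q}^{\perp}) = 0$ is automatic; likewise $\vol(\g_v(t)) = 1$ follows from $\Tr(v) = 0$. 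What remains is the $\Ad(\fK_q)$-invariance of $\g_v(t)|_{\gm_{\gk_q}^{\perp} \otimes \gm_{\gk_q}^{\perp}}$. Since $\fK_q = \fH\fK_q^{\zero}$ by Proposition \ref{K} and each $\gm_{I_s^v(\f)}$ is already $\Ad(\fH)$-invariant, this reduces to proving that $\gm_{I_s^v(\f)}$ is $\ad(\gk_q)$-invariant for each $s > q$, which amounts to
\[
[\gm_{I_r^v(\f)}, \gm_{I_s^v(\f)}] \subset \gm_{I_s^v(\f)} \quad \text{whenever } r \leq q < s.
\]

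I would verify this inclusion in two steps. Vanishing of the $\gh$-component uses only the $\Ad(\fG)$-invariance of $Q$: for $Z \in \gh$, $X \in \gm_{I_r^v(\f)}$, $Y \in \gm_{I_s^v(\f)}$, one has $Q([X,Y], Z) = Q(Y, [Z,X])$, where $[Z, X] \in \gm_{I_r^v(\f)}$ by $\Ad(\fH)$-invariance; since $r \neq s$, this pairing vanishes. The genuinely delicate step is
\[
[I_r^v(\f)\, I_s^v(\f)\, I_t^v(\f)]_{\f} = 0 \quad \text{whenever } r \leq q < s \text{ and } t \neq s.
\]
Here I would apply condition ii) at the threshold $q' \= \min(r, s, t)$ (noting $q' \leq r \leq p$) and exploit the full permutation symmetry of $[\cdot\,\cdot\,\cdot]_{\f}$. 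If $q' = r$, so in particular $r \leq t$, then condition ii) applied with $(i,j,k) = (s, r, t)$ and with $(i,j,k) = (t, r, s)$ yields $\hat{v}_s \leq \hat{v}_t$ and $\hat{v}_t \leq \hat{v}_s$, forcing $s = t$ and contradicting $t \neq s$. If $q' = t < r$, applying condition ii) with $(i,j,k) = (s, r, t)$ gives $\hat{v}_s \leq \hat{v}_r$, which contradicts $s > q \geq r$. The remaining subcase $t = r$ is handled identically by the triple $(s, r, r)$.

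The main obstacle is precisely this last combinatorial step: one has to carefully match the index $q'$ in condition ii) of Definition \ref{zsub} with the smallest element of the multiset $\{r, s, t\}$, and then use the full $S_3$-symmetry of the structure constant $[\cdot\,\cdot\,\cdot]_{\f}$ to extract three incompatible eigenvalue inequalities. This is the natural higher-$q$ generalization of the mechanism already appearing in Proposition \ref{propBo1}.
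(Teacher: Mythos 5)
Your proof is correct and follows essentially the same route as the paper: reduce $\Ad(\fK_q)$-invariance to $\ad(\gk_q)$-invariance via Proposition \ref{K}, then kill the off-diagonal structure constants $[I_r^v(\f)\,I_s^v(\f)\,I_t^v(\f)]_{\f}$ by combining condition ii) of Definition \ref{zsub} with the full symmetry of $[\cdot\,\cdot\,\cdot]_{\f}$. The only cosmetic difference is that the paper disposes of the $\gh$-component (and the components in $\gm_{\gk_q}$) in one stroke by citing $[\gk_q,\gm_{\gk_q}^{\perp}]\subset\gm_{\gk_q}^{\perp}$ from \eqref{mk}, whereas you verify these by hand; your explicit case analysis on $\min(r,s,t)$ spells out details the paper leaves implicit.
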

\begin{proof} Let $v \in \eS^{\S}(\z)$ and $\f$ be a good decomposition for $v$. Fix $1\leq q \leq p$. We have to show that the submodule $\gm_{I^v_i(\f)}$ is $\Ad(\fK_q)$-invariant for any $q \leq i \leq \ell_v$. Since every submodule $\gm_{I^v_i(\f)}$ is $\Ad(\fH)$-invariant, it follows from the very definition of $\fK_q$ (see Proposition \ref{K}) that it is sufficient to show that $\gm_{I^v_i(\f)}$ is $\ad(\gk_q)$-invariant for any $q \leq i \leq \ell_v$. We already know from \eqref{mk} that $[\gk_q,\gm_{\gk_q}^{\perp}]\subset \gm_{\gk_q}^{\perp}$. From condition (ii) in Definition \ref{zsub}, we get $$[I_q^v(\f)I_{j_1}^v(\f)I_{j_2}^v(\f)]=0 \quad \text{ for any $q \leq j_1 < j_2 \leq \ell_v$ }\, .$$ In particular, $Q\big([\gm_{\gk_q},\gm_{I^v_i(\f)}],\gm_{I^v_j(\f)}\big)=0$ for any $q < i,j \leq \ell_v$, $i\neq j$. So, we can conclude that $[\gm_{\gk_q},\gm_{I_i^v(\f)}] \subset \gm_{I_i^v(\f)}$ for any $q < i \leq \ell_v$. \end{proof}

By means of Proposition \ref{propflag}, the following geometric interpretation for the set of submersion directions arises. Given an element $v \in \eS^{\S}(\gk_1)$, moving along the geodesic $\g_v(t)$ is equivalent to shrinking the fibers of the (locally) homogeneous Riemannian submersion associated to $\gk_1$ as in \ref{Riemsub} and to rescaling the base space, while the volume is keeped fixed. \smallskip

The set $\eS^{\S} \subset \S$ of submersion directions has originally raised from the study of the scalar curvature functional $\scal: \eM^{\fG}_1 \rar \bR$, aimed to get results of existence and non-existence for homogeneous Einstein metrics (see e.g. \cite{WZ, B\"o1}). It turns out that it plays a crucial role in studying the asymptotic behavior of the curvature tensor along geodesic rays $\g_v$. More concretely

\begin{theo} Let $v \in \S$ and $\g_v$ the corresponding geodesic ray in $\sM^G_1$. \begin{itemize}
\item[a)] \cite[Thm 5.18]{B\"o1} If $v \in \S \setminus \eS^{\S}$, then $\lim_{t \rar +\infty}\scal(\g_v(t)) \rar -\infty$.
\item[b)] If $v \in \eS^{\S}(\gk_1)$ for a non-toral $\fH$-subalgebra $\gk_1 \subset \gg$, then $\lim_{t \rar +\infty}\big|\Ric(\g_v(t))\big|_{\g_v(t)} \rar +\infty$.
\end{itemize} \label{propgeo} \end{theo}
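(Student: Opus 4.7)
For part (a) I would simply invoke \cite[Thm 5.18]{B\"o1}. The real work is in (b), and the plan is to show that already the block of the Ricci operator on a single well-chosen irreducible summand inside $\gm_{\gk_1}$ blows up along $\g_v$. Fix a good decomposition $\f=(\gm_1,\ldots,\gm_{\ell})$ for $v$, so that $\g_v(t)=\sum_i e^{tv_i}Q_{\gm_i}$. By Lemma \ref{lemBo1} the smallest eigenvalue satisfies $\hat{v}_1<-c<0$, and since $v\in\eS^{\S}(\gk_1)$ the defining condition \eqref{k-sub} gives $\gm_{\gk_1}=\gm_{I^v_1(\f)}$. The assumption that $\gk_1$ is non-toral translates, after $Q$-orthogonal decomposition $\gk_1=\gh\oplus\gm_{\gk_1}$, into the fact that $[\gm_{\gk_1},\gm_{\gk_1}]$ has a non-trivial $Q$-projection onto $\gm_{\gk_1}$, so I would pick indices $i_0,j_0,k_0\in I^v_1(\f)$ with $[i_0j_0k_0]_\f>0$; in particular $[i_0\,I^v_1(\f)\,I^v_1(\f)]_\f>0$.

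The core of the argument is an explicit evaluation of $\ric_{i_0}(\g_v(t))$ via \eqref{ric}. The key point is that if $j\in I^v_a(\f)$ and $k\in I^v_b(\f)$ with $a\neq b$, then by full symmetry of the bracket coefficients one has
\[
[i_0\,j\,k]_\f\;\leq\;[I^v_1(\f)\,I^v_{\min(a,b)}(\f)\,I^v_{\max(a,b)}(\f)]_\f\;=\;0,
\]
where the vanishing is Proposition \ref{propBo1}. Hence only terms with $v_j=v_k=\hat{v}_a$ survive in the two bracket-sums of \eqref{ric}; in the middle sum the exponent simplifies uniformly to $-\hat{v}_1$. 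Combining this with the trace identity \eqref{dbc}, which rewrites $\tfrac{b_{i_0}}{2}-\tfrac{1}{2d_{i_0}}\sum_{j,k}[i_0jk]_\f$ as $c_{i_0}$, a short computation yields
\[
\ric_{i_0}(\g_v(t))=\Big(c_{i_0}+\tfrac{1}{4d_{i_0}}[i_0\,I^v_1(\f)\,I^v_1(\f)]_\f\Big)\,e^{-t\hat{v}_1}+\tfrac{1}{4d_{i_0}}\sum_{a\geq 2}[i_0\,I^v_a(\f)\,I^v_a(\f)]_\f\,e^{t(\hat{v}_1-2\hat{v}_a)}.
\]
All bracket coefficients and Casimir eigenvalues are non-negative, the coefficient of $e^{-t\hat{v}_1}$ is \emph{strictly} positive by the non-toral assumption, and $-\hat{v}_1>c>0$, so $\ric_{i_0}(\g_v(t))\to+\infty$.

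To pass from a single block to the full Ricci norm, I would use that by \eqref{ric} the $\gm_{i_0}\otimes\gm_{i_0}$-block of $\Ric(\g_v(t))$ equals $\l_{i_0}\ric_{i_0}(\g_v(t))\,Q_{\gm_{i_0}}$; evaluating the $g$-norm on a $Q$-orthonormal basis of $\gm_{i_0}$ rescaled by $\l_{i_0}^{-1/2}$ gives $|\Ric(\g_v(t))|_{\g_v(t)}^2\geq d_{i_0}\,\ric_{i_0}(\g_v(t))^2$, which diverges as required.

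The only real obstacle is the bookkeeping step showing that the negative middle sum in \eqref{ric} cannot offset the positive divergent contribution of the third sum. This is precisely the content of Proposition \ref{propBo1}: killing all mixed coefficients $[I^v_1(\f)\,I^v_a(\f)\,I^v_b(\f)]_\f$ with $a\neq b$ forces the surviving diagonal terms to regroup, via \eqref{dbc}, into the non-negative Casimir $c_{i_0}$ plus the strictly positive remainder $(4d_{i_0})^{-1}[i_0\,I^v_1(\f)\,I^v_1(\f)]_\f$ that drives the blow-up.
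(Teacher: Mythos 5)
Your computation of $\ric_{i_0}(\g_v(t))$ is correct and coincides, step for step, with the paper's own proof (same use of Proposition \ref{propBo1} to kill the mixed coefficients, same regrouping via \eqref{dbc}, same final formula), and the passage from the blow-up of one block to the blow-up of $|\Ric(\g_v(t))|_{\g_v(t)}$ is fine. However, there is one genuine error at the point where you translate the hypothesis: it is \emph{not} true that $\gk_1$ being non-toral forces $[\gm_{\gk_1},\gm_{\gk_1}]$ to have a non-trivial $Q$-projection onto $\gm_{\gk_1}$. By Definition \ref{Hsub}, non-toral means $[\gk_1,\gk_1]\not\subset\gh$, and with $\gk_1=\gh+\gm_{\gk_1}$ this bracket also contains the terms $[\gh,\gm_{\gk_1}]\subset\gm_{\gk_1}$. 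So $\gk_1$ can be non-toral with $[\gm_{\gk_1},\gm_{\gk_1}]\subset\gh$, provided $[\gh,\gm_{\gk_1}]\neq\{0\}$; the standard example is $\gh=\gu(1)\subset\gk_1=\su(2)$, where $\gm_{\gk_1}$ is the root space and $[\gm_{\gk_1},\gm_{\gk_1}]=\gh$. In that situation there are no indices $i_0,j_0,k_0\in I^v_1(\f)$ with $[i_0j_0k_0]_{\f}>0$, and your selection of $i_0$ breaks down.

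The repair is immediate and is exactly what the paper does: non-torality of $\gk_1$ is equivalent to the existence of $i_0\in I^v_1(\f)$ with
$2d_{i_0}c_{i_0}+\tfrac12\sum_{j,k\in I^v_1(\f)}[i_0jk]_{\f}>0$,
i.e.\ \emph{either} $c_{i_0}>0$ (coming from $[\gh,\gm_{\gk_1}]\neq\{0\}$) \emph{or} $[i_0\,I^v_1(\f)\,I^v_1(\f)]_{\f}>0$. Your own final expression for $\ric_{i_0}(\g_v(t))$ already exhibits the coefficient of $e^{-t\hat v_1}$ as $c_{i_0}+\tfrac1{4d_{i_0}}[i_0\,I^v_1(\f)\,I^v_1(\f)]_{\f}$, which is strictly positive in either case, so the rest of your argument goes through unchanged once this dichotomy is stated correctly.
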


\begin{proof} Fix $v \in \S$ and a good decomposition $\f$ for $v$. If $v \in \S \setminus \eS^{\S}$, then there exists $\e>0$ and a triple $(i_{\zero},j_{\zero},k_{\zero}) \in I^3$ such that $[i_{\zero}j_{\zero}
k_{\zero}]_{\f}>\e$ and $v_{i_{\zero}}-v_{j_{\zero}}-v_{k_{\zero}}+\hat{v}_1>\e$. Since $\hat{v}_1< 0$ by Lemma \ref{lemBo1}, from \eqref{scalg} we get $$\scal(\g_v(t)) < \frac12\big(b_{\fG/\fH}-\e e^{t\e}\big)e^{-t\hat{v}_1} \rar -\infty$$ and this completes the proof of the first claim.

Let now $\gk_1$ be a non-toral $\fH$-subalgebra of $\gg$ and suppose that $v \in \eS^{\S}(\gk_1)$. Then, if $i \in I_1^v(\f)$, for any $j,k \in I$ it follows from \eqref{crucial} that \beq \begin{gathered} \text{ $[ijk]_{\f}\big(1-e^{t(v_k-v_j)}\big)=0$ for any $t>0$ } \, , \\ \text{$[ijk]_{\f}>0$ only if $j,k \in I_s^v(\f)$ for some $1 \leq s \leq \ell_v$ } \, . \label{condforRic} \end{gathered}\eeq So, for any $i \in I_1^v(\f)$, from \eqref{ric} we get \begin{align*}
\ric_i(\g_v(t)) &\overset{{\color{white} (2.13)}}{=} \frac{b_i}{2}e^{-tv_i}-\frac1{2d_i}\sum_{j,k \in I}[ijk]_{\f}e^{t(v_k-v_i-v_j)}+\frac1{4d_i}\sum_{j,k \in I}[ijk]_{\f}e^{t(v_i-v_j-v_k)} \\
&\overset{\eqref{dbc}}{=} \Big(c_i+\frac1{2d_i}\sum_{j,k \in I}[ijk]_{\f}\Big)e^{-t\hat{v}_1}-\frac1{2d_i}e^{-t\hat{v}_1}\sum_{j,k \in I}[ijk]_{\f}e^{t(v_k-v_j)}+\frac1{4d_i}e^{t\hat{v}_1}\sum_{j,k \in I}[ijk]_{\f}e^{-t(v_j+v_k)} \\
&\,\overset{\eqref{condforRic}}{=}\,\, c_ie^{-t\hat{v}_1}+\frac1{4d_i}e^{t\hat{v}_1}\sum_{\substack{j,k \in I_s^v(\f) \\ 1\leq s \leq \ell_v}}[ijk]_{\f}e^{-2t\hat{v}_s} \\
&\overset{{\color{white} (2.13)}}{=} \frac1{2d_i}\Big(2d_ic_i+\frac12\sum_{j,k \in I_1^v(\f)}[ijk]_{\f}\Big)e^{-t\hat{v}_1}+\frac1{4d_i}\sum_{\substack{j,k \in I_s^v(\f) \\ 2\leq s \leq \ell_v}}[ijk]_{\f}e^{-t(2\hat{v}_s-\hat{v}_1)} \,\, .
\end{align*} Since $\gk_1$ is non toral, there exists $i_{\zero} \in I^v_1(\f)$ such that $$2d_{i_{\zero}}c_{i_{\zero}}+\frac12\sum_{j,k \in I_1^v(\f)}[i_{\zero}jk]_{\f}>0$$ and so the second claim follows. \end{proof}

\begin{remark} To prove the second claim, it is possible to argue also like this. Let $v \in \eS^{\S}(\gk_1)$ for a given non-toral $\fH$-subalgebra $\gk_1$ and $\f \in \eF^{\fG}$ be a good decomposition for $v$. Since $\g_v(t)|_{\fK_1/\fH}=e^{t\hat{v}_1}Q_{I^v_1(\f)}$ and $\hat{v}_1<0$, it follows that the intrinsic sectional curvature of $\fK_1/\fH$ blows up as $t\rar +\infty$. Moreover, from Lemma \ref{totg} and Proposition \ref{propflag}, we know that $\fK_1/\fH$ is totally geodesic in $(\fG/\fH,\g_v(t))$ for any $t>0$ and so also its extrinsic sectional curvature blows up. Then, claim (b) follows directly from \cite[Thm 4]{BLS}. \end{remark}

As a consequence of Theorem \ref{propgeo}, the only way of {\it reaching the boundary of the space $\eM^{\fG}_1$}, moving along a geodesic $\g_v$ while keeping the curvature bounded, is to choose $v \in \eS^{\S}(\gk_1)$ for some toral $\fH$-subalgebra $\gk_1 \subset \gg$. By the way, we stress the fact that this last condition is far form being sufficient.

\begin{example}[Berger spheres] Let $M=\fG=\fSU(2)$. Consider the $\Ad(\fSU(2))$-invariant inner product $Q(A_1,A_2)\={-}\frac12\Tr(A_1{\cdot}A_2)$ on $\su(2)$, the standard $Q$-orthonormal basis $\eB=(X_1,X_2,X_3)$ such that $$[X_1,X_2]=-2X_3 \,\, , \quad [X_2,X_3]=-2X_1 \,\, , \quad [X_3,X_1]=-2X_2$$ and set $\gk:=\vspan(X_1)$. By means of \eqref{Sigma} and \eqref{crucial}, it is easy to check that $\eS^{\S}(\gk)=\{\bar{v}\}$, where the tangent direction $\bar{v}$ is given, with respect to the basis $\eB$, by $$\bar{v}=\left({\scalefont{0.8}\begin{array}{ccc} {-}\frac{\sqrt6}3 \!&\! \!&\! \\ \!&\! \frac{\sqrt6}6 \!&\! \\ \!&\! \!&\! \frac{\sqrt6}6 \end{array}}\right) \,\, .$$ Let us indicate now with $\big(X_1(t)\=e^{\frac{\sqrt6}6t}X_1, X_2(t)\=e^{-\frac{\sqrt6}{12}t}X_2, X_3(t)\=e^{-\frac{\sqrt6}{12}t}X_3\big)$ the $\g_{\bar{v}}(t)$-orthonormal basis for $\su(2)$ obtained by normalizing $\eB$. Then, one can directly check that the curvature tensor $$\Rm(\g_{\bar{v}}(t)): \su(2) \wedge \su(2) \rar \su(2) \wedge \su(2)$$ is diagonal and explicitly given by $$\begin{gathered} \Rm(\g_{\bar{v}}(t))(X_1(t){\wedge}X_2(t))=e^{-\frac23\sqrt6t}X_1(t){\wedge}X_2(t) \,\, , \\ \Rm(\g_{\bar{v}}(t))(X_1(t){\wedge}X_3(t))=e^{-\frac23\sqrt6t}X_1(t){\wedge}X_3(t) \,\, , \\ \Rm(\g_{\bar{v}}(t))(X_2(t){\wedge}X_3(t))=\Big(4e^{-\frac{\sqrt{6}}6t}-3e^{-\frac23\sqrt6t}\Big)X_2(t){\wedge}X_3(t) \,\, .\end{gathered}$$ Hence, we conclude that $\lim_{t\rar+\infty}\big|\Rm(\g_{\bar{v}}(t))\big|_{\g_{\bar{v}}(t)}=0$. Notice that $\g_{\bar{v}}(t)$ comes from the canonical variation of the round metric on $S^3=\fSU(2)$ with respect to the Hopf fibration $S^1 \rar S^3 \rar S^2=\fSU(2)/S^1$ (see \cite[p. 252]{Bes}). When endowed with any such a metric, the $3$-sphere is called a {\it Berger sphere}. \label{Berger} \end{example}

\smallskip

\section{Proofs of Theorem \ref{MAIN} and Theorem \ref{MAIN2}} \label{section4} \setcounter{equation} 0

\subsection{Main results} \hfill \par

Let us consider a sequence $(g^{(n)}) \subset \eM^{\fG}_1$. Then, for every $n \in \bN$ there exist $v^{(n)} \in \S$ and $t^{(n)} >0$, univocally determined, such that $g^{(n)}=\g_{v^{(n)}}(t^{(n)})$.  Since $\S$ is compact, there exist a sequence $(n_i) \subset \bN$ and a direction $v^{(\infty)} \in \S$ such that $v^{(n_i)} \rar v^{(\infty)}$. For the sake of simplicity, in this section we will assume that the whole sequence $(v^{(n)})$ converges to some $v^{(\infty)} \in \S$, which we call {\it limit direction of $(g^{(n)})$}. We also say that $(g^{(n)})$ is {\it divergent} if $t^{(n)} \rar +\infty$. \smallskip

For any $n \in \bN$ we choose a good decomposition $\f^{(n)}=(\gm_1^{(n)},{\dots},\gm_{\ell}^{(n)})$ of $\gm$ for $v^{(n)}$, so that \beq g^{(n)}=\l_1^{(n)}Q_{\gm_1^{(n)}}+{\dots}+\l_{\ell}^{(n)}Q_{\gm_{\ell}^{(n)}} \,\, , \quad \text{ with } \,\,\l_i^{(n)}\=e^{t^{(n)}v_i^{(n)}} \,\, . \label{diag(n)} \eeq Since $v^{(n)}\rar v^{(\infty)}$, we can suppose that the sequence $(\f^{(n)}) \subset \sF^{\fG}$ converges as $n \rar +\infty$ to a good decomposition $\f^{(\infty)}=(\gm_1^{(\infty)},{\dots},\gm_{\ell}^{(\infty)})$ for the limit direction $v^{(\infty)}$ of $(g^{(n)})$. For simplicity of notation, since we do not need to specify the particular choice of $\f^{(n)}$ and $\f^{(\infty)}$, we will write $[ijk]^{(n)}$ and $[ijk]^{(\infty)}$ instead of $[ijk]_{\f^{(n)}}$ and $[ijk]_{\f^{(\infty)}}$, respectively. Being the map $\f \mapsto [ijk]_{\f}$ continuous, it holds that $[ijk]^{(n)} \rar [ijk]^{(\infty)}$ as $n \rar +\infty$. Furthermore, the coefficients introduced in \eqref{b_i} and \eqref{c_i} will be indicated by $b_i^{(n)}$, $c_i^{(n)}$ when they refer to the decomposition $\f^{(n)}$ and by $b_i^{(\infty)}$, $c_i^{(\infty)}$ when they refer to the decomposition $\f^{(\infty)}$, respectively. Again, it holds that $b_i^{(n)} \rar b_i^{(\infty)}$ and $c_i^{(n)} \rar c_i^{(\infty)}$ as $n \rar +\infty$.

From now on, up to passing to a subsequence we will always assume that the decompositions $\f^{(n)}$ are ordered in such a way that \beq v_1^{(n)} \leq v_2^{(n)}\leq {\dots} \leq v_{\ell}^{(n)} \quad \text{ for any $n \in \bN$ } \,\, . \label{goodorder} \eeq For simplicity of notation, we set $I\=\{1,{\dots},\ell\}$, $I_s^{(\infty)} \= I_s^{v^{(\infty)}}(\f^{(\infty)})$ for any $1 \leq s \leq \ell_{v^{(\infty)}}$ and we define the map $r:\{0,{\dots},\ell_{v^{(\infty)}}\} \rar \{0,{\dots},\ell\}$ by imposing the conditions \beq r(0)\=0 \,\, , \quad I_s^{(\infty)}=\{r(s-1)+1,{\dots},r(s)\} \quad \text{ for any } 1 \leq s \leq \ell_{v^{(\infty)}} \,\, . \label{r} \eeq Moreover, we set $I^{(\infty)}_{\geq q} \= \bigcup_{s=q}^{\ell_{v^{(\infty)}}}I^{(\infty)}_s$. Let us fix for each $n \in \bN$ a $Q_{\gm}$-orthonormal $\f^{(n)}$-adapted basis $\big(e^{(n)}_{\a}\big)$ for $\gm$. Since $v^{(n)} \rar v^{(\infty)}$ we can suppose that there exists a $Q_{\gm}$-orthonormal $\f^{(\infty)}$-adapted basis $\big(e_{\a}^{(\infty)}\big)$ for $\gm$ such that $e^{(n)}_{\a} \rar e_{\a}^{(\infty)}$ as $n\rar +\infty$. For the sake of shortness we set \begin{align} &\sec_i\!\big(g^{(n)}\big)\=\sum_{e_{\a}^{(n)},e_{\a'}^{(n)}\in \gm_i^{(n)}}\sec(g^{(n)})(e_{\a}^{(n)}{\wedge}e_{\a'}^{(n)}) \quad \text{ for any $i \in I$ }\,\, , \label{seci}\\ &\sec_{ij}\!\big(g^{(n)}\big)\=\sum_{\substack{e_{\a}^{(n)} \in \gm_i^{(n)}\\e_{\b}^{(n)} \in \gm_j^{(n)}}} \sec(g^{(n)})(e_{\a}^{(n)}{\wedge}e_{\b}^{(n)}) \quad \text{ for any $i, j \in I$, $i <j$ } \,\, . \label{secij}\end{align} From \eqref{coef1}, \eqref{sec1} and \eqref{dcbra} we obtain \begin{align}
\sec_i\!\big(g^{(n)}\big) &= \!\sum_{e_{\a}^{(n)},e_{\a'}^{(n)}\in \gm_i^{(n)}} \!\!\bigg\{\big|[e_{\a}^{(n)},e_{\a'}^{(n)}]_{\gh}\big|_Q^2+\frac14\big|[e_{\a}^{(n)},e_{\a'}^{(n)}]_{\gm_i^{(n)}}\big|_Q^2+\sum_{k\in I\setminus\{i\}}\big|[e_{\a}^{(n)},e_{\a'}^{(n)}]_{\gm_k^{(n)}}\big|_Q^2- \nonumber \\ 
&\phantom{aaaaaaaaaaaaaaaaaaaaaaaaaaaaaaaaaaaaaaaaaaaa}-\frac34\sum_{k\in I\setminus\{i\}}\big|[e_{\a}^{(n)},e_{\a'}^{(n)}]_{\gm_k^{(n)}}\big|_Q^2\frac{\l_k^{(n)}}{\l_i^{(n)}} \bigg\}\frac1{\l_i^{(n)}} \nonumber \\
&= \bigg(d_ic_i^{(n)}+\frac14[iii]^{(n)}+\sum_{k\in I\setminus\{i\}}[iik]^{(n)}-\frac34\sum_{k\in I\setminus\{i\}}[iik]^{(n)}\frac{\l_k^{(n)}}{\l_i^{(n)}}\bigg)\frac1{\l_i^{(n)}} \,\, . \label{seci'}
\end{align} Moreover, from \eqref{coef1} and \eqref{sec2} we get \begin{align}
\sec_{ij}\!\big(g^{(n)}\big) &= \!\sum_{\substack{e_{\a}^{(n)} \in \gm_i^{(n)}\\e_{\b}^{(n)} \in \gm_j^{(n)}}}\!\!\Bigg\{\sum_{k \in I}\big|[e_{\a}^{(n)},e_{\b}^{(n)}]_{\gm_k^{(n)}}\big|_Q^2\frac{\big(\l_i^{(n)}\big)^2+\big(\l_j^{(n)}-\l_k^{(n)}\big)\big(-2\l_i^{(n)}+\l_j^{(n)}+3\l_k^{(n)}\big)}{4\l_i^{(n)}\l_j^{(n)}\l_k^{(n)}}\Bigg\} \nonumber \\
&= \frac14\sum_{k \in I}[ijk]^{(n)}\frac{\l_i^{(n)}}{\l_j^{(n)}\l_k^{(n)}}+\frac14\sum_{k \in I}[ijk]^{(n)}\Big(\frac{\l_j^{(n)}}{\l_k^{(n)}}-1\Big)\Big(-2\frac{\l_i^{(n)}}{\l_j^{(n)}}+1+3\frac{\l_k^{(n)}}{\l_j^{(n)}}\Big)\frac1{\l_i^{(n)}} \,\, . \label{secij'}
\end{align}

Up to passing to a subsequence we assume that each coefficient $\l_i^{(n)}$ is monotonic. Moreover, we introduce the following notation \beq p_{ij}^{(n)} \= \frac{\l_i^{(n)}}{\l_j^{(n)}} \label{pij} \eeq and, up to passing to a further subsequence, we assume that the limits $p_{ij}^{(\infty)} \= \lim_{n}p_{ij}^{(n)} \in [0,+\infty]$ do exist. Moreover, we define \beq a_{ijk}^{(n)} \= [ijk]^{(n)}\big(p_{jk}^{(n)}-1\big)\big(-2p_{ij}^{(n)}+1+3p_{kj}^{(n)}\big) \label{aijk} \eeq and we set $a_{ijk}^{(\infty)} \= \lim_{n}a_{ijk}^{(n)} \in \bR \cup \{\pm\infty\}$ whenever it exists. \smallskip

The next theorem is an intermediate result, which will be crucial in the proof of Theorem \ref{main2}. Nonetheless, we stress that it would be enough for proving Theorem \ref{MAIN}.

\begin{theo} Let us assume that $(g^{(n)}) \subset \eM^{\fG}_1$ is divergent and has bounded curvature. Then, $v^{(\infty)} \in \eS^{\S}(\gk_1)$ for some toral $\fH$-subalgebra $\gk_1$. Moreover, the following necessary conditions hold. \begin{itemize}
\item[\rm{A)}] For any $i\leq j \leq k$ such that $i\in I_1^{(\infty)}$, we have $$[ijk]^{(\infty)}=0 \quad \Longrightarrow \quad \lim_{n\rar+\infty}[ijk]^{(n)}p_{kj}^{(n)}=0 \,\, .$$
\item[\rm{B)}] For any $j,k \in I$ we have $$[I_1^{(\infty)}jk]^{(\infty)}>0 \quad \Longrightarrow \quad p_{kj}^{(\infty)}=1 \,\, .$$
\end{itemize} \label{preltheo} \end{theo}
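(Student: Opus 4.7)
The plan is to split the argument into three steps, each extracting a different consequence of the curvature bound: bounded scalar curvature, bounded Ricci curvature, and bounded sectional curvatures on $\f^{(n)}$-adapted $2$-planes.

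First I would show $v^{(\infty)}\in\eS^{\S}$. Bounded $|\sec(g^{(n)})|$ yields bounded $|\scal(g^{(n)})|$ via the elementary estimate $|\scal|\leq m(m-1)\|\sec\|_\infty$. If $v^{(\infty)}\notin\eS^{\S}$, some triple $(i_\zero,j_\zero,k_\zero)$ would violate \eqref{subdir} with slack $\varepsilon>0$; by continuity of $\f\mapsto[ijk]_\f$ and of the eigenvalues, the violation persists for all large $n$, and inserting it into \eqref{scalg} at $t=t^{(n)}\to+\infty$ produces $\scal(g^{(n)})\to-\infty$ exactly as in the proof of Theorem \ref{propgeo}(a), a contradiction. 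Proposition \ref{propBo1} then identifies $\gk_1:=\gh+\gm_{I_1^{(\infty)}}$ as an $\fH$-subalgebra with $v^{(\infty)}\in\eS^{\S}(\gk_1)$.

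Second, I would prove $\gk_1$ is toral by running the same contradiction strategy at the Ricci level. Bounded sec uniformly bounds $|\Ric(g^{(n)})|_{g^{(n)}}$. Were $\gk_1$ non-toral, the expansion at the end of the proof of Theorem \ref{propgeo}(b) isolates $i_\zero\in I_1^{(\infty)}$ for which $\ric_{i_\zero}(\g_{v^{(\infty)}}(t))\sim C\,e^{-t\hat v_1^{(\infty)}}$ with $C>0$; since $\hat v_1^{(\infty)}<-c(\fG/\fH)<0$ by Lemma \ref{lemBo1}, replacing $(v^{(\infty)},t)$ by $(v^{(n)},t^{(n)})$ and using continuity of $c_i^{(n)}$, $[ijk]^{(n)}$ and $v_i^{(n)}$ gives $\ric_{i_\zero}(g^{(n)})\to+\infty$, contradicting the Ricci bound. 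Thus $\gk_1$ is toral.

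For conditions (A) and (B) I would read off consequences of \eqref{seci'}--\eqref{secij'}. For $i\in I_1^{(\infty)}$ one has $\l_i^{(n)}\to 0$, so the summand $\tfrac{1}{4\l_i^{(n)}}\sum_h a_{ijh}^{(n)}$ in \eqref{secij'} dominates $\sec_{ij}(g^{(n)})$ and boundedness forces $\sum_h a_{ijh}^{(n)}\to 0$. For (B), if $[I_1^{(\infty)}jk]^{(\infty)}>0$ then Proposition \ref{propBo1} places $j,k$ in a single $I_s^{(\infty)}$; picking any $i\in I_1^{(\infty)}$ with $[ijk]^{(\infty)}>0$, the leading order of $a_{ijk}^{(n)}$ factors through $(p_{jk}^{(n)}-1)$ times a positive quantity (with $p_{ij}^{(n)}\to 0$ for $s\geq 2$, and an analogous analysis for $s=1$), whose enforced vanishing gives $p_{kj}^{(\infty)}=1$. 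For (A), where $[ijk]^{(\infty)}=0$ but $[ijk]^{(n)}$ may remain positive while $p_{kj}^{(n)}$ diverges, the quantitative decay $[ijk]^{(n)}p_{kj}^{(n)}\to 0$ is precisely the content of the B\"ohm estimate proved in Appendix \ref{appendixA}.

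The main obstacle is the last step. Since the $a_{ijh}^{(n)}$ may have opposite signs across $h$, the bound $\sum_h a_{ijh}^{(n)}\to 0$ does not a priori force termwise smallness. Disentangling the contributions requires the sign analysis of $(p_{jh}^{(n)}-1)(-2p_{ij}^{(n)}+1+3p_{hj}^{(n)})$ under the ordering \eqref{goodorder}, coupled with positivity of $[ijh]^{(n)}$, which is exactly where the B\"ohm estimate is invoked. A secondary subtlety in Steps 1--2 is that $t^{(n)}\to+\infty$ multiplies differences $v_i^{(n)}-v_i^{(\infty)}$ which need not vanish after multiplication; however, only the dominant exponential orders in \eqref{scalg} and in the Ricci expansion enter, so the limit is taken by dominant-order bookkeeping rather than naive continuity.
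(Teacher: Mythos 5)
Your overall skeleton is right at the start --- bounded scalar curvature forces $v^{(\infty)}\in\eS^{\S}(\gk_1)$ exactly as in the paper, and the key objects are indeed the rescaled sectional curvatures \eqref{limseci}--\eqref{limsecij} and the sums $\sum_k a_{ijk}^{(n)}$ --- but the two steps you leave to other results do not go through as stated. First, the torality of $\gk_1$ cannot be obtained by transferring the Ricci blow-up of Theorem \ref{propgeo}(b) from the limit geodesic $\g_{v^{(\infty)}}(t)$ to the sequence $g^{(n)}=\g_{v^{(n)}}(t^{(n)})$. That computation uses \eqref{condforRic}, which holds exactly for $v^{(\infty)}$ but not for $v^{(n)}$; along the sequence the expansion of $\ric_i(g^{(n)})$ contains the terms $-\tfrac1{2d_i\l_i^{(n)}}\sum_{j,k}[ijk]^{(n)}\big(p_{kj}^{(n)}-1\big)$, which can be arbitrarily negative when $[ijk]^{(n)}\rar 0$ while $p_{kj}^{(n)}\rar+\infty$ and can cancel the positive leading term $c_i^{(n)}/\l_i^{(n)}$. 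Controlling exactly these products is condition (A), so your Step 2 presupposes part of what is being proved; the paper's example on $V_3(\bR^5)$ shows explicitly that the asymptotics of $\ric_i$ along $(g^{(n)})$ and along $\g_{v^{(\infty)}}$ genuinely differ, so ``dominant-order bookkeeping'' does not rescue the transfer. In the paper torality is the \emph{last} thing established (Step 3, via $(\triangledown_i)$ and \eqref{bound5}), only after the termwise control \eqref{bound1} and \eqref{bound4} is in hand.

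Second, and more importantly, the passage from $\sum_k a_{ijk}^{(n)}\rar 0$ to termwise vanishing --- which you correctly identify as the main obstacle and on which both (A) and (B) rest --- is not ``the content of the B\"ohm estimate proved in Appendix \ref{appendixA}''. The appendix proves the bracket estimate \eqref{crucialest} for an abelian subalgebra, which is used only for Proposition \ref{propscaltorest} (and hence in Theorem \ref{main2}); it requires already knowing torality and says nothing about the products $[ijk]^{(n)}p_{kj}^{(n)}$. What actually resolves the cancellation problem is a double downward induction (the claims $P(q)$ and $\hat P(i_1,q)$ in the paper's Steps 1--2): starting from $j=\ell$, the ordering \eqref{goodorder} and the sign dichotomy \eqref{asymp} show that for the largest remaining index every summand $a_{ijk}^{(n)}$ is eventually of one sign, forcing each to vanish, and the inductive hypothesis then fixes the signs of the potentially negative terms at the next index. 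This inductive mechanism is the heart of the proof and is missing from your proposal, so as written the argument has a genuine gap.
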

\begin{proof} From \eqref{scalg} it follows that \begin{align*}
\scal(g^{(n)}) &=\frac12\sum_{i\in I}d_ib_i^{(n)}e^{-t^{(n)}v_i^{(n)}}-\frac14\sum_{i,j,k \in I}[ijk]^{(n)}e^{t^{(n)}(v^{(n)}_i-v^{(n)}_j-v^{(n)}_k)} \\
&\leq \frac14\Big(2b_{\fG/\fH}-\sum_{i,j,k \in I}[ijk]^{(n)}e^{t^{(n)}(v^{(n)}_i-v^{(n)}_j-v^{(n)}_k+v^{(n)}_1)}\Big)e^{-t^{(n)}v^{(n)}_1} \,\, ,
\end{align*} where $b_{\fG/\fH}$ is defined in \eqref{bG/H}. Since by assumption $\scal(g^{(n)})$ is bounded from below, there exists a constant $C>0$ such that \beq \sum_{i,j,k \in I}[ijk]^{(n)}e^{t^{(n)}(v^{(n)}_i-v^{(n)}_j-v^{(n)}_k+v^{(n)}_1)} \leq C \,\, \text{ for any $n \in \bN$ } \,\, . \label{boundscal} \eeq We observe also that if $v^{(\infty)} \in \S \setminus \eS^{\S}$, then \eqref{boundscal} is never satisfied. In fact, in that case we can fix $\e>0$ and a triple $(i_{\zero},j_{\zero},k_{\zero}) \in I^3$ such that $[i_{\zero}j_{\zero}k_{\zero}]^{(n)}>\e$ and $v_{i_{\zero}}^{(n)}-v_{j_{\zero}}^{(n)}-v_{k_{\zero}}^{(n)}+v_1^{(n)}>\e$, so that $$[i_{\zero}j_{\zero}k_{\zero}]^{(n)}e^{t^{(n)}(v^{(n)}_{i_{\zero}}-v^{(n)}_{j_{\zero}}-v^{(n)}_{k_{\zero}}+v^{(n)}_1)} > \e e^{t^{(n)}\e} \rar +\infty \,\, .$$ Then, it holds that $v^{(\infty)} \in \eS^{\S}(\gk_1)$ with $\gk_1 \= \gh + \gm^{(\infty)}_{I^{(\infty)}_1}$ (see Proposition \ref{propBo1}). Since by assumption the sectional curvature is bounded, using \eqref{seci'} and \eqref{secij'}, for any $i,j \in I$ such that $i \in I^{(\infty)}_1$, $i<j$ we get \begin{gather} \sec_i(g^{(n)})\cdot \l_i^{(n)} = d_ic_i^{(n)}+\frac14[iii]^{(n)}+\sum_{k\in I\setminus\{i\}}[iik]^{(n)}-\frac34\sum_{k\in I\setminus\{i\}}[iik]^{(n)}p_{ki}^{(n)} \,\longrightarrow\, 0 \,\, , \label{limseci} \\ \sec_{ij}(g^{(n)})\cdot 4\l_i^{(n)} = \sum_{k \in I}\big([ijk]^{(n)}p_{ik}^{(n)}p_{ij}^{(n)}+a_{ijk}^{(n)}\big) \,\longrightarrow\, 0 \label{limsecij} \end{gather} as $n \rar +\infty$, where $\sec_i(g^{(n)})$, $\sec_{ij}(g^{(n)})$ were defined in \eqref{seci}, \eqref{secij}, respectively, and the coefficients $p_{ij}^{(n)}$, $a_{ijk}^{(n)}$ were introduced in \eqref{pij}, \eqref{aijk}, respectively.

{\it Step 1.} We are going to apply \eqref{limsecij} by restricting ourselves to the case $j \in I^{(\infty)}_{\geq2}$. At first we notice that, since $i \leq r(1) < j$, for any $k \in I$ we have $$2v_i^{(n)}-v_k^{(n)}-v_j^{(n)} \,\,\longrightarrow\,\, 2\hat{v}_1^{(\infty)}-v_k^{(\infty)}-v_j^{(\infty)} \leq \hat{v}_1^{(\infty)}-\hat{v}_2^{(\infty)}<0 \,\, ,$$ where $\hat{v}_i^{(\infty)}$ are the distinct eigenvalues of $v^{(\infty)}$ ordered by size, and so \beq \lim_{n\rar+\infty}[ijk]^{(n)}p_{ik}^{(n)}p_{ij}^{(n)}=0 \quad \text{ for any } i,j,k \in I \text{ such that } i \in I^{(\infty)}_1 , \,  j \in I^{(\infty)}_{\geq2} \,\, . \label{st1inf} \eeq Therefore, from \eqref{limsecij} and \eqref{st1inf} we obtain for any fixed $j \in I^{(\infty)}_{\geq2}$ \beq \lim_{n\rar+\infty} \bigg\{ \sum_{k\in I} a_{ijk}^{(n)}\bigg\}=0\,\, , \quad \text{ for any } i \in I^{(\infty)}_1 \,\, .\label{starj} \tag{$\star_j$} \eeq Notice that, under the assumption $i \in I_1^{(\infty)}$ and $j \in I^{(\infty)}_{\geq2}$, it comes $p_{ij}^{(\infty)}=0$ and so from \eqref{aijk} we directly get the following implications: \beq \begin{aligned}
p_{jk}^{(\infty)}=+\infty \quad &\Longrightarrow \quad a_{ijk}^{(n)} \sim [ijk]^{(n)}p_{jk}^{(n)} \geq 0 \\
p_{jk}^{(\infty)} \in [1,+\infty) \quad &\Longrightarrow \quad a_{ijk}^{(\infty)} = [ijk]^{(\infty)}\big(p_{jk}^{(\infty)}-1\big)\big(1+3p_{kj}^{(\infty)}\big) \geq 0 \\
p_{jk}^{(\infty)} \in (0,1) \quad &\Longrightarrow \quad a_{ijk}^{(\infty)} = -[ijk]^{(\infty)}\big(1-p_{jk}^{(\infty)}\big)\big(1+3p_{kj}^{(\infty)}\big) \leq 0 \quad\quad . \\
p_{jk}^{(\infty)}=0 \quad &\Longrightarrow \quad a_{ijk}^{(n)} \sim -3[ijk]^{(n)}p_{kj}^{(n)} \leq 0
\end{aligned} \label{asymp} \eeq

For any $q \in \{0,1,{\dots},\ell-r(1)-1\}$, we set $j=\ell-q$ and we consider the following claim, which we denote by $P(q)$: the limit $a_{i(\ell-q)k}^{(\infty)}$ exists for any $i \in I_1^{(\infty)}$, $k \in I$ and $a_{i(\ell-q)k}^{(\infty)}=0$.

First, we consider the case $q=0$, i.e. $j=\ell$. From \eqref{goodorder}, we directly get that $p_{\ell k}^{(\infty)} \in [1,+\infty]$. But then, by means of \eqref{asymp} and $(\star_{\ell})$, it follows that $P(0)$ holds.

Let us fix now $0 \leq q \leq \ell-r(1)-2$ and assume that $P(q')$ holds for any $0 \leq q' \leq q$. In particular, this means that $a_{i(\ell-q')k}^{(\infty)}=0$ for any $i\in I_1^{(\infty)}$, $k \in I$ and hence for any $1 \leq q' \leq q$ we have \beq \left\{\begin{array}{ll}
\lim_{n\rar+\infty}[i(\ell{-}q')k]^{(n)}p_{(\ell{-}q')k}^{(n)}=0 & \text{ for any } i \in I^{(\infty)}_1 \, , \,\, k \in I\setminus\{\ell{-}q'\} \text{ such that } [i(\ell{-}q')k]^{(\infty)}=0 \\
p_{(\ell{-}q')k}^{(\infty)}=1 & \text{ for any $k\in I$ such that $[I_1^{(\infty)}(\ell{-}q')k]^{(\infty)}>0$ } \label{jq'}
\end{array}\right. \,\, .\eeq Then, for any $i \in I_1^{(\infty)}$, $k \in I$ we obtain: \begin{itemize}
\item[$\bcdot$] if $p_{(\ell{-}q{-}1)k}^{(\infty)} \in [1,+\infty]$, then, by \eqref{asymp}, we directly get that $a_{i(\ell{-}q{-}1)k}^{(n)}$ is definitely non negative;
\item[$\bcdot$] if $p_{(\ell{-}q{-}1)k}^{(\infty)} \in [0,1)$, then, by \eqref{goodorder}, it follows that there exists $1\leq q' \leq q$ such that $k=\ell{-}q'$ and so \eqref{asymp}, \eqref{jq'} imply that the limit $a_{i(\ell{-}q{-}1)k}^{(\infty)}$ exists and $a_{i(\ell{-}q{-}1)k}^{(\infty)}=0$.
\end{itemize} By means of $(\star_{\ell{-}q{-}1})$, this actually proves that $P(q+1)$ holds. Hence, we proved by induction that $P(q)$ holds for any $0 \leq q \leq \ell-r(1)-1$. In particular this means that $$a_{ijk}^{(\infty)}=0 \quad \text{ for any } i\in I^{(\infty)}_1 \, , \,\, j \in I^{(\infty)}_{\geq2} \, , \,\, k \in I$$ and hence the following two conditions must hold: \begin{align}
\text{ $i\in I^{(\infty)}_1$ , \, $j \in I^{(\infty)}_{\geq2}$ , \, $k \in I$ \, and \, $[ijk]^{(\infty)}=0$ } \quad &\Longrightarrow \quad \lim_{n\rar+\infty}[ijk]^{(n)}p_{jk}^{(n)}=0 \,\, ,\label{bound1} \\
\text{ $j,k \in I^{(\infty)}_{\geq2}$ \, and \, $[I_1^{(\infty)}jk]^{(\infty)}>0$ }
\quad &\Longrightarrow \quad p_{jk}^{(\infty)}=1 \,\, . \label{bound2} \end{align}

{\it Step 2.} We are going to apply \eqref{limsecij} by restricting ourselves to the case $j \in I^{(\infty)}_1$. For the sake of clarity, we set $i_1 \= i$ and $i_2 \= j$. At first we notice that, since $i_1 < i_2 \leq r(1)$, for any $k \in I^{(\infty)}_{\geq2}$ \beq a_{i_1i_2k}^{(n)} \,\sim\, -3[i_1i_2k]^{(n)}p_{ki_2}^{(n)} \,\overset{\eqref{bound1}}{\longrightarrow}\, 0 \,\, . \label{k>r(1)} \eeq Moreover, by changing indexes in \eqref{st1inf}, we get \beq \lim_{n\rar+\infty}[i_1i_2k]^{(n)}p_{i_1k}^{(n)}p_{i_1i_2}^{(n)}=0 \quad \text{ for any } k \in I^{(\infty)}_{\geq2} \,\, . \label{st2inf} \eeq So, from \eqref{limsecij}, \eqref{k>r(1)} and \eqref{st2inf}, we get for any fixed $i_1,i_2 \in I^{(\infty)}_1$, $i_1<i_2$ \beq \lim_{n\rar+\infty}\Bigg\{\sum_{k \in I^{(\infty)}_1}\Big([i_1i_2k]^{(n)}p_{i_1k}^{(n)}p_{i_1i_2}^{(n)}+a_{i_1i_2k}^{(n)}\Big)\Bigg\}=0 \,\, . \label{hatPi1i2} \tag{$\triangle_{i_1i_2}$}\eeq Let us notice that \begin{multline}
\sum_{k \in I^{(\infty)}_1}\Big([i_1i_2k]^{(n)}p_{ik}^{(n)}p_{i_1i_2}^{(n)}+a_{i_1i_2k}^{(n)}\Big)=  \\
=\sum_{k=1}^{i_1}[i_1i_2k]^{(n)}\Bigg(\frac{(p_{i_2i_1}^{(n)}-1)^2(p_{i_1k}^{(n)})^2+2(p_{i_2i_1}^{(n)}+1)p_{i_1k}^{(n)}-3}{p_{i_2i_1}^{(n)}p_{i_1k}^{(n)}}\Bigg)+\sum_{k=i_1+1}^{r(1)}[i_1i_2k]^{(n)}p_{i_1k}^{(n)}p_{i_1i_2}^{(n)}+\sum_{k=i_1+1}^{r(1)}a_{i_1i_2k}^{(n)} \,\, . \label{scomp}
\end{multline} Furthermore, if $k\leq i_1<i_2$, then $p_{i_2i_1}^{(n)}, p_{i_1k}^{(n)} \geq 1$ by \eqref{goodorder} and hence \beq \frac{(p_{i_2i_1}^{(n)}-1)^2(p_{i_1k}^{(n)})^2+2(p_{i_2i_1}^{(n)}+1)p_{i_1k}^{(n)}-3}{p_{i_2i_1}^{(n)}p_{i_1k}^{(n)}}\geq1 \quad \text{ for any } k\leq i_1<i_2 \,\, .  \label{>1} \eeq

For any $i_1 \in \{1,{\dots},r(1){-}1\}$ and for any $q \in \{0,{\dots},r(1){-}i_1{-}1\}$, we set $i_2=r(1){-}q$ and we consider the following claim, which we denote by $\hat{P}(i_1,q)$: the limit $a_{i_1(r(1){-}q)k}^{(\infty)}$ exists for any $k \in \{i_1{+}1,{\dots},r(1)\}$ and $a_{i(r(1){-}q)k}^{(\infty)}=0$. 

First, we are going to prove that $\hat{P}(i_1,0)$ holds for any $1\leq i_1 \leq r(1){-}1$. By the very definition \eqref{aijk}, it follows that each $a_{i_1r(1)k}^{(n)}$, with $i_1{+}1 \leq k \leq r(1)$ is definitely non negative. Hence, by applying $(\triangle_{i_1r(1)})$ and \eqref{scomp}, we get the claim.

Let us fix now $1 \leq i_1 \leq r(1){-}1$ and $0\leq q \leq r(1){-}i{-}2$ and assume that $\hat{P}(i_1,q')$ holds for any $0 \leq q' \leq q$. By means of $(\triangle_{i_1(r(1){-}q')})$ and \eqref{scomp}, we get $a_{i_1(r(1){-}q')k}^{(\infty)}=0$ for any $i_1{+}1\leq k\leq r(1)$. Again, for any $i_1{+}1\leq k\leq r(1)$, we have: \begin{itemize}
\item[$\bcdot$] if $p_{(r(1){-}q{-}1)k}^{(\infty)} \in [1,+\infty]$, then, by the very definition \eqref{aijk}, we directly get that $a_{i_1(r(1){-}q{-}1)k}^{(n)}$ is definitely non negative;
\item[$\bcdot$] if $p_{(r(1){-}q{-}1)k}^{(\infty)} \in [0,1)$, then, by \eqref{goodorder}, it follows that there exists $1\leq q' \leq q$ such that $k=r(1){-}q'$ and so the limit $a_{i_1(r(1){-}q{-}1)k}^{(\infty)}$ exists and $a_{i_1(r(1){-}q{-}1)k}^{(\infty)}=0$.
\end{itemize} By means of $(\triangle_{i(\ell{-}q{-}1)})$, this actually proves that $\hat{P}(i_1,q{+}1)$ holds. Hence, we proved by induction that $\hat{P}(i,q)$ holds for any $1\leq i_1 \leq r(1)-1$, $0\leq q \leq r(1){-}i_1{-}1$. In particular by \eqref{scomp} we obtain \begin{align*} 
\eqref{hatPi1i2} \quad &\Longleftrightarrow \quad \left\{\begin{array}{ll}
\displaystyle \lim_{n\rar+\infty}[i_ii_2k]^{(n)}\Bigg(\frac{(p_{i_2i_1}^{(n)}-1)^2(p_{i_1k}^{(n)})^2+2(p_{i_2i_1}^{(n)}+1)p_{i_1k}^{(n)}-3}{p_{i_2i_1}^{(n)}p_{i_1k}^{(n)}}\Bigg)=0 & \, , \,\, 1\leq k <i_1 \\
\displaystyle \lim_{n\rar+\infty}[i_1i_1i_2]^{(n)}p_{i_2i_1}^{(n)}=0 \\
\displaystyle \lim_{n\rar+\infty}[i_1i_2k]^{(n)}p_{i_1k}^{(n)}p_{i_1i_2}^{(n)}=0 & \, , \,\, i_1{+}1\leq k \leq r(1) \\
a_{i_1i_2k}^{(\infty)}=0 & \, , \,\, i_1{+}1\leq k \leq r(1) 
\end{array}\right. \\
\quad\quad\quad\quad\,\, &\Longrightarrow \;\quad \left\{\begin{array}{ll}
\displaystyle \lim_{n\rar+\infty}[i_1i_2k]^{(n)}=0 & \, , \,\, 1 \leq k \leq i_1 \\
\displaystyle \lim_{n\rar+\infty}[i_1i_1i_2]^{(n)}p_{i_2i_1}^{(n)}=0 \\
\displaystyle \lim_{n\rar+\infty}[i_1i_2k]^{(n)}\big(p_{i_2k}^{(n)}-1\big)=0 & \, , \,\, i_1{+}1\leq k \leq r(1) \\
\end{array}\right. \,\, . \end{align*} Therefore, we get \begin{align}
&\lim_{n\rar+\infty}[i_1i_2i_3]^{(n)}p_{i_3i_2}^{(n)}=0 \quad \text{ for any } i_1,i_2,i_3 \in I^{(\infty)}_1 \, , \,\, i_1 \leq i_2 < i_3 \,\, . \label{bound4}
\end{align}

{\it Step 3.} We are going to apply \eqref{limseci}. Notice that, by changing indexes in \eqref{bound1}, it holds \beq \lim_{n\rar+\infty}[iik]^{(n)}p_{ki}^{(n)}=0 \quad \text{ for any $i \in I^{(\infty)}_1$, $k\in I^{(\infty)}_{\geq2}$ } \,\, . \label{st3inf} \eeq Therefore from \eqref{limseci} and \eqref{st3inf} we directly get \beq \lim_{n\rar+\infty}\Bigg\{\sum_{k \in I^{(\infty)}_1\setminus\{i\}}[iik]^{(n)}\bigg(p_{ki}^{(n)}-\frac43\bigg)\Bigg\} = \frac43d_ic_i^{(\infty)}+\frac13[iii]^{(\infty)} \,\, , \quad i \in I^{(\infty)}_1 \,\, . \label{hatPi} \tag{$\triangledown_i$} \eeq By applying \eqref{bound4} it follows that for any $i \in I_1^{(\infty)}$ all the summands inside the curly brackets in the left-hand side of \eqref{hatPi} are infinitesimal or definitely non positive, while all the summands in the right-hand side are non negative. Hence, it holds necessarily \beq c_{i_1}^{(\infty)}=0 \,\, , \quad [i_1i_1i_2]^{(\infty)}=0 \quad \text{ for any $i_1, i_2 \in I^{(\infty)}_1$ } \,\, . \label{bound5} \eeq

The thesis follows now from \eqref{bound1}, \eqref{bound2}, \eqref{bound4} and \eqref{bound5}. \end{proof}

Next, we aim to extend Theorem \ref{preltheo} by considering not only the most shrinking direction, but all the shrinking directions of $(g^{(n)})$. First, we need the following 

\begin{prop}[\cite{B\"o1}, Lemma 5.55] Assume that there exists a flag of $\fH$-subalgebras $\z=(\gk_1,{\dots},\gk_p)$ such that $v^{(\infty)} \in \sW^\S(\z)$. If $\gk_q$ is toral for some $1\leq q \leq p$, then \beq \scal\big(g^{(n)}\big) \leq \frac12\sum_{i>r(q)}d_ib_i^{(n)}e^{-t^{(n)}v_i^{(n)}}-\frac14\sum_{i,j,k >r(q)}[ijk]^{(n)}e^{t^{(n)}(v_i^{(n)}-v_j^{(n)}-v_k^{(n)})} \,\, , \label{scaltorest} \eeq where the application $r: s \mapsto r(s)$ is defined in \eqref{r}. \label{propscaltorest} \end{prop}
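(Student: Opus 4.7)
The plan is to compute the difference
\[ D^{(n)} \= \scal\!\big(g^{(n)}\big) \,-\, \bigg(\tfrac12\sum_{i>r(q)}d_ib_i^{(n)}e^{-t^{(n)}v_i^{(n)}}-\tfrac14\sum_{i,j,k >r(q)}[ijk]^{(n)}e^{t^{(n)}(v_i^{(n)}-v_j^{(n)}-v_k^{(n)})}\bigg) \]
directly from \eqref{scal} and to show $D^{(n)}\leq0$. Set $L\=r(q)$ and $\lambda_i^{(n)}\=e^{t^{(n)}v_i^{(n)}}$, so that
\[ D^{(n)} \,=\, \tfrac12\sum_{i\leq L}\frac{d_ib_i^{(n)}}{\lambda_i^{(n)}} \,-\, \tfrac14\sum_{\substack{(i,j,k)\\\min(i,j,k)\leq L}}[ijk]^{(n)}\frac{\lambda_i^{(n)}}{\lambda_j^{(n)}\lambda_k^{(n)}}. \]
First I would invoke the trace identity \eqref{dbc} to rewrite $d_ib_i^{(n)}=2d_ic_i^{(n)}+\sum_{j,k\in I}[ijk]^{(n)}$ for each $i\leq L$. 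Then, using the full symmetry of $[ijk]^{(n)}$ under permutations of its three entries, I would symmetrize both the linear sum over $i\leq L$ and the cubic sum over $\min\leq L$ over the three index positions. The outcome is an expression of the form
\[ D^{(n)} \,=\, \sum_{i\leq L}\frac{d_ic_i^{(n)}}{\lambda_i^{(n)}} \,+\, \frac1{12}\!\sum_{\substack{(i,j,k)\\\min\leq L}}\!\![ijk]^{(n)}\,T_n(i,j,k), \]
where $T_n(i,j,k)$ is an explicit algebraic expression in $\lambda_i^{(n)},\lambda_j^{(n)},\lambda_k^{(n)}$ depending only on how many of $i,j,k$ lie in $\{1,\dots,L\}$.

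Next I would analyze the sign of $T_n$ according to the number of ``small'' indices. For triples with exactly one index $\leq L$ (say $i\leq L$ and $j,k>L$) a short computation gives the unconditional identity
\[ T_n(i,j,k) \,=\, -\,\frac{(\lambda_j^{(n)}-\lambda_k^{(n)})^2 + (\lambda_i^{(n)})^2}{\lambda_i^{(n)}\lambda_j^{(n)}\lambda_k^{(n)}} \,\leq\, 0, \]
so these triples contribute non-positively to $D^{(n)}$ for every $n$. For the remaining triples, with two or three small indices, $T_n$ has no definite sign, and together with the non-negative Casimir term $\sum_{i\leq L}d_ic_i^{(n)}/\lambda_i^{(n)}$ they must be dealt with via the torality hypothesis.

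The torality $[\gk_q,\gk_q]\subset\gh$, combined with $[\gk_q,\gm_{\gk_q}^\perp]\subset\gm_{\gk_q}^\perp$ from \eqref{mk} and the $Q$-orthogonality $\gh\perp\gm$, forces $[\gm_{\gk_q},\gm_{\gk_q}]_\gm=\{0\}$, $[\gm_{\gk_q},\gm_{\gk_q}^\perp]\cap\gm_{\gk_q}=\{0\}$, and $[\gh,\gm_{\gk_q}]=\{0\}$. Since $v^{(\infty)}\in\eS^\S(\z)$ ensures that $\gm_{\gk_q}$ is precisely the sum of the blocks $\gm_i^{(\infty)}$ with $i\leq L$, these Lie-theoretic identities translate into $c_i^{(\infty)}=0$ for $i\leq L$ and $[ijk]^{(\infty)}=0$ whenever at least two of $i,j,k$ are $\leq L$. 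The limit identities in turn eliminate, in $D^{(n)}$, exactly the contributions whose sign is not controlled by the elementary inequality above.

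The main obstacle is to promote these limit vanishings to the pointwise statement $D^{(n)}\leq0$ at every $n$. The intended route is to exploit the non-uniqueness of the good decomposition $\f^{(n)}$ for $g^{(n)}$ (cf.\ Section~\ref{section2}, in the non-monotypic case) in order to arrange $\gm_i^{(n)}\subset\gm_{\gk_q}$ for $i\leq L$ and $\gm_j^{(n)}\subset\gm_{\gk_q}^\perp$ for $j>L$, which, by the bracket identities above, makes the Casimir terms and the two- and three-small-index triples vanish identically and not only in the limit. Once this adaptation is in place, the Case~3 contribution alone is manifestly non-positive, and the inequality \eqref{scaltorest} follows. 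Verifying that such an adapted $\f^{(n)}$ can always be selected along the sequence (using the convergence $v^{(n)}\to v^{(\infty)}$ and the ordering \eqref{goodorder}) is the key technical point.
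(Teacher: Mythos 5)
Your reduction is sound as far as it goes: the use of \eqref{dbc}, the symmetrization of the cubic sum, and the computation showing that each unordered triple with exactly one index $\leq r(q)$ contributes $-[ijk]^{(n)}\big((\l_i^{(n)})^2+(\l_j^{(n)}-\l_k^{(n)})^2\big)\big/\big(2\l_i^{(n)}\l_j^{(n)}\l_k^{(n)}\big)\leq 0$ are all correct, and you have located the difficulty in exactly the right place. The gap is in your resolution of that difficulty. A good decomposition for $g^{(n)}$ must \emph{diagonalize} $g^{(n)}$, so the only freedom in the choice of $\f^{(n)}$ is a rotation inside the eigenspaces of the endomorphism $A_{g^{(n)}}$; when equivalent isotropy submodules straddle $\gm_{\gk_q}$ and $\gm_{\gk_q}^{\perp}$ (the generic situation, cf. the Stiefel example where $\gm_2\simeq\gm_3\simeq\gm_4$), those eigenspaces can be genuinely tilted with respect to the splitting $\gm_{\gk_q}\oplus\gm_{\gk_q}^{\perp}$, and then \emph{no} good decomposition for $g^{(n)}$ is adapted to $\gk_q$. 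The hypotheses only force the tilting to disappear in the limit, and conditions (A), (B) of Theorem \ref{preltheo} exist precisely to quantify the rate at which it does so. Consequently the Casimir terms $d_ic_i^{(n)}/\l_i^{(n)}$ and the triples with two or three small indices do not vanish for finite $n$; and since they carry weights of order $1/\l_i^{(n)}$ while the structure constants converge to $0$ at an a priori unspecified rate, their limit vanishing gives no pointwise control on $D^{(n)}$. Your ``key technical point'' is therefore not merely unverified: it is false in general.

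The missing ingredient is quantitative. The paper proves in Appendix \ref{appendixA} (by a delicate induction that produces commuting directions inside a maximal torus and contradicts its maximality --- this is where torality is genuinely used) the uniform estimate \eqref{crucialest}, which yields $\sum_{j,k\leq r(q)}[ijk]^{(n)}\leq C\sum_{j\leq r(q),\,k>r(q)}[ijk]^{(n)}$ for all $i\leq r(q)$ and all $n$ large, with $C$ independent of $n$. The uncontrolled positive contributions are then absorbed by the negative cross terms $-\tfrac12\,[ijk]^{(n)}e^{t^{(n)}(v_k^{(n)}-v_j^{(n)})}$ with $j\leq r(q)<k$, which grow at least like $e^{t^{(n)}\e}$ and hence eventually dominate the constant $C$. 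Note that this only establishes \eqref{scaltorest} for $n$ sufficiently large (which is all that is used later), and it relies essentially on the divergence $t^{(n)}\rar+\infty$, whereas you aim at an identity-level inequality valid for every $n$. Without a substitute for \eqref{crucialest} along these lines, your argument cannot be completed.
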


Since the estimate \eqref{scaltorest} plays a fundamental role in the proof of our main results, we present a proof of Proposition \ref{propscaltorest} in Appendix \ref{appendixA}. \smallskip

Let us consider $p \in \{1,{\dots},\ell_{v^{(\infty)}}{-}1\}$ in such a way that $\l_{r(p-1)+1}^{(n)}$ is bounded and $\l_{r(p)+1}^{(n)} \rar +\infty$. We set $I^{\rm gb}\=\cup_{q=1}^pI^{(\infty)}_q=\{1,{\dots},r(p)\}$ and we call it {\it index set of the generalized bounded eigenvalues of $(g^{(n)})$}. This name is due to the fact that for any $i \in I$, if  $\l_i^{(n)}$ is bounded then $i \in I^{\rm gb}$. Notice that it can happen that $\l_i^{(n)}\to+\infty$ for some $i \in I^{\rm gb}$.

Let also $I^{\rm sh}\=\{1,{\dots},\wt{r}\} \subsetneq I$ be the {\it index set of the shrinking eigenvalues of $(g^{(n)})$}, i.e. $\l_{\wt{r}}^{(n)} \rar 0$ and $\l_{\wt{r}+1}^{(n)}$ is bounded away from zero. We define then \beq \begin{gathered} \gk_1\=\gh+\gm^{(\infty)}_{I^{(\infty)}_1} \,\, , \quad \gk_2\=\gk_1+\gm^{(\infty)}_{I^{(\infty)}_2} \,\, , \quad {\dots} \quad , \quad \gk_{p-1}\=\gk_{p-2}+\gm^{(\infty)}_{I^{(\infty)}_{p-1}} \,\, , \\ \gl'\=\gk_p\= \gk_{p-1}+\gm^{(\infty)}_{I^{(\infty)}_p}=\gh + \sum_{i \in I^{\rm gb}}\gm^{(\infty)}_i \label{flaginfty} \end{gathered} \eeq and also \beq \gl \= \gh + \sum_{i \in I^{\rm sh}}\gm^{(\infty)}_i \,\, . \label{kinfty} \eeq  Notice that it necessary holds that $r(p-1) \leq \wt{r} \leq r(p)$, and hence $\gk_{p-1} \subset \gl \subset \gl'$. \smallskip

We are ready to prove our main result. Notice that both Theorem \ref{MAIN} and Theorem \ref{MAIN2} are consequences of the following

\begin{theorem} The set $\z\=(\gk_1,{\dots},\gk_{p-1},\gl')$ defined in \eqref{flaginfty} is a flag of $\fH$-subalgebras of $\gg$ and $v^{(\infty)} \in \eS^{\S}(\z)$. Moreover, the subspace $\gl$ defined in \eqref{kinfty} is a toral $\fH$-subalgebra of $\gg$ and the following conditions hold. \begin{itemize}
\item[\rm{A)}] For any $i\leq j \leq k$ such that $i\in I^{\rm sh}$, we have $$[ijk]^{(\infty)}=0 \quad \Longrightarrow \quad \lim_{n\rar+\infty}[ijk]^{(n)}p_{kj}^{(n)}=0 \,\, .$$
\item[\rm{B)}] For any $j,k \in I$ we have $$[I^{\rm sh}jk]^{(\infty)}>0 \quad \Longrightarrow \quad \lim_{n\rar+\infty}p_{kj}^{(n)}=1 \,\, .$$
\end{itemize} Finally, if $\gl'$ is toral, e.g. if $\gl=\gl'$, then $\displaystyle \lim_{n\rar+\infty}\scal\!\big(g^{(n)}\big)\leq0$. \label{main2} \end{theorem}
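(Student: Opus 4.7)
The plan is to proceed by induction on the block structure of the limit direction $v^{(\infty)}$, using Theorem \ref{preltheo} as the base case and Proposition \ref{propscaltorest} as the engine that converts torality of the current $\fH$-subalgebra into the scalar-curvature input needed for the next step. The base case (already furnished by Theorem \ref{preltheo}) gives $\gk_1$ toral, $v^{(\infty)} \in \eS^{\S}(\gk_1)$, and conditions A, B over $I_1^{(\infty)}$.

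For the inductive step, I would assume that $(\gk_1,\ldots,\gk_q)$ is a flag of toral $\fH$-subalgebras, that $v^{(\infty)} \in \eS^{\S}(\gk_1,\ldots,\gk_q)$, and that conditions A, B hold for all indices in $I_1^{(\infty)}\cup\cdots\cup I_q^{(\infty)}$. Torality of $\gk_q$ activates \eqref{scaltorest}; factoring out $e^{-t^{(n)}v_{r(q)+1}^{(n)}}$ from the right-hand side and using that $\scal(g^{(n)})\cdot e^{t^{(n)}v_{r(q)+1}^{(n)}}$ is bounded (since $\scal$ is bounded by the sectional curvature assumption and $\l_{r(q)+1}^{(n)} = e^{t^{(n)}v_{r(q)+1}^{(n)}}$ is either infinitesimal or bounded) then yields the analogue of \eqref{boundscal}:
\[ \sum_{i,j,k > r(q)} [ijk]^{(n)}\exp\!\bigl(t^{(n)}(v_i^{(n)}-v_j^{(n)}-v_k^{(n)}+v_{r(q)+1}^{(n)})\bigr) \leq C. \]
This upgrades at once to condition (ii) of Definition \ref{zsub} for $r=q+1$ and gives $v^{(\infty)} \in \eS^{\S}(\gk_1,\ldots,\gk_q,\gk_{q+1})$ with $\gk_{q+1}:=\gk_q+\gm^{(\infty)}_{I_{q+1}^{(\infty)}}$. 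Whenever $\hat{v}_{q+1}^{(\infty)}<0$, I would then rerun the three-step sectional-curvature analysis of the proof of Theorem \ref{preltheo} verbatim, with $I_{q+1}^{(\infty)}$ playing the role of $I_1^{(\infty)}$: triples with at least one index in an already processed block are absorbed by the inductive conditions A, B; the bounds \eqref{limseci}, \eqref{limsecij} continue to hold pointwise; and the resulting Casimir identity forces $c_i^{(\infty)}=0$ and $[iii]^{(\infty)}=0$ for $i \in I_{q+1}^{(\infty)}$, which is torality of $\gk_{q+1}$ together with conditions A and B over $I_{q+1}^{(\infty)}$.

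Iterating through all blocks with strictly negative limit eigenvalue produces torality of $\gk_1,\ldots,\gk_{p-1}$, conditions A, B over $I^{\rm sh}$, and torality of $\gl$ (which equals either $\gk_{p-1}$ or $\gk_p$ depending on the sign of $\hat{v}_p^{(\infty)}$). One further iteration uses torality of $\gk_{p-1}$ to establish the submersion inequality at level $r=p$, yielding $v^{(\infty)} \in \eS^{\S}(\z)$ with $\z=(\gk_1,\ldots,\gk_{p-1},\gl')$; the torality of $\gl'$ is accessible only when $\hat{v}_p^{(\infty)}<0$, which is exactly the situation $\gl=\gl'$. Finally, assuming $\gl'$ toral, I would apply Proposition \ref{propscaltorest} once more with $q=p$: for $i>r(p)$ one has $v_i^{(\infty)}>0$, so the first sum on the right-hand side of \eqref{scaltorest} tends to $0$, while the second sum is non-negative and enters with a minus sign, giving $\limsup_n \scal(g^{(n)})\leq 0$.

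The principal obstacle will be the faithful transfer of the three-step sectional-curvature argument of the proof of Theorem \ref{preltheo} to the restricted index set $\{i>r(q)\}$: the delicate bookkeeping of the quotients $p_{ij}^{(n)}$ and the coefficients $a_{ijk}^{(n)}$ appearing in the asymptotic analysis of $\sec_{ij}(g^{(n)})\cdot 4\l_i^{(n)}$ has to be reorganized so that triples involving already processed blocks contribute only terms controlled by the inductive conditions A, B, while the ordering \eqref{goodorder} together with the bounded curvature hypothesis must continue to force every $a_{ijk}^{(n)}$ of undetermined sign to vanish in the limit; the borderline case $\hat{v}_p^{(\infty)}=0$ is particularly delicate and accounts for the general failure of torality of $\gl'$.
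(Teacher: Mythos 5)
Your proposal follows essentially the same route as the paper: induction over the blocks of $v^{(\infty)}$ with Theorem \ref{preltheo} as base case, Proposition \ref{propscaltorest} converting torality of $\gk_q$ into the scalar-curvature bound that produces the next $\fH$-subalgebra, a rerun of Steps 1--3 of the proof of Theorem \ref{preltheo} on each shrinking block, and a final application of \eqref{scaltorest} with $q=p$ for the scalar curvature claim. The only cosmetic deviations are that you prove the last claim directly (the first sum in \eqref{scaltorest} tends to $0$ because $\l_i^{(n)}\geq\l_{r(p)+1}^{(n)}\rar+\infty$, not because $v_i^{(\infty)}>0$, which may fail) rather than by contradiction, and your parenthetical that $\gl$ equals $\gk_{p-1}$ or $\gk_p$ slightly overstates, since a priori $r(p-1)<\wt{r}<r(p)$ is possible and the final pass must run the index up to $\wt{r}$; neither affects the argument.
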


\begin{proof} If $p=1$, i.e. if $\l_{r(1)+1}^{(n)}\rar +\infty$, then the first part of the theorem coincide with the statement of Theorem \ref{preltheo}. Let us suppose then that $p>1$. If $p=2$, one can skip the next part of the proof.

We suppose now that $p>2$. For any $q \in \{1,{\dots},p-1\}$, we consider the following claim, which we denote by $\tilde{P}(q)$: $\gk_q$ is a toral $\fH$-subalgebra, $v^{(\infty)} \in \sW^{\S}(\gk_1,{\dots},\gk_q)$ and both (A), (B) hold after having replaced the index set $I^{\rm sh}$ with $I^{(\infty)}_q$.

Notice that $\tilde{P}(1)$ follows directly from Theorem \ref{preltheo}. Let us fix now $1\leq q \leq p-2$ and assume that $\tilde{P}(q')$ holds for any $1 \leq q' \leq q$. From \eqref{scaltorest}, it follows that \begin{align*}
\scal\!\big(g^{(n)}\big) &\leq \frac12\sum_{i>r(q)}d_ib_i^{(n)}e^{-t^{(n)}v_i^{(n)}}-\frac14\sum_{i,j,k >r(q)}[ijk]^{(n)}e^{t^{(n)}(v_i^{(n)}-v_j^{(n)}-v_k^{(n)})} \\
&\leq \frac14\bigg(2\sum_{i>r(q)}d_ib_i^{(n)}-\sum_{i,j,k>r(q)}[ijk]^{(n)}e^{t^{(n)}\big(v^{(n)}_i-v^{(n)}_j-v^{(n)}_k+v^{(n)}_{r(q)+1}\big)}\bigg)\frac1{\l_{r(q)+1}^{(n)}}
\end{align*} and so, since by assumption $\scal\!\big(g^{(n)}\big)$ is bounded from below, there exists necessarily $C>0$ such that $$\sum_{i,j,k>r(q)}[ijk]^{(n)}e^{t^{(n)}\big(v^{(n)}_i-v^{(n)}_j-v^{(n)}_k+v^{(n)}_{r(q)+1}\big)} \leq C \,\, \text{ for any $n \in \bN$ } \,\, . \label{boundscal2}$$ Then, by arguing as at the beginning of the proof of Theorem \ref{preltheo}, we directly get \beq i,j,k>r(q) \, , \,\, [ijk]^{(\infty)}>0 \quad \Longrightarrow \quad v_i^{(\infty)}-v_j^{(\infty)}-v_k^{(\infty)}+\hat{v}_{q+1}^{(\infty)}\leq0 \,\, .\label{k2} \eeq As a consequence $\gk_{q+1}$ is an $\fH$-subalgebra of $\gg$ and $v^{(\infty)}\in\sW^{\S}(\gk_1,{\dots},\gk_{q+1})$. Since $\l_{r(q+1)}^{(n)} \rar 0$ as $n \rar +\infty$, for any $i,j \in I$ such that $i \in I^{(\infty)}_{q+1}$, $i<j$ it follows that \begin{gather} \sec_i(g^{(n)})\cdot \l_i^{(n)} = d_ic_i^{(n)}+\frac14[iii]^{(n)}+\sum_{k\in I\setminus\{i\}}[iik]^{(n)}-\frac34\sum_{k\in I\setminus\{i\}}[iik]^{(n)}p_{ki}^{(n)} \,\longrightarrow\, 0 \,\, , \label{limseci2} \\ \sec_{ij}(g^{(n)})\cdot 4\l_i^{(n)} = \sum_{k\in I}\big([ijk]^{(n)}p_{ik}^{(n)}p_{ij}^{(n)}+a_{ijk}^{(n)}\big) \,\longrightarrow\, 0 \,\, , \label{limsecij2} \end{gather} where $\sec_i(g^{(n)})$ and $\sec_{ij}(g^{(n)})$ are defined in \eqref{seci} and \eqref{secij}, respectively, and the coefficients $a_{ijk}^{(n)}$ were introduced in \eqref{aijk}. So, one can apply mutatis mutandis Step 1, Step 2 and Step 3 already seen in the proof of Theorem \ref{preltheo} to conclude that $\tilde{P}(q+1)$ holds. Hence, it follows by induction that $\tilde{P}(q)$ holds for any $1 \leq q \leq p-1$.

From now on, it does not matter if $p=2$ or $p>2$. Since $\gk_{p-1}$ is toral and $\l_{r(p-1)+1}^{(n)}$ is bounded, from \eqref{scaltorest} it follows that $\gl'$ is an $\fH$-subalgebra of $\gg$ and $v^{(\infty)} \in \eS^{\S}(\gk_1,{\dots},\gl')$. Moreover, by repeating once again Step 1, Step 2 and Step 3 letting the index $i$ run from $1$ to $\wt{r}$, one can prove that $\gl$ is a toral subalgebra and that both conditions (A), (B) hold true.

Finally, for the proof of the last claim, we do not assume anymore that $p>1$, i.e. we allow $p$ to be $1$. Let us suppose by contradiction that $\gl'$ is toral and $\scal(g^{(n)})>\d$ definitely, for some $\d>0$. By \eqref{scaltorest} it holds that for any $n$ large enough $$\frac12\sum_{i>r(p)}d_ib_i^{(n)}e^{-t^{(n)}v_i^{(n)}}-\frac14\sum_{i,j,k >r(p)}[ijk]^{(n)}e^{t^{(n)}(v_i^{(n)}-v_j^{(n)}-v_k^{(n)})}>\d \,\, .$$ Hence, there exists a constant $C'>0$ such that \beq 4\d\l_{r(p)+1}^{(n)}+\sum_{i,j,k >r(p)}[ijk]^{(n)}e^{t^{(n)}(v_i^{(n)}-v_j^{(n)}-v_k^{(n)}+v_{r(p)+1}^{(n)})}<C' \quad \text{ for any $n \in \bN$ } \label{AAbsurd} \eeq which is clearly absurd, since all the terms in \eqref{AAbsurd} are non negative and $\l_{r(p)+1}^{(n)}$ is unbounded. \end{proof}

\subsection{An explicit example on $V_3(\bR^5)$} \label{V3} \hfill \par

We exhibit an example of a sequence of $\fSO(5)$-invariant metrics on the Stiefel manifold $V_3(\bR^5)$, i.e. the space of orthonormal $3$-frames in $\bR^5$, which diverges with bounded curvature. \smallskip

Let $M=V_3(\bR^5)=\fSO(5)/\fSO(2)$ and consider the inner product $Q(A_1,A_2)\={-}\frac12\Tr(A_1{\cdot}A_2)$ on $\so(5)$. We choose the $Q$-orthonormal basis for $\so(5)$ given by $$\begin{gathered} E\= e^4{\otimes}e_5{-}e^5{\otimes}e_4 \,\, , \quad X_1\= e^2{\otimes}e_3{-}e^3{\otimes}e_2 \,\, , \quad X_2\= e^3{\otimes}e_4{-}e^4{\otimes}e_3 \,\, , \quad X_3\= e^3{\otimes}e_5{-}e^5{\otimes}e_3 \,\, , \\
X_4\= e^2{\otimes}e_4{-}e^4{\otimes}e_2 \,\, , \quad X_5 \= e^2{\otimes}e_5{-}e^5{\otimes}e_2  \,\, , \quad X_6\= e^1{\otimes}e_4{-}e^4{\otimes}e_1 \,\, , \\
X_7 \= e^1{\otimes}e_5{-}e^5{\otimes}e_1 \,\, , \quad X_8 \= e^1{\otimes}e_3{-}e^3{\otimes}e_1 \,\, , \quad X_9 \= e^1{\otimes}e_2{-}e^2{\otimes}e_1 \,\, ,
\end{gathered}$$ where we denoted by $(e_1,{\dots},e_5)$ the standard basis of $\bR^5$ and by $(e^1,{\dots},e^5)$ its dual frame. Then, the isotropy algebra is $\so(2)=\vspan(E)$ and its $Q$-orthogonal reductive complement $\gm$ decomposes into six $\Ad(\fSO(2))$-irreducible submodules: $$\begin{gathered} \gm_1=\vspan(X_1) \,\, , \quad \gm_2=\vspan(X_2,X_3) \,\, , \quad \gm_3=\vspan(X_4,X_5) \,\, , \\ \gm_4=\vspan(X_6,X_7) \,\, , \quad \gm_5=\vspan(X_8) \,\, , \quad \gm_6=\vspan(X_9) \,\, . \end{gathered}$$ Notice that $\gm_2\simeq\gm_3\simeq\gm_4$ are equivalent to the standard representation of $\fSO(2)$, while $\gm_1\simeq\gm_5\simeq\gm_6$ are trivial. One can directly check that the coefficients related to this decomposition are \beq \begin{gathered} c_1=0 \,\, , \quad c_2=c_3=c_4=1 \,\, , \quad c_5=c_6=0 \,\, , \\ b_1=b_2=b_3=b_4=b_5=b_6=6 \,\, , \\ [123]=2 \,\, , \quad [156]=1 \,\, , \quad [245]=2 \,\, , \quad [346]=2 \,\, . \end{gathered}\eeq We define also $$\gk_1\=\gh+\gm_1 \simeq \so(2)\oplus\so(2) \,\, , \quad \gk_2\=\gk_1+\gm_2+\gm_3 \simeq \so(4) \,\, ,$$ which are $\fSO(2)$-subalgebras of $\so(5)$. We highlight that $\gk_1$ is toral, while $\gk_2$ is non-toral.

Let us consider the sequence $(g^{(n)})\subset\sM^{\SO(5)}_1$ defined by \beq g^{(n)}\= \textstyle\frac1{4n^4}Q_{\gm_1}+Q_{\gm_2}+Q_{\gm_3}+nQ_{\gm_4}+2nQ_{\gm_5}+2nQ_{\gm_6} \,\, . \label{seqex} \eeq Notice that the eigenvalues of the tangent direction $v^{(n)}$ are $$\begin{gathered} \textstyle
v_1^{(n)}=-\frac{2+4\log_2n}{\sqrt{20(\log_2n)^2+20\log_2n+6}} \,\, , \quad v_2^{(n)}=v_3^{(n)}=0 \,\, , \\ \textstyle
v_4^{(n)}=\frac{\log_2n}{\sqrt{20(\log_2n)^2+20\log_2n+6}} \,\, , \quad v_5^{(n)}=v_6^{(n)}=\frac{1+\log_2n}{\sqrt{20(\log_2n)^2+20\log_2n+6}}
\end{gathered}$$ and so $v^{(n)} \in \eS^{\S}(\gk_1)$, but $v^{(n)} \notin \eS^{\S}(\gk_1,\gk_2)$. From \eqref{expW} it follows that $(g^{(n)})$ lies in the space $\eM^{\fG}_1(\gk_1)$ of unit volume $\gk_1$-submersion metrics. One can directly check that the Ricci operator of $g^{(n)}$ is diagonal, with eigenvalues $$\begin{gathered}\textstyle \ric_1\!\big(g^{(n)}\big)=\frac{8n^2+1}{32n^6} \,\, , \quad \ric_2\!\big(g^{(n)}\big)=\ric_3\!\big(g^{(n)}\big)=\frac{14n^4+2n^2-1}{8n^4} \,\, , \\
\textstyle \ric_4\!\big(g^{(n)}\big)=-\frac{3n^2-6n+1}{2n^2} \,\, , \quad \ric_5\!\big(g^{(n)}\big)=\ric_6\!\big(g^{(n)}\big)=\frac{48n^6+48n^5-16n^4-1}{32n^6} \,\, .
\end{gathered}$$ By \cite[Thm 4]{BLS} it follows that $(g^{(n)})$ has bounded curvature. For the sake of thoroughness, we provide in Appendix \ref{appendixA} the explicit expression of all the components of the curvature operator $\Rm(g^{(n)})$.

This example shows that in some sense Theorem \ref{main2} is optimal. In fact, we have \beq p=2 \,\, , \quad I^{(\infty)}_1=I^{(\infty)}_{p-1}=I^{\rm sh}=\{1\} \,\, , \quad I^{(\infty)}_2=I^{(\infty)}_p=\{2,3\} \,\, , \quad I^{\rm gb}=\{1,2,3\} \,\, , \quad I^{(\infty)}_3=\{4,5,6\} \label{conclusion1} \eeq and so $\gl=\gk_1$, $\gl'=\gk_2$. Moreover \beq \textstyle [245]>0 \,\, ,\quad \frac{\l_5^{(n)}}{\l_4^{(n)}}=2\neq1 \,\, . \label{conclusion2}\eeq So, even though $v^{(\infty)} \in \eS^{\S}(\gl,\gl')$ because $$\textstyle v^{(\infty)}_1=-\frac{4}{\sqrt{20}} \,\, , \quad v^{(\infty)}_2=v^{(\infty)}_3=0 \,\, , \quad v^{(\infty)}_4=v^{(\infty)}_5=v^{(\infty)}_6=\frac1{\sqrt{20}} \,\, ,$$ from \eqref{conclusion2} it follows that claim (B) does not hold anymore if one replaces the index set $I^{\rm sh}$ with $I^{\rm gb}$. This means that $(g^{(n)})$ does not approach asymptotically a $\gl'$-submersion metric.

Moreover $$\textstyle \scal\!\big(g^{(n)}\big)=\frac{224n^6+288n^5-32n^4-8n^2-1}{32n^6} \rar 7>0$$ and this shows that {\it it is possible for a sequence of invariant metrics to diverge with bounded curvature and positive scalar curvature bounded away from zero.}

Finally, along the geodesic $\g_{v^{(n)}}(t)$ we have $$\begin{aligned} \scal(\g_{v^{(n)}}(t))=12-2e^{t(v^{(n)}_5-v^{(n)}_4)}-e^{tv^{(n)}_1}-6e^{-tv^{(n)}_4}-6e^{-tv^{(n)}_5}&-{\textstyle\frac12}e^{-t(2v^{(n)}_5-v^{(n)}_1)}- \\ &-2e^{-t(v^{(n)}_4+v^{(n)}_5)}-2e^{-t(v^{(n)}_5-v^{(n)}_4)} \end{aligned}$$ and so $\lim_{t\rar+\infty}\scal(\g_{v^{(n)}}(t))=-\infty$ for any $n \in \bN$. On the other hand, one can directly check that along the limit geodesic $\g_{v^{(\infty)}}(t)$, the Ricci operator is diagonal with eigenvalues $$\begin{gathered} \ric_1(\g_{v^{(\infty)}}(t))=e^{tv^{(\infty)}_1}{+}{\textstyle\frac12}e^{-t(2v^{(\infty)}_4{-}v^{(\infty)}_1)} \,\, , \quad \ric_2(\g_{v^{(\infty)}}(t))=\ric_3(\g_{v^{(\infty)}}(t))=2{-}{\textstyle\frac12}e^{tv^{(\infty)}_1}{+}{\textstyle\frac12}e^{-2tv^{(\infty)}_4} \\ \ric_4(\g_{v^{(\infty)}}(t))=3e^{-tv^{(\infty)}_4}{-}e^{-2tv^{(\infty)}_4} \,\, , \\ \ric_5(\g_{v^{(\infty)}}(t))=\ric_6(\g_{v^{(\infty)}}(t))=3e^{-tv^{(\infty)}_4}{-}e^{-2tv^{(\infty)}_4}{-}{\textstyle\frac12}e^{-t(2v^{(\infty)}_4-v^{(\infty)}_1)} \end{gathered}$$ and so, by applying again \cite[Thm 4]{BLS}, $\big|\Rm(\g_{v^{(\infty)}}(t))\big|_{\g_{v^{(\infty)}}(t)}$ is bounded. We highlight that the limit values of the Ricci eigenvalues along the original sequence $(g^{(n)})$ are $$\begin{gathered} \lim_{n\rar+\infty}\ric_1\!\big(g^{(n)}\big)=0 \,\, , \quad \lim_{n\rar+\infty}\ric_2\!\big(g^{(n)}\big)=\lim_{n\rar+\infty}\ric_3\!\big(g^{(n)}\big)={\textstyle\frac74} \,\, , \\
\lim_{n\rar+\infty}\ric_4\!\big(g^{(n)}\big)=-{\textstyle\frac32} \,\, , \quad \lim_{n\rar+\infty}\ric_5\!\big(g^{(n)}\big)=\lim_{n\rar+\infty}\ric_6\!\big(g^{(n)}\big)={\textstyle\frac32} \,\, ,
\end{gathered}$$ while along the limit geodesic $\g_{v^{(\infty)}}(t)$ $$\begin{gathered} \lim_{t\rar+\infty}\ric_1(\g_{v^{(\infty)}}(t))=0 \,\, , \quad \lim_{t\rar+\infty}\ric_2(\g_{v^{(\infty)}}(t))=\lim_{t\rar+\infty}\ric_3(\g_{v^{(\infty)}}(t))=2 \,\, , \\ \lim_{t\rar+\infty}\ric_4(\g_{v^{(\infty)}}(t))=\lim_{t\rar+\infty}\ric_5(\g_{v^{(\infty)}}(t))=\lim_{t\rar+\infty}\ric_6(\g_{v^{(\infty)}}(t))=0 \,\, . \end{gathered}$$ This actually shows that a diverging sequence $(g^{(n)}) \subset \eM^{\fG}_1$ with bounded curvature and limit direction $v^{(\infty)}$ can develop a different asymptotic behavior with respect to to the geodesic $\g_{v^{(\infty)}}(t)$. \smallskip

Finally, let us mention that in our previous example $\wt{r}=r(p-1)$. It is also easy to exhibit examples where $\wt{r}=r(p)$, e.g. by considering again Berger spheres as in Example \ref{Berger}. However, it is not clear whether it is actually possible to construct a sequence of invariant metrics which diverges with bounded curvature with $r(p-1)<\wt{r}<r(p)$. We highlight that for this to be the case it is necessary that the limit direction $v^{(\infty)}$ admits the eigenvalue $\hat{v}^{(\infty)}_p=0$ and the module $\gm_{I^{(\infty)}_p}$ needs to be $\Ad(\fK_{p-1})$-reducible.

\smallskip

\section{Algebraically collapsed sequences of $\fG$-invariant metrics} \label{section5} \setcounter{equation} 0

In this last section, we are going to apply Theorem \ref{MAIN} to give a characterization of algebraically collapsed sequences of invariant metrics on a given compact homogeneous manifold. In general, this is a major object of interest in the study of equivariant convergence of homogeneous Riemannian spaces. Although we do not investigate here such a topic, we refer to \cite{Heb,BWZ,Lau1,Lau2} for what concerns the theory of convergence of homogeneous Riemannian spaces and to \cite[Sec 9]{BL} for a detailed explication of the phenomenon of {\it algebraic collapse}. \smallskip

Let $M=\fG/\fH$ be a compact, connected and almost effective $m$-dimensional homogeneous space, with $\fG$ and $\fH$ compact Lie groups. We fix $Q$, and hence $\gm$, as in Section \ref{section2}. For the sake of notation, we set $$\mu \in \L^2\gg^* \otimes \gg \,\, , \quad \mu(X,Y) \= [X,Y]$$ and we decompose it by using the $Q$-orthogonal projection onto $\gh$ and $\gm$ as \beq \mu = (\mu|_{\gh \wedge \gg})+\mu_{\gh}+\mu_{\gm} \,\, , \quad \text{ with } \quad \mu_{\gh}: \gm \wedge \gm \rar \gh \,\, , \quad \mu_{\gm}: \gm \wedge \gm \rar \gm \,\, . \eeq

Let now $g \in \eM^{\fG}$ and $\f \in \eF^{\fG}$ be a good decomposition for $g$, i.e. it takes the form \eqref{diag}. We set $Q_{\gh} \= Q|_{\gh\otimes\gh}$. Let also $(e_{\a})$ be a $\f$-adapted $Q_{\gm}$-orthonormal basis for $\gm$ and $(z_{\g})$ be a $Q_{\gh}$-orthonormal basis for $\gh$. Then, the direct sum $Q_{\gh}+g$ is an $\Ad(\fH)$-invariant inner product on the whole Lie algebra $\gg$ with respect to which \beq|\mu|_{Q_{\gh}+g}^2 = \big|(\mu|_{\gh \wedge \gg})\big|_{Q_{\gh}+g}^2+|\mu_{\gh}|_{Q_{\gh}+g}^2+|\mu_{\gm}|_g^2 \,\, . \label{normbracket1} \eeq Notice that \beq \begin{aligned}
\big|(\mu|_{\gh \wedge \gg})\big|_{Q_{\gh}+g}^2 &=\big|(\mu|_{\gh \wedge \gh})\big|_{Q_{\gh}}^2+\sum_{i\in I}\sum_{\substack{e_{\a} \in \gm_i \\ z_{\g} \in \gh}}\Big|{\textstyle \Big[z_{\g},\frac{e_{\a}}{\sqrt{\l_i}}\Big]_{\gm_i}}\Big|_g^2 \\
&=\big|(\mu|_{\gh \wedge \gh})\big|_{Q_{\gh}}^2+\sum_{i\in I}\sum_{\substack{e_{\a} \in \gm_i \\ z_{\g} \in \gh}}\big|[z_{\g},e_{\a}]\big|_Q^2 \\
&=\big|(\mu|_{\gh \wedge \gh})\big|_{Q_{\gh}}^2+\sum_{i\in I}d_ic_i \,\, ,
\end{aligned} \eeq and so the norm $\big|(\mu|_{\gh \wedge \gg})\big|_{Q_{\gh}+g}$ does not depend on $g$. On the other hand \beq \begin{gathered}
|\mu_{\gh}|_{Q_{\gh}+g}^2=\sum_{i,j \in I}\sum_{\substack{e_{\a} \in \gm_i \\ e_{\b} \in \gm_j}}\Big|{\textstyle \Big[\frac{e_{\a}}{\sqrt{\l_i}},\frac{e_{\b}}{\sqrt{\l_j}}\Big]_{\gh}}\Big|_{Q}^2=\sum_{i\in I}\frac1{\l_i}\sum_{e_{\a},e_{\a'}\in \gm_i}\big|[e_{\a},e_{\a'}]_{\gh}\big|_Q^2=\sum_{i\in I}\frac{d_ic_i}{\l_i} \,\, ,\\
|\mu_{\gm}|_{g}^2=\sum_{i,j,k \in I}\sum_{\substack{e_{\a} \in \gm_i \\ e_{\b} \in \gm_j}}\Big|{\textstyle \Big[\frac{e_{\a}}{\sqrt{\l_i}},\frac{e_{\b}}{\sqrt{\l_j}}\Big]_{\gm_k}}\Big|_g^2=\sum_{i,j,k \in I}[ijk]_{\f}\frac{\l_k}{\l_i\l_j} \,\, .
\end{gathered} \label{normbracket2} \eeq

\begin{definition} A sequence $\big(g^{(n)}) \subset \eM^{\fG}$ of $\fG$-invariant metrics on $M$ is said to be {\it algebraically non-collapsed} if there exists $C>0$ such that $$|\mu_{\gh}|_{Q_{\gh}+g^{(n)}}^2+|\mu_{\gm}|_{g^{(n)}}^2<C \quad \text{ for any $n \in \bN$ } \, ,$$ otherwise it is said to be {\it algebraically collapsed}. \end{definition}

Notice that any sequence which lies in a compact subset of $\eM^{\fG}_1$ is never algebraically collapsed. By assuming that the fundamental group $\pi_1(M)$ is finite, the converse assertion also holds true. In fact, we prove now Proposition \ref{MAINcor2} by using Theorem \ref{main2}.

\begin{proof}[Proof of Proposition \ref{MAINcor2}] Since $M$ is connected and the fundamental group $\pi_1(M)$ is finite, up to enlarging the space $\eM^{\fG}$ of invariant metrics, we can assume that the group $\fG$ is connected and semisimple. Let us fix a sequence $(g^{(n)}) \subset \eM^{\fG}_1$ which diverges with bounded curvature. From now until the end of the proof, we adopt the notation introduced in Section \ref{section4}. By Lemma \ref{lemmablue} and Theorem \ref{main2}, we can choose $i_{\zero} \in I^{\rm sh}$ and $j_{\zero},s_{\zero} \in I\setminus I^{\rm sh}$ such that $[i_{\zero}j_{\zero}s_{\zero}]^{(\infty)}>0$. Then, by Theorem \ref{main2} and \eqref{normbracket2} we directly get $$|\mu_{\gm}|_{g^{(n)}}^2 \geq [i_{\zero}j_{\zero}s_{\zero}]^{(n)}\frac{\l_{s_{\zero}}^{(n)}}{\l_{i_{\zero}}^{(n)}\l_{j_{\zero}}^{(n)}} \sim [i_{\zero}j_{\zero}s_{\zero}]^{(\infty)}\frac1{\l_{i_{\zero}}^{(n)}} \rar +\infty$$ and so the claim follows. \end{proof}

The next easy example shows that the finiteness hypothesis on the fundamental group $\pi_1(M)$ cannot be removed.

\begin{example} Let $M^3=S^1{\times}S^2 = \fG/\fH$, with $\fG\=\fU(1){\times}\fSU(2)$ and $\fH\=\{1\}{\times}\fU(1) \subset \fG$. Let us fix an $\Ad(\fG)$-invariant inner product $Q$ on $\gg=\Lie(\fG)$ and a $Q$-orthonormal basis $(E,X_1,X_2,X_3)$ for $\gg$ such that $$\begin{gathered} \gg=\gh+\gm_1+\gm_2 \,\, , \quad \gh=\vspan(X_1) \, , \,\, \gm_1=\vspan(E) \, , \,\, \gm_2=\vspan(X_2,X_3) \,\, , \\
[E,X_i]=0 \,\, , \quad [X_1,X_2]=-2X_3 \,\, , \quad [X_2,X_3]=-2X_1 \,\, , \quad [X_3,X_1]=-2X_2 \,\, . 
\end{gathered}$$ We consider now the sequence of metrics $g^{(n)} \= \frac1{n^2}Q_{\gm_1}+nQ_{\gm_2}$, together with the $g^{(n)}$-normalized frame $$E^{(n)}\=nE \,\, , \quad X_2^{(n)}\={\textstyle\frac1{\sqrt{n}}}X_2 \,\, , \quad X_3^{(n)}\={\textstyle\frac1{\sqrt{n}}}X_3 \,\, .$$ Then, one can directly check that the curvature operator $\Rm(g^{(n)}): \L^2\gm \rar \L^2\gm$ is diagonal and explicitly given by $$ \Rm(g^{(n)})(E^{(n)}{\wedge}X^{(n)}_2)=\Rm(g^{(n)})(E^{(n)}{\wedge}X^{(n)}_3)=0 \,\, , \quad \Rm(g^{(n)})(X^{(n)}_2{\wedge}X^{(n)}_3)={\textstyle\frac4n}X^{(n)}_2{\wedge}X^{(n)}_3 \,\, ,$$ while $$[E^{(n)},X^{(n)}_2]=[E^{(n)},X^{(n)}_3]=0 \,\, , \quad [X^{(n)}_2,X^{(n)}_3]=-{\textstyle\frac2n}X_1 \,\, .$$ So, the sequence $(g^{(n)})$ diverges with bounded curvature and it is algebraically non-collapsed. \end{example}

Finally, let us consider a sequence $(g^{(n)}) \subset \eM^{\fG}$ and, up to a normalization, for any $n \in \bN$ fix the scale of the most shrinking direction to be $1$. This is equivalent of saying that, with respect to a diagonal decomposition as \eqref{diag(n)} in the previous section, $\min\!\big\{\l_1^{(n)},{\dots},\l_{\ell}^{(n)}\big\}=1$ for any $n \in \bN$. In this case, we say that {\it $(g^{(n)})$ is normalized with respect to the most shrinking direction}. Notice that any such a sequence is divergent if and only if $\vol(g^{(n)}) \rar +\infty$.

\begin{prop} If $(g^{(n)}) \subset \eM^{\fG}$ is normalized with respect to the most shrinking direction and has bounded curvature, then it is algebraically non-collapsed. \label{LASTprop}\end{prop}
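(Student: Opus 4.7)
The plan is to estimate both pieces appearing in the definition of algebraic (non-)collapse, namely $|\mu_{\gh}|_{Q_{\gh}+g^{(n)}}^2$ and $|\mu_{\gm}|_{g^{(n)}}^2$, using only the normalization $\min_i\l_i^{(n)}=1$, the compactness of the space $\eF^{\fG}$, and the fact that a sequence with bounded sectional curvature has uniformly bounded scalar curvature. For each $n$, pick a good decomposition $\f^{(n)}\in\eF^{\fG}$ for $g^{(n)}$ so that $g^{(n)}=\sum_{i\in I}\l_i^{(n)}Q_{\gm_i^{(n)}}$ with $\l_i^{(n)}\geq 1$ for all $i\in I$.

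The $\gh$-valued piece is dispatched by a direct estimate: using the explicit expression from \eqref{normbracket2} together with $\l_i^{(n)}\geq 1$,
$$|\mu_{\gh}|_{Q_{\gh}+g^{(n)}}^2=\sum_{i\in I}\frac{d_ic_i^{(n)}}{\l_i^{(n)}}\leq\sum_{i\in I}d_ic_i^{(n)},$$
and the right-hand side is uniformly bounded since the map $\f\mapsto c_i(\f)$ is continuous on the compact space $\eF^{\fG}$.

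The $\gm$-valued piece requires a short algebraic trick. Because $[ijk]_{\f}$ is symmetric in its three entries, relabeling of dummy indices gives the identity $\sum_{i,j,k}[ijk]^{(n)}\tfrac{\l_k^{(n)}}{\l_i^{(n)}\l_j^{(n)}}=\sum_{i,j,k}[ijk]^{(n)}\tfrac{\l_i^{(n)}}{\l_j^{(n)}\l_k^{(n)}}$, and solving \eqref{scal} for the latter quantity yields
$$|\mu_{\gm}|_{g^{(n)}}^2=2\sum_{i\in I}\frac{d_ib_i^{(n)}}{\l_i^{(n)}}-4\,\scal(g^{(n)}).$$
By $\l_i^{(n)}\geq 1$ and \eqref{bG/H} the first sum is bounded above by $2\,b_{\fG/\fH}$, an invariant of $\fG/\fH$ that does not depend on $\f^{(n)}$. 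The bounded curvature hypothesis $|\sec(g^{(n)})|\leq C$ immediately gives $|\scal(g^{(n)})|\leq m(m-1)C$, so $|\mu_{\gm}|_{g^{(n)}}^2$ is uniformly bounded as well, and the proposition follows. The only non-routine point is spotting the identity linking $|\mu_{\gm}|_g^2$ to $\scal(g)$; once this observation is made, the argument requires no appeal to the asymptotic analysis developed in Section \ref{section4}.
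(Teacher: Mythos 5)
Your proof is correct, but it follows a genuinely different and more elementary route than the paper's. The paper argues by contradiction: if $(g^{(n)})$ were collapsed, then (since $\l_i^{(n)}\geq1$ kills the $\gh$-part) some term $[i_1i_2i_3]^{(n)}\l_{i_1}^{(n)}/(\l_{i_2}^{(n)}\l_{i_3}^{(n)})$ must blow up, and the boundedness of each Ricci eigenvalue $\ric_i(g^{(n)})$ in \eqref{eigrick} then forces another positive term to blow up; iterating produces chains of indices $(i_s),(j_s)$ whose existence contradicts the finiteness of $I$ via the asymmetric, transitive relation $\prec$ on $\{\l_1^{(n)},\dots,\l_\ell^{(n)}\}$. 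You instead observe that the total symmetry of $[ijk]_{\f}$ (a consequence of the $\Ad(\fG)$-invariance of $Q$) lets you rewrite $|\mu_{\gm}|_g^2=\sum_{i,j,k}[ijk]_{\f}\tfrac{\l_k}{\l_i\l_j}$ as the cubic term in the Wang--Ziller formula \eqref{scal}, giving the exact identity $|\mu_{\gm}|_g^2=2\sum_i d_ib_i/\l_i-4\scal(g)$, after which the normalization $\l_i^{(n)}\geq1$ and the bound $|\scal(g^{(n)})|\leq m(m-1)C$ finish the job. Both identities you invoke check out, and your handling of the $n$-dependence of the good decompositions is fine since $b_{\fG/\fH}$ and $\sum_i d_ic_i$ are $\f$-independent. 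What your approach buys is twofold: it is direct rather than by contradiction, and it only uses a lower bound on the scalar curvature rather than the boundedness of the full Ricci (hence sectional) curvature, so it actually proves a slightly stronger statement. What the paper's argument buys is the chain-construction technique itself, which localizes exactly which bracket coefficients are responsible for a hypothetical collapse; that information is not visible in your global trace identity.
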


\begin{proof} Let $(g^{(n)})$ be a divergent sequence of $\fG$-invariant metrics with bounded curvature and suppose that it is normalized with respect to the most shrinking direction. As in the proof of Proposition \ref{MAINcor2}, from now on we adopt the notation introduced at the beginning of Section \ref{section4}. By \eqref{ric}, the diagonal terms of the Ricci tensor along the sequence are given by \beq \ric_i(g^{(n)}) = \frac{b_i^{(n)}}{2\l_i^{(n)}}-\frac1{2d_i}\sum_{j,k \in I}[ijk]^{(n)}\frac{\l_k^{(n)}}{\l_i^{(n)}\l_j^{(n)}}+\frac1{4d_i}\sum_{j,k \in I}[ijk]^{(n)}\frac{\l_i^{(n)}}{\l_j^{(n)}\l_k^{(n)}} \,\, . \label{eigrick} \eeq Suppose by contradiction that $(g^{(n)})$ is algebraically collapsed. Since from our normalization $\l_i^{(n)}\geq 1$ for any $n\in \bN$, $1\leq i \leq \ell$, from \eqref{normbracket2} we get necessarily that $|\mu_{\gm}|_{g^{(n)}} \rar +\infty$. So, again by \eqref{normbracket2} there exists a triple $(i_1,i_2,i_3) \in I^3$ such that $[i_1i_2i_3]^{(n)}\frac{\l_{i_1}^{(n)}}{\l_{i_2}^{(n)}\l_{i_3}^{(n)}} \rar +\infty$. Since $\ric_{i_1}(g^{(n)})$ is bounded, by \eqref{eigrick} there exist $i_4, i_5 \in I$ such that $[i_1i_4i_5]^{(n)}\frac{\l_{i_4}^{(n)}}{\l_{i_1}^{(n)}\l_{i_5}^{(n)}} \rar +\infty$. By the way, $\ric_{i_4}(g^{(n)})$ is bounded too and then there exist $i_6, i_7 \in I$ such that $[i_4i_6i_7]^{(n)}\frac{\l_{i_6}^{(n)}}{\l_{i_4}^{(n)}\l_{i_7}^{(n)}} \rar +\infty$. Iterating this procedure, we obtain two sequences $(i_s), (j_s) \subset I$ such that $[i_sj_sj_{s+1}]^{(n)}\frac{\l_{j_{s+1}}^{(n)}}{\l_{i_s}^{(n)}\l_{j_s}^{(n)}} \rar +\infty$. Since $I=\{1,{\dots},\ell\}$ is finite and the relation defined on the set $\big\{\l_1^{(n)},{\dots},\l_{\ell}^{(n)}\big\}$ by $$a^{(n)} \prec b^{(n)} \,\iff\, \textstyle\frac{b^{(n)}}{a^{(n)}}\rar +\infty$$ is asymmetric and transitive, the sequences $(i_s)$ and $(j_s)$ are necessarily finite too, i.e. they are of the form $(i_1,{\dots},i_{s_{\zero}})$ and $(j_1,{\dots},j_{s_{\zero}})$, respectively. So, it follows that $\ric_{j_{s_{\zero}}}(g^{(n)}) \rar +\infty$ and this is absurd. \end{proof} 
\smallskip

\appendix

\section{} \label{appendixA} \setcounter{equation} 0

\subsection{Proof of Proposition \ref{propscaltorest}} \hfill \par

For convenience of the reader, we provide here a proof of Proposition \ref{propscaltorest} following B{\"o}hm's original approach. First, we need the following estimate.

\begin{prop} Let $\fG$ be a compact $N$-dimensional Lie group with a fixed $\Ad(\fG)$-invariant Euclidean inner product $Q$ on the Lie algebra $\gg \= \Lie(\fG)$, let $\ga \subset \gg$ be an abelian Lie subalgebra and let $\eB\=(e_1,{\dots},e_N)$ be a $Q$-orthonormal basis for $\gg$ such that $\ga=\vspan(e_1,{\dots},e_{q+1})$ for some $0\leq q \leq N-1$. Let also $\eB^{(n)}\=(e_1^{(n)},{\dots},e_N^{(n)})$ be a sequence of $Q$-orthonormal bases for $\gg$ such that $e_i^{(n)} \rar e_i$ as $n \rar +\infty$ for any $1 \leq i \leq N$. Then, there exist $\bar{n} \in \bN$ and $C>0$ such that \beq \sum_{i,j \leq q+1} Q\big([e_1^{(n)},e_i^{(n)}],e_j^{(n)}\big)^2 \leq C \sum_{\substack{i \leq q+1\\ k > q+1}}Q\big([e_1^{(n)},e_i^{(n)}],e_k^{(n)}\big)^2 \quad \text{ for any $n \geq \bar{n}$ } \, . \label{crucialest} \eeq \end{prop}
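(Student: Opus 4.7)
The plan rests on the geometric observation that $\operatorname{Im}(\ad(e_1)) \subseteq \ga^\perp$. Indeed, the $\Ad(\fG)$-invariance of $Q$ makes $\ad(e_1)$ skew-symmetric with respect to $Q$, so $\operatorname{Im}(\ad(e_1)) = (\ker\ad(e_1))^\perp$, and the abelianess of $\ga$ together with $e_1 \in \ga$ gives $\ga \subseteq \ker\ad(e_1)$. Thus $[e_1, Y]$ has zero $\ga$-component for every $Y \in \gg$, and the proposition should be viewed as the perturbative counterpart of this identity for the vectors $Y = e_i^{(n)}$ with $i \le q+1$.

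Setting $\delta_a^{(n)} := e_a^{(n)} - e_a$ and $\e_n := \max_a |\delta_a^{(n)}|_Q$, I would start from the expansion
\[
[e_1^{(n)}, e_i^{(n)}] = [e_1,\delta_i^{(n)}] + [\delta_1^{(n)}, e_i] + [\delta_1^{(n)}, \delta_i^{(n)}], \qquad i \le q+1,
\]
and check that both first-order terms have vanishing $\ga$-component: the first because it lies in $\operatorname{Im}(\ad(e_1)) \subseteq \ga^\perp$, and the second because for $l \le q+1$ the total antisymmetry of $Q([\cdot,\cdot],\cdot)$ gives $Q([\delta_1^{(n)}, e_i], e_l) = -Q(\delta_1^{(n)}, [e_i,e_l]) = 0$ by the abelianess of $\ga$. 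This produces $|\pi_\ga([e_1^{(n)}, e_i^{(n)}])|_Q = O(\e_n^2)$. Combining with $\|\pi_{\ga^{(n)}} - \pi_\ga\|_{\mathrm{op}} = O(\e_n)$ and $|[e_1^{(n)}, e_i^{(n)}]|_Q = O(\e_n)$ then yields $|u_i^{(n)}|_Q = O(\e_n^2)$ for $u_i^{(n)} := \pi_{\ga^{(n)}}[e_1^{(n)}, e_i^{(n)}]$, and hence the left-hand side of the claimed inequality is $O(\e_n^4)$.

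For the comparison with the right-hand side, I would prove the uniform estimate
\[
\big|\pi_{\ga^{(n)}}\big([e_1^{(n)}, Y]\big)\big|_Q \le \kappa_n \, \big|[e_1^{(n)}, Y]\big|_Q \qquad (Y \in \gg),
\]
with $\kappa_n \to 0$. This amounts to showing that $\operatorname{Im}(\ad(e_1^{(n)}))$ converges in the Grassmannian to a subspace of $\ga^\perp$, which follows from compactness of $\fG$: the dimension of $\ker\ad(e_1^{(n)})$ is bounded below by $\rank(\gg)$ and above by $\dim\ker\ad(e_1)$ (upper semicontinuity), so any perturbative splitting of the degenerate zero-eigenspace of $\ad(e_1)$ contributes only image directions close to $(\ker\ad(e_1))^\perp \subseteq \ga^\perp$. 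Applying this estimate to $Y = e_i^{(n)}$, squaring and summing produces $\mathrm{LHS} \le \kappa_n^2(\mathrm{LHS} + \mathrm{RHS})$ and hence $\mathrm{LHS} \le \tfrac{\kappa_n^2}{1-\kappa_n^2}\mathrm{RHS}$, which gives the conclusion with any $C>0$ once $n$ is large enough. The main obstacle is the Grassmannian convergence when $\dim\ker\ad(e_1^{(n)})$ strictly drops: one must verify that the new image directions created by the splitting still cluster near $\ga^\perp$, which I expect to handle by computing the first-order effective perturbation $\pi_{\ker\ad(e_1)} \circ \ad(\delta_1^{(n)}) \circ \pi_{\ker\ad(e_1)}$ on $\ker\ad(e_1)$ and invoking once more the abelianess of $\ga$ to show that its $\ga$-to-$\ga$ block vanishes, so the split eigendirections lie in $\ga^\perp$.
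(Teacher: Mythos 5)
Your first step is correct: expanding $[e_1^{(n)},e_i^{(n)}]$ and using $\operatorname{Im}(\ad(e_1))\subset\ga^{\perp}$ together with $Q([\delta_1^{(n)},e_i],e_l)=Q(\delta_1^{(n)},[e_i,e_l])=0$ does give $\big|\pi_{\ga^{(n)}}([e_1^{(n)},e_i^{(n)}])\big|_Q=O(\e_n^2)$, hence $\mathrm{LHS}=O(\e_n^4)$. But this alone proves nothing, because the right-hand side has no lower bound in terms of $\e_n$ (the brackets $[e_1^{(n)},e_i^{(n)}]$ may tend to zero at an arbitrarily fast rate compared with $\e_n$), so everything hinges on your second step. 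That step contains the genuine gap: the uniform estimate $\big|\pi_{\ga^{(n)}}([e_1^{(n)},Y])\big|_Q\leq\kappa_n\big|[e_1^{(n)},Y]\big|_Q$ for all $Y\in\gg$ with $\kappa_n\rar 0$ is false, and so is the assertion that $\operatorname{Im}(\ad(e_1^{(n)}))$ converges in the Grassmannian to a subspace of $\ga^{\perp}$. Take $\gg=\su(2)\oplus\su(2)\oplus\su(2)$ with standard bases $(X_i),(Y_i),(Z_i)$ of the three factors, $\ga=\vspan(X_1,Y_1,Z_1)$ a maximal torus (so $q=2$ and the statement is not vacuous), $e_1=X_1$, and $e_1^{(n)}=(X_1+t_nY_2)/\sqrt{1+t_n^2}$ with $t_n\rar 0$. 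Then $[e_1^{(n)},Y_3]$ is a nonzero multiple of $Y_1\in\ga$, so for $Y=Y_3$ the ratio in your estimate equals $1$ for every $n$, and $\operatorname{Im}(\ad(e_1^{(n)}))$ contains $Y_1$ for every $n$. The same example refutes the proposed repair in your last sentence: the effective first-order perturbation on $\ker\ad(e_1)$ is $\ad(Y_2)$ restricted to the second factor, its $\ga$-to-$\ga$ block does vanish, and yet its split eigenplane is $\vspan(Y_1,Y_3)$, which meets $\ga$. Vanishing of the $\ga{\times}\ga$ block of a skew-symmetric operator only says that it maps $\ga$ into $\ga^{\perp}$; it does not prevent its invariant $2$-planes from containing vectors of $\ga$.

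Restricting the estimate to the vectors $Y=e_i^{(n)}$ that actually occur does not rescue the argument either: for those $Y$ the desired aggregate inequality $\sum_i\big|\pi_{\ga^{(n)}}([e_1^{(n)},e_i^{(n)}])\big|_Q^2\leq\frac{C}{1+C}\sum_i\big|[e_1^{(n)},e_i^{(n)}]\big|_Q^2$ is just a reformulation of the proposition, so you would be assuming what is to be proved. The difficulty is precisely that the new image directions created by the splitting of $\ker\ad(e_1)$ can point into $\ga$, and ruling out that the brackets $[e_1^{(n)},e_i^{(n)}]$ concentrate along those directions requires a global Lie-theoretic input rather than a first-order perturbative one. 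The paper argues by contradiction: assuming $\mathrm{LHS}>c^{(n)}\,\mathrm{RHS}$ with $c^{(n)}\rar+\infty$, it conjugates $e_1^{(n)}$ into a fixed maximal torus $\gt$ and runs an induction over an orthonormal basis of $\gt$, producing renormalized limits $E_i^{(\infty)}\in\ga\cap\gt^{\perp}\setminus\{0\}$ together with $[\gt,\ga]=\{0\}$; then $\gt+\ga$ would be an abelian subalgebra strictly containing $\gt$, contradicting maximality. Some argument of this global (or all-orders iterative) nature is needed to close the gap.
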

\begin{proof} Of course \eqref{crucialest} holds true if $\gg$ is abelian or $q=0,1$. Hence, we assume that $1<q<N-1$ and that $\gg$ is not abelian. Let $I\=\{1,{\dots},N\}$, $I_1\=\{2,{\dots},q+1\}$ and $I_2\= \{q+2,{\dots},N\}$. We highlights here that we will pass whenever convenient to a subsequence without mentioning it explicitly. Moreover, for any subspace $\gp \subset \gg$, we denote by $\gp^{\perp}$ its $Q$-orthogonal complement inside $\gg$.

Let us suppose by contradiction that \beq \sum_{i,j \in I_1} Q\big([e_1^{(n)},e_i^{(n)}],e_j^{(n)}\big)^2 > c^{(n)} \sum_{\substack{i \in I_1\\ k \in I_2}}Q\big([e_1^{(n)},e_i^{(n)}],e_k^{(n)}\big)^2 \quad \text{ for any $n \in \bN$ } \, , \label{absurd} \eeq for some sequence $c^{(n)}\rar +\infty$.

Let also $\gt \subset \gg$ be a maximal abelian Lie subalgebra of $\gg$ such that $e_1 \in \gt$. We claim that it is possible to assume that $e_1^{(n)} \in \gt$ for any $n \in \bN$. In fact, we can choose a sequence $(\gt^{(n)})$ of maximal abelian subalgebras of $\gg$ such that $e_1^{(n)} \in \gt^{(n)}$ and $\gt^{(n)} \rar \gt$ as $n \rar +\infty$. But then, there exists a sequence $(x^{(n)}) \subset \fG$ such that $\Ad(x^{(n)})(\gt^{(n)})=\gt$ and $x^{(n)} \rar 1_{\fG}$. Therefore, by setting $e_i'{}^{(n)} \=\Ad(x^{(n)})(e_i^{(n)})$ for any $i \in I$, we obtain a new $Q$-orthonormal basis $\eB'{}^{(n)}$ which converges to $\eB$. 

For any $i \in I_1$ we write \beq \gt^{\perp} \ni [e^{(n)}_1,e^{(n)}_i]=\sum_{j \in I_1\setminus\{i\}}a_{ij}^{(n)}e^{(n)}_j+z^{(n)}_i \,\, , \quad \text{ with $z^{(n)}_i \in \vspan(e^{(n)}_{q+2},{\dots},e^{(n)}_N) $} \label{aij} \eeq and we choose $j(i) \in I_1\setminus\{i\}$ such that $|a_{ij(i)}^{(n)}| \geq |a_{ij}^{(n)}|$ for any $j \in I_1\setminus\{i\}$, for any $n \in \bN$. Moreover, up to reorder the index set $I_1$, we may assume that $|a_{23}^{(n)}|\geq|a_{ij(i)}^{(n)}|$. So, by means of \eqref{absurd} and \eqref{aij}, we get \beq \big|a_{23}^{(n)}\big|^2 \geq \frac1q\sum_{i \in I_1}\big|a_{ij(i)}^{(n)}\big|^2 > \frac{c^{(n)}}{q^2} \sum_{i \in I_1}|z_i^{(n)}|_Q^2 \quad \text{ for any $n \in \bN$ } \, . \label{absurd2} \eeq
We claim now that it is possible to assume that for any $i \in I_1$ \beq \lim_{n \rar +\infty}\frac{\big|a_{ij(i)}^{(n)}\big|}{\big|a_{23}^{(n)}\big|}>0 \,\, . \label{limij(i)}\eeq In fact, let $I'_1 \= \{i \in I_1 : \text{ $i$ satisfies } \eqref{limij(i)}\}$ and $I''_1 \= I_1 \setminus I'_1$. Of course $\{2,3\} \subset I'_1$. Then, by \eqref{absurd2} \begin{align}
(1+|I''_1|)\big|a_{23}^{(n)}\big|^2 &= \big|a_{23}^{(n)}\big|^2 + \sum_{i \in I''_1}\frac{\big|a_{23}^{(n)}\big|^2}{\big|a_{ij(i)}^{(n)}\big|^2}\big|a_{ij(i)}^{(n)}\big|^2 \nonumber \\
&> \frac{c^{(n)}}{q^2} \sum_{i \in I_1}|z_i^{(n)}|_Q^2+\frac1q\sum_{\substack{i \in I''_1 \\ j \in I_1\setminus\{i\}}}\frac{\big|a_{23}^{(n)}\big|^2}{\big|a_{ij(i)}^{(n)}\big|^2}\big|a_{ij}^{(n)}\big|^2 \quad , \label{I'1(1)} \\
&\geq \tilde{c}^{(n)}\sum_{\substack{i \in I'_1 \\ k \in I''_1 \cup I_2}}Q\big([e_1^{(n)},e_i^{(n)}],e_k^{(n)}\big)^2 \nonumber
\end{align} where $$\tilde{c}^{(n)} \= \min\Bigg\{\frac{c^{(n)}}{q^2}, \frac1q\min_{i \in I''_1}\Bigg\{\frac{\big|a_{23}^{(n)}\big|^2}{\big|a_{ij(i)}^{(n)}\big|^2}\Bigg\}\Bigg\} \rar +\infty \,\, .$$ On the other hand \beq \sum_{i \in I'_1}\big|a_{ij(i)}^{(n)}\big|^2 \sim C'\, \big|a_{23}^{(n)}\big|^2 \label{I'1(2)} \quad \text{ for some $C'>0$ }\eeq and so by \eqref{I'1(1)} and \eqref{I'1(2)} we directly get that $$\sum_{i,j \in I'_1} Q\big([e_1^{(n)},e_i^{(n)}],e_j^{(n)}\big)^2 > \hat{c}^{(n)} \sum_{\substack{i \in I'_1\\ k \in I''_1\cup I_2}}Q\big([e_1^{(n)},e_i^{(n)}],e_k^{(n)}\big)^2 \quad \text{ for any $n \in \bN$ }$$ for some sequence $\hat{c}^{(n)} \rar +\infty$.

So, from now on, we assume $I_1=I'_1$ and hence $\big|a_{ij(i)}^{(n)}\big|>0$ for any $n \in \bN$, $i \in I_1$. Let also $d\=\dim(\gt)$ be the rank of $\gg$.

We are going to prove by induction that there exists a $Q$-orthonormal basis $(e_{1,1},e_{1,2},{\dots},e_{1,d})$ for $\gt$ and a set of vectors $E^{(\infty)}_i \in \ga\setminus\{0\}$, $i \in I_1$, such that for any $s \in \{1,{\dots},d\}$ the following claim, which we denote by $\bar{P}(s)$, holds: there exist a sequence $(e^{(n)}_{1,s}) \subset \vspan(e_{1,s},{\dots},e_{1,d}) \subset \gt$, with $e^{(n)}_{1,s} \rar e_{1,s}$ and, for any $i \in I_1$, a sequence of real numbers $\hat{a}^{(n)}_{i,s} >0$, with $\hat{a}^{(n)}_{i,s}\rar0$, such that, if we set $$e^{(n)}_{i,s}\= \begin{cases}
e^{(n)}_i & \text{ if $s=1$ } \\
{\rm pr}_{\gc_{\gg}(e_{1,1})\cap{\dots}\cap \gc_{\gg}(e_{1,s-1})}(e^{(n)}_i) & \text{ if $s>1$ }
\end{cases} \,\, ,$$ then \beq \frac1{\hat{a}^{(n)}_{i,s}}[e^{(n)}_{1,s},e^{(n)}_{i,s}] \,\rar\, E^{(\infty)}_i \,\, , \quad e^{(n)}_{i,s} \rar e_i \quad \text{ as $n \rar +\infty$ \, , \,\, for any $i \in I_1$ } \,\, . \label{inductivehyp}\eeq

First, we consider the case $s=1$ and we set $$e_{1,1} \= e_1 \,\, , \quad e^{(n)}_{1,1} \= e^{(n)}_{1} \,\, , \quad \hat{a}_{i,1}^{(n)} \= a_{ij(i)}^{(n)} \quad \text{ for any $i \in I_1$ } \,\, .$$ Next, we define $$E^{(n)}_{i,1} \= \frac1{\hat{a}^{(n)}_{i,1}}\sum_{j \in I_1\setminus\{i\}}a_{ij}^{(n)}\,e^{(n)}_{j,1} \,\, , \quad Z^{(n)}_{i,1} \= \frac1{\hat{a}^{(n)}_{i,1}}\,z_i^{(n)}$$ in such a way that \beq \frac1{\hat{a}^{(n)}_{i,1}}[e^{(n)}_{1,1},e^{(n)}_{i,1}]= E^{(n)}_{i,1}+Z^{(n)}_{i,1} \quad \text{ for any $i \in I_1$ } \, . \label{EZ} \eeq
By \eqref{absurd2} and \eqref{limij(i)}, it follows that $$\sum_{i \in I_1}\big|Z^{(n)}_{i,1}\big|_Q^2 \leq \e^{(n)} \quad \text{ for some $\e^{(n)}\rar 0$ } \, ,$$ while, by construction, $E^{(\infty)}_i \= \lim_{n\rar+\infty}E^{(n)}_{i,1} \neq 0$ and $E^{(\infty)}_i \in \ga \cap \gt^{\perp}$. Hence, it follows that $\bar{P}(1)$ holds. Let us fix now $1\leq s \leq d-1$ and assume that $\bar{P}(s')$ holds true for any $1 \leq s' \leq s$. Notice that, by the inductive hypothesis, we get $[e_{1,s'},e_i]=0$ for any $1 \leq s' \leq s$, $i \in I_1$ and then $\ga \subset \gc_{\gg}(e_{1,1})\cap{\dots}\cap \gc_{\gg}(e_{1,s})$. Here, we denoted by $\gc_{\gg}(X)$ the centralizer of $X \in \gg$ in $\gg$.

We consider now the following $Q$-orthogonal decompositions: $$\begin{gathered}
e^{(n)}_{1,s} \= \a^{(n)}_se_{1,s}+\tilde{e}^{(n)}_{1,s+1} \,\, , \\
e^{(n)}_{i,s} \= T^{(n)}_i+V^{(n)}_{i,s+1}+W^{(n)}_{i,s+1} \,\, , \quad i \in I_1 \,\, ,\end{gathered}$$ with $\tilde{e}^{(n)}_{1,s+1}\in \gt$ and $T^{(n)}_i \in \gt$, $V^{(n)}_{i,s+1} \in \gc_{\gg}(e_{1,1}) \cap {\dots} \cap \gc_{\gg}(e_{1,s}) \cap \gt^{\perp}$, $W^{(n)}_{i,s+1} \in \big(\gc_{\gg}(e_{1,1}) \cap {\dots} \cap \gc_{\gg}(e_{1,s})\big)^{\perp}$. Then $$[e_{1,s}^{(n)},e_{i,s}^{(n)}]=[\tilde{e}^{(n)}_{1,s+1},V^{(n)}_{i,s+1}]+[e^{(n)}_{1,s},W^{(n)}_{i,s+1}] \,\, ,$$ with $[\tilde{e}^{(n)}_{1,s+1},V^{(n)}_{i,s+1}] \in \gc_{\gg}(e_{1,1}) \cap {\dots} \cap \gc_{\gg}(e_{1,s}) \cap \gt^{\perp}$ and $[e_1^{(n)},W^{(n)}_{i,s+1}] \in \big(\gc_{\gg}(e_{1,1}) \cap {\dots} \cap \gc_{\gg}(e_{1,s})\big)^{\perp}$. If we set $$\wt{E}^{(n)}_{i,s} \= \frac1{\hat{a}^{(n)}_{i,s}}[e^{(n)}_{1,s},e^{(n)}_{i,s}] \,\, ,$$ we get \beq [\tilde{e}^{(n)}_{1,s+1},V^{(n)}_{i,s+1}]=\hat{a}^{(n)}_{i,s} \,{\rm pr}_{\gc_{\gg}(e_{1,1})\cap{\dots}\gc_{\gg}(e_{1,s})}(\wt{E}^{(n)}_{i,s}) \eeq and hence, since ${\rm pr}_{\gc_{\gg}(e_{1,1})\cap{\dots}\gc_{\gg}(e_{1,s})}(\wt{E}^{(n)}_{i,s}) \rar E^{(\infty)}_i \neq 0$ as $n \rar +\infty$, we deduce that $\tilde{e}^{(n)}_{1,s+1} \neq 0$. Next, we set $$e^{(n)}_{1,s+1} \= \frac{\tilde{e}^{(n)}_{1,s+1}}{|\tilde{e}^{(n)}_{1,s+1}|_Q} \,\, , \quad e_{1,s+1} \= \lim_{n\rar+\infty}e^{(n)}_{1,s+1} \,\, , \quad \hat{a}^{(n)}_{i,s+1}\=\frac{\hat{a}^{(n)}_{i,s}}{|\tilde{e}^{(n)}_{1,s+1}|_Q} \,\, .$$ Since $e^{(n)}_{i,s+1}=T^{(n)}_i+V^{(n)}_{i,s+1}$, it follows that $$\frac1{\hat{a}^{(n)}_{i,s+1}}[e^{(n)}_{1,s+1},e^{(n)}_{i,s+1}]={\rm pr}_{\gc_{\gg}(e_{1,1})\cap{\dots}\gc_{\gg}(e_{1,s})}(\wt{E}^{(n)}_{i,s}) = E^{(n)}_{i,s+1} + Z^{(n)}_{i,s+1} \,\, ,$$ where $$\begin{gathered} E^{(n)}_{i,s+1}\={\rm pr}_{\vspan(e^{(n)}_{2,s+1},{\dots},e^{(n)}_{q+1,s+1})}\big({\rm pr}_{\gc_{\gg}(e_{1,1})\cap{\dots}\gc_{\gg}(e_{1,s})}(\wt{E}^{(n)}_{i,s})\big) \,\, , \\ Z^{(n)}_{i,s+1}\={\rm pr}_{(\vspan(e^{(n)}_{2,s+1},{\dots},e^{(n)}_{q+1,s+1}))^{\perp}}\big({\rm pr}_{\gc_{\gg}(e_{1,1})\cap{\dots}\gc_{\gg}(e_{1,s})}(\wt{E}^{(n)}_{i,s})\big) \,\, . \end{gathered}$$ Since by inductive hypothesis $\ga \subset \gc_{\gg}(e_{1,1})\cap{\dots}\cap \gc_{\gg}(e_{1,s})$, it follows that $e^{(n)}_{i,s+1} \rar e_i$ for any $i \in I_1$ and hence $$E^{(n)}_{i,s+1} \rar E^{(\infty)}_i \,\, , \quad Z^{(n)}_{i,s+1} \rar 0 \quad \text{ as $n\rar+\infty$ } \,\, .$$ Since $[e_{1,s+1},e_i]=\hat{a}^{(\infty)}_{i,s+1}E^{(\infty)}_i$, with $\hat{a}^{(\infty)}_{i,s+1}\=\lim_{n\rar+\infty}\hat{a}^{(n)}_{i,s+1}$, and $e_i,E^{(\infty)}_i \in \ga$, it follows that $\hat{a}^{(\infty)}_{i,s+1}=0$. This proves that $\bar{P}(s{+}1)$ holds and hence, by induction that $\bar{P}(s)$ holds for any $1\leq s \leq d$.

By \eqref{inductivehyp}, it follows that $$[e_{1,s},e_i]=0 \,\, , \quad E^{(\infty)}_i \in \ga \cap \gt^{\perp} \quad \text{ for any $i \in I_1$,\, $1 \leq s \leq d$ } \, , \quad $$ and hence $[\gt,\ga]=\{0\}$, $\ga\cap\gt^{\perp}\neq\{0\}$. Therefore, $\gt+\ga$ is an abelian Lie subalgebra of $\gg$ and $\gt \subsetneq \gt+\ga$, which is clearly absurd since $\gt$ is maximal by assumption. \end{proof}

\begin{proof}[Proof of Proposition \ref{propscaltorest}] From now until the end of the proof, we adopt the notation introduced at the beginning of Section \ref{section4}. Assume that $v^{(\infty)} \in \eS^{\S}(\gk_1,{\dots},\gk_p)$ and that $\gk_q$ is toral for some $1\leq q \leq p$. From \eqref{scal} it follows directly that \begin{align*}
\scal\big(g^{(n)}\big) \!&= \frac12\sum_{i \in I}d_ib_i^{(n)}e^{-t^{(n)}v_i^{(n)}}-\frac14\sum_{i,j,k \in I}[ijk]^{(n)}e^{t^{(n)}(v_i^{(n)}-v_j^{(n)}-v_k^{(n)})} \\
&= \frac12\sum_{i\leq r(q)} e^{-t^{(n)}v_i^{(n)}}\Bigg\{\sum_{j,k \leq r(q)} [ijk]^{(n)}\Big(1-\frac12e^{t^{(n)}(v_j^{(n)}-v_k^{(n)})}\Big)+\sum_{\substack{j\leq r(q) \\ k > r(q)}} [ijk]^{(n)}\Big(2-\frac12e^{t^{(n)}(v_k^{(n)}-v_j^{(n)})}\Big)- \\
&\phantom{aaaaaa}-\sum_{j,k>r(q)}[ijk]^{(n)}\Big(\frac12e^{t^{(n)}(v_j^{(n)}-v_k^{(n)})}+\frac12e^{t^{(n)}(v_k^{(n)}-v_j^{(n)})}-1\Big)-\sum_{\substack{j\leq r(q) \\ k > r(q)}} [ijk]^{(n)}e^{t^{(n)}(v_j^{(n)}-v_k^{(n)})}- \\
&\phantom{aaaaaa}-\frac12\sum_{j,k>r(q)}[ijk]^{(n)}e^{t^{(n)}(2v_i^{(n)}-v_j^{(n)}-v_k^{(n)})}\Bigg\}+\frac12\sum_{i>r(q)}d_ib_i^{(n)}e^{-t^{(n)}v_i^{(n)}}- \\
&\phantom{aaaaaa}-\frac14\sum_{i,j,k >r(q)}[ijk]^{(n)}e^{t^{(n)}(v_i^{(n)}-v_j^{(n)}-v_k^{(n)})} \,\, .
\end{align*} Since $\gk_q$ is toral, it splits as $\gk_q = \gh + \ga$, with $[\gh,\ga]=[\ga,\ga]=\{0\}$ and $\ga \neq \{0\}$. Hence, from \eqref{crucialest}, it follows that there exist $\bar{n} \in \bN$ and a constant $C>0$ such that \beq \sum_{j,k \leq r(q)} [ijk]^{(n)} \leq C \sum_{\substack{j\leq r(q)\\k>r(q)}}[ijk]^{(n)} \quad \text { for any $n \geq \bar{n}$ , \, $1 \leq i \leq r(q)$ } \, .\eeq We can also assume that there exists $\e>0$ such that $v_k^{(n)}-v_j^{(n)}>\e$ for any $j \leq r(q)$, $k>r(q)$ and $n \geq \bar{n}$. Then \begin{align*}
\sum_{j,k \leq r(q)} [ijk]^{(n)}\Big(1-\frac12e^{t^{(n)}(v_j^{(n)}-v_k^{(n)})}\Big)+\sum_{\substack{j\leq r(q) \\ k > r(q)}} [ijk]^{(n)}\Big(2-&\frac12e^{t^{(n)}(v_k^{(n)}-v_j^{(n)})}\Big) \leq \\
&\leq \sum_{j,s \leq r(q)} [ijk]^{(n)}+\sum_{\substack{j\leq r(q) \\ k > r(q)}} [ijk]^{(n)}\Big(2-\frac12e^{t^{(n)}\e}\Big) \\
&\leq -\frac12\sum_{\substack{j\leq r(q) \\ k > r(q)}} [ijk]^{(n)}\Big(e^{t^{(n)}\e}-\tilde{C}\Big)
\end{align*} with $\tilde{C}\=2C+4$. Since $\frac12e^{t^{(n)}(v_j^{(n)}-v_k^{(n)})}+\frac12e^{t^{(n)}(v_k^{(n)}-v_j^{(n)})}\geq 1$, the claim follows. \end{proof}

\subsection{An explicit example on $V_3(\bR^5)$, part II} \hfill \par
We compute here the expression of the full curvature operator along the sequence $(g^{(n)})$ of unit volume invariant metrics on the Stiefel manifold $V_3(\bR^5)=\fSO(5)/\fSO(2)$ that we studied in Section \ref{section4}. Let us consider the $g^{(n)}$-orthonormal frame $$\begin{gathered} X_1^{(n)}\=2n^2X_1 \,\, , \quad X_2^{(n)}\=X_2 \,\, , \quad X_3^{(n)}\=X_3 \,\, , \quad X_4^{(n)}\=X_4 \,\, , \quad X_5^{(n)}\=X_5 \,\, , \\ X_6^{(n)}\=\textstyle\frac1{\sqrt{n}}X_6 \,\, , \quad X_7^{(n)}\=\frac1{\sqrt{n}}X_7 \,\, , \quad X_8^{(n)}\=\frac1{\sqrt{2n}}X_8 \,\, , \quad X_9^{(n)}\=\frac1{\sqrt{2n}}X_9 \,\, . \end{gathered}$$ Then, the curvature operator $\Rm(g^{(n)}): \L^2\gm \rar \L^2\gm$ takes the following form. $$\begin{array}{l}
\Rm(g^{(n)})(X_1^{(n)}{\wedge}X_2^{(n)})=\frac1{16n^4}X_1^{(n)}{\wedge}X_2^{(n)}+\frac{3n-1}{16\sqrt2 n^4}X_6^{(n)}{\wedge}X_9^{(n)} \\
\Rm(g^{(n)})(X_1^{(n)}{\wedge}X_3^{(n)})=\frac1{16n^4}X_1^{(n)}{\wedge}X_3^{(n)}+\frac{3n-1}{16\sqrt2 n^4}X_7^{(n)}{\wedge}X_9^{(n)} \\
\Rm(g^{(n)})(X_1^{(n)}{\wedge}X_4^{(n)})=\frac1{16n^4}X_1^{(n)}{\wedge}X_4^{(n)}-\frac{3n-1}{16\sqrt2 n^4}X_6^{(n)}{\wedge}X_8^{(n)} \\
\Rm(g^{(n)})(X_1^{(n)}{\wedge}X_5^{(n)})=\frac1{16n^4}X_1^{(n)}{\wedge}X_5^{(n)}-\frac{3n-1}{16\sqrt2 n^4}X_7^{(n)}{\wedge}X_8^{(n)} \\
\Rm(g^{(n)})(X_1^{(n)}{\wedge}X_6^{(n)})=\frac{2n^2+n-1}{16\sqrt2 n^4}X_2^{(n)}{\wedge}X_9^{(n)}-\frac{2n^2+n-1}{16\sqrt2 n^4}X_4^{(n)}{\wedge}X_8^{(n)} \\
\Rm(g^{(n)})(X_1^{(n)}{\wedge}X_7^{(n)})=\frac{2n^2+n-1}{16\sqrt2 n^4}X_3^{(n)}{\wedge}X_9^{(n)}-\frac{2n^2+n-1}{16\sqrt2 n^4}X_5^{(n)}{\wedge}X_8^{(n)} \\
\Rm(g^{(n)})(X_1^{(n)}{\wedge}X_8^{(n)})=\frac1{64n^6}X_1^{(n)}{\wedge}X_8^{(n)}-\frac{n-1}{8\sqrt2 n^3}X_4^{(n)}{\wedge}X_6^{(n)}-\frac{n-1}{8\sqrt2 n^3}X_5^{(n)}{\wedge}X_7^{(n)} \\
\Rm(g^{(n)})(X_1^{(n)}{\wedge}X_9^{(n)})=\frac1{64n^6}X_1^{(n)}{\wedge}X_9^{(n)}+\frac{n-1}{8\sqrt2 n^3}X_2^{(n)}{\wedge}X_6^{(n)}+\frac{n-1}{8\sqrt2 n^3}X_3^{(n)}{\wedge}X_7^{(n)} \\
\Rm(g^{(n)})(X_2^{(n)}{\wedge}X_3^{(n)})=X_2^{(n)}{\wedge}X_3^{(n)}+\frac{16n^4-1}{16n^4}X_4^{(n)}{\wedge}X_5^{(n)}-\frac{n^2-6n+1}{8n^2}X_6^{(n)}{\wedge}X_7^{(n)} \\
\Rm(g^{(n)})(X_2^{(n)}{\wedge}X_4^{(n)})=\frac{16n^4-3}{16n^4}X_2^{(n)}{\wedge}X_4^{(n)}+\frac{8n^4-1}{8n^4}X_3^{(n)}{\wedge}X_5^{(n)}-\frac{2n^5-12n^4+2n^3+1}{16n^5}X_8^{(n)}{\wedge}X_9^{(n)} \\
\Rm(g^{(n)})(X_2^{(n)}{\wedge}X_5^{(n)})=-\frac1{16n^4}X_3^{(n)}{\wedge}X_4^{(n)} \\
\Rm(g^{(n)})(X_2^{(n)}{\wedge}X_6^{(n)})=\frac{n-1}{8\sqrt2 n^3}X_1^{(n)}{\wedge}X_9^{(n)}-\frac{7n^2-2n-1}{8n^2}X_2^{(n)}{\wedge}X_6^{(n)}-\frac{n-1}{2n}X_3^{(n)}{\wedge}X_7^{(n)} \\
\Rm(g^{(n)})(X_2^{(n)}{\wedge}X_7^{(n)})=-\frac{(n+1)(3n-1)}{8n^2}X_3^{(n)}{\wedge}X_6^{(n)} \\
\Rm(g^{(n)})(X_2^{(n)}{\wedge}X_8^{(n)})=\frac{5n^2-2n+1}{8n^2}X_2^{(n)}{\wedge}X_8^{(n)}+\frac{8n^5+8n^4-1}{32n^5}X_4^{(n)}{\wedge}X_9^{(n)} \\
\Rm(g^{(n)})(X_2^{(n)}{\wedge}X_9^{(n)})=\frac{(n+1)(2n-1)}{16\sqrt2 n^4}X_1^{(n)}{\wedge}X_6^{(n)}+\frac{12n^5-16n^4+43+1}{32n^5}X_4^{(n)}{\wedge}X_8^{(n)} \\
\Rm(g^{(n)})(X_3^{(n)}{\wedge}X_4^{(n)})=-\frac1{16n^4}X_2^{(n)}{\wedge}X_5^{(n)} \\
\Rm(g^{(n)})(X_3^{(n)}{\wedge}X_5^{(n)})=-\frac{8n^4-1}{8n^4}X_2^{(n)}{\wedge}X_4^{(n)}+\frac{16n^4-3}{16n^4}X_3^{(n)}{\wedge}X_5^{(n)}-\frac{2n^5+2n^3-12n^4+1}{16n^5}X_8^{(n)}{\wedge}X_9^{(n)} \\
\Rm(g^{(n)})(X_3^{(n)}{\wedge}X_6^{(n)})=-\frac{(n+1)(3n-1)}{8n^2}X_2^{(n)}{\wedge}X_7^{(n)} \\
\Rm(g^{(n)})(X_3^{(n)}{\wedge}X_7^{(n)})=\frac{n+1}{8\sqrt2n^3}X_1^{(n)}{\wedge}X_9^{(n)}-\frac{n-1}{2n}X_2^{(n)}{\wedge}X_6^{(n)}-\frac{7n^2-2n-1}{8n^2}X_3^{(n)}{\wedge}X_7^{(n)} \\
\Rm(g^{(n)})(X_3^{(n)}{\wedge}X_8^{(n)})=\frac{5n^2-2n+1}{8n^2}X_3^{(n)}{\wedge}X_8^{(n)}+\frac{8n^5+8n^4-1}{32n^5}X_5^{(n)}{\wedge}X_9^{(n)} \\
\Rm(g^{(n)})(X_3^{(n)}{\wedge}X_9^{(n)})=\frac{(n+1)(2n-1)}{16\sqrt2n^4}X_1^{(n)}{\wedge}X_7^{(n)}+\frac{12n^5-16n^4+4n^3+1}{32n^5}X_5^{(n)}{\wedge}X_8^{(n)} \\
\Rm(g^{(n)})(X_4^{(n)}{\wedge}X_5^{(n)})=\frac{16n^4-1}{16n^4}X_2^{(n)}{\wedge}X_3^{(n)}+X_4^{(n)}{\wedge}X_5^{(n)}-\frac{n^2-6n+1}{8n^2}X_6^{(n)}{\wedge}X_7^{(n)} \\
\Rm(g^{(n)})(X_4^{(n)}{\wedge}X_6^{(n)})=-\frac{n-1}{8\sqrt2n^3}X_1^{(n)}{\wedge}X_8^{(n)}-\frac{7n^2-2n-1}{8n^2}X_4^{(n)}{\wedge}X_6^{(n)}-\frac{n-1}{2n}X_5^{(n)}{\wedge}X_7^{(n)} \\
\Rm(g^{(n)})(X_4^{(n)}{\wedge}X_7^{(n)})=-\frac{(n+1)(3n-1)}{8n^2}X_5^{(n)}{\wedge}X_6^{(n)} \\
\Rm(g^{(n)})(X_4^{(n)}{\wedge}X_8^{(n)})=-\frac{(n+1)(2n-1)}{16\sqrt2n^4}X_1^{(n)}{\wedge}X_6^{(n)}+\frac{12n^5-16n^4+4n^3+1}{32n^5}X_2^{(n)}{\wedge}X_9^{(n)} \\
\Rm(g^{(n)})(X_4^{(n)}{\wedge}X_9^{(n)})=-\frac{8n^5+8n^4-1}{32n^5}X_2^{(n)}{\wedge}X_8^{(n)}+\frac{5n^2-2n+1}{8n^2}X_4^{(n)}{\wedge}X_9^{(n)} \\
\Rm(g^{(n)})(X_5^{(n)}{\wedge}X_6^{(n)})=-\frac{(n+1)(3n-1)}{8n^2}X_4^{(n)}{\wedge}X_7^{(n)} \\
\Rm(g^{(n)})(X_5^{(n)}{\wedge}X_7^{(n)})=-\frac{n-1}{8\sqrt2n^3}X_1^{(n)}{\wedge}X_8^{(n)}-\frac{n-1}{2n}X_4^{(n)}{\wedge}X_6^{(n)}-\frac{7n^2-2n-1}{8n^2}X_5^{(n)}{\wedge}X_7^{(n)} \\
\Rm(g^{(n)})(X_5^{(n)}{\wedge}X_8^{(n)})=-\frac{(n+1)(2n-1)}{16\sqrt2n^4}X_1^{(n)}{\wedge}X_7^{(n)}+\frac{12n^5-16n^4+4n^3+1}{32n^5}X_3^{(n)}{\wedge}X_9^{(n)} \\
\Rm(g^{(n)})(X_5^{(n)}{\wedge}X_9^{(n)})=\frac{8n^5+8n^4-1}{32n^5}X_3^{(n)}{\wedge}X_8^{(n)}+\frac{5n^2-2n+1}{8n^2}X_5^{(n)}{\wedge}X_9^{(n)} \\
\Rm(g^{(n)})(X_6^{(n)}{\wedge}X_7^{(n)})=-\frac{n^2-6n+1}{8n^2}X_2^{(n)}{\wedge}X_3^{(n)}-\frac{n^2-6n+1}{8n^2}X_4^{(n)}{\wedge}X_5^{(n)}-\frac1{n}X_6^{(n)}{\wedge}X_7^{(n)} \\
\Rm(g^{(n)})(X_6^{(n)}{\wedge}X_8^{(n)})=-\frac{3n-1}{16\sqrt2n^4}X_1^{(n)}{\wedge}X_4^{(n)}+\frac{n^2+6n-3}{8n^2}X_6^{(n)}{\wedge}X_8^{(n)} \\
\Rm(g^{(n)})(X_6^{(n)}{\wedge}X_9^{(n)})=\frac{3n-1}{16\sqrt2n^4}X_1^{(n)}{\wedge}X_2^{(n)}+\frac{n^2+6n-3}{8n^2}X_6^{(n)}{\wedge}X_9^{(n)} \\
\Rm(g^{(n)})(X_7^{(n)}{\wedge}X_8^{(n)})=-\frac{3n-1}{16\sqrt2n^4}X_1^{(n)}{\wedge}X_5^{(n)}+\frac{n^2+6n-3}{8n^2}X_7^{(n)}{\wedge}X_8^{(n)} \\
\Rm(g^{(n)})(X_7^{(n)}{\wedge}X_9^{(n)})=\frac{3n-1}{16\sqrt2n^4}X_1^{(n)}{\wedge}X_3^{(n)}+\frac{n^2+6n-3}{8n^2}X_7^{(n)}{\wedge}X_9^{(n)} \\
\Rm(g^{(n)})(X_8^{(n)}{\wedge}X_9^{(n)})=-\frac{2n^5-12n^4+2n^3+1}{16n^5}X_2^{(n)}{\wedge}X_4^{(n)}-\frac{2n^5-12n^4+2n^3+1}{16n^5}X_3^{(n)}{\wedge}X_5^{(n)}+\frac{32n^5-3}{64n^6}X_8^{(n)}{\wedge}X_9^{(n)}
\end{array}$$

\bigskip\bigskip
\font\smallsmc = cmcsc8
\font\smalltt = cmtt8
\font\smallit = cmti8
\hbox{\parindent=0pt\parskip=0pt
\vbox{\baselineskip 9.5 pt \hsize=5truein
\obeylines
{\smallsmc
Dipartimento di Matematica e Informatica ``Ulisse Dini'', Universit$\scalefont{0.55}{\text{\Aac}}$ di Firenze
Viale Morgagni 67/A, 50134 Firenze, ITALY}
\smallskip
{\smallit E-mail adress}\/: {\smalltt francesco.pediconi@unifi.it
}
}
}

\end{document}